\providecommand{\keywords}[1]
{
  \small	
  \textbf{\textit{Keywords---}} #1
}
\newcommand{\dist}{\textrm{d}}
\newcommand{\odd}{\textrm{odd}}
\newcommand{\even}{\textrm{even}}
\newcommand{\Perm}{\textrm{Perm}}
\newcommand{\argmax}{\textrm{argmax}}
\title{ 
On the Wasserstein Distance Between $k$-Step Probability Measures on Finite Graphs}
\author{Sophia Benjamin\footnote{North Carolina School of Science and Mathematics, Durham, NC. \, \, \,\,\,\,\,\,\,\,\,\,\,\,\,\,\,\,\,\,,\,\,\,\,\,\,\,\,\,\,\,\,\,\,\,\,\,\,\,\,\,\,\,\,\,\,\,\,\,\,\,\, Email: sophia.r.benjamin@gmail.com},  
Arushi Mantri\footnote{Jesuit High School, Portland, OR. Email: arushi.mantri@gmail.com}, Quinn Perian\footnote{Stanford Online High School, Palo Alto, CA. Email: quinn.perian@outlook.com}}
\date{October 19, 2021}
\theoremstyle{plain}
\newtheorem{theorem}{Theorem}[section]
\newtheorem{corollary}[theorem]{Corollary}
\newtheorem{lemma}[theorem]{Lemma}
\newtheorem{remark}[theorem]{Remark}
\theoremstyle{definition}
\newtheorem{definition}[theorem]{Definition}
\newtheorem{example}[theorem]{Example}
\begin{document}
\maketitle
\begin{abstract}
   We consider random walks $X,Y$ on a finite graph $G$ with respective lazinesses $\alpha, \beta \in [0,1]$. Let $\mu_k$ and $\nu_k$ be the $k$-step transition probability measures of $X$ and $Y$. In this paper, we study the Wasserstein distance between $\mu_k$ and $\nu_k$ for general $k$. We consider the sequence formed by the Wasserstein distance at odd values of $k$ and the sequence formed by the Wasserstein distance at even values of $k$. We first establish that these sequences always converge, and then we characterize the possible values for the sequences to converge to. We further show that each of these sequences is either eventually constant or converges at an exponential rate. By analyzing the cases of different convergence values separately, we are able to partially characterize when the Wasserstein distance is constant for sufficiently large $k$.
\end{abstract}

\keywords{Wasserstein distance, transportation plan, Guvab, random walk, $k$-step probability distribution, laziness, convergence, finite graph}

\section{Introduction}

Optimal transport theory concerns the minimum cost, called the \textit{transportation distance}, of moving mass from one configuration to another. In this paper, the notion of transportation distance that we are concerned with is the $L^1$ transportation distance, which we refer to as the \textit{Wasserstein distance}. The Wasserstein distance has applications in fields such as image processing, where a goal is to efficiently transform one image into another (e.g., \cite{rubner2000earth}), and machine learning, where a goal is to minimize some transport-related cost (e.g., \cite{frogner2015learning}). 

The application of Wasserstein distance that motivates this paper is the definition of \textit{$\alpha$-Ricci curvature} $\kappa_\alpha$ on graphs introduced by Lin, Lu, and Yau in \cite{lin2011ricci}:
$$\kappa_\alpha = 1 - \frac{W(m_x^\alpha,m_y^\alpha)}{\dist(x,y)}.$$ 
Here $\dist(x,y)$ is the graph distance between vertices $x$ and $y$, while $m_v^\alpha$ is the 1-step transition probability measure of a random walk starting at vertex $v$ with laziness $\alpha$, and $W(m_x^\alpha,m_y^\alpha)$ is the Wasserstein distance between $m_x^\alpha$ and $m_y^\alpha$.

The $\alpha$-Ricci curvature is a generalization of classical Ricci curvature, an object from Riemannian geometry that captures how volumes change as they flow along geodesics (\cite{ollivier2011visual}). 
In \cite{ollivier2009ricci}, Ollivier created the Ollivier-Ricci curvature to generalize the idea of Ricci curvature to discrete spaces, such as graphs. The Ollivier-Ricci curvature between $X$ and $Y$ is defined via the Wasserstein distance between the $1$-step transition probability measures of random walks starting at $X$ and $Y$. It captures roughly whether the neighborhoods of $X$ and $Y$ are closer together than $X$ and $Y$ themselves. The Ollivier-Ricci curvature is well-studied in geometry and graph theory (\cite{jiradilok2021transportation}, \cite{CushingKamtue+2019+22+44}, \cite{bourne2018ollivier}, \cite{cushing2020rigidity}, \cite{van2021ollivier}), and is also used to study economic risk, cancer networks, and drug design, among other applications (\cite{sandhu2015graph}, \cite{sandhu2016ricci}, \cite{sia2019ollivier}, \cite{wang2016interference}, \cite{wee2021ollivier}, \cite{jiradilok2021transportation}). Lin, Lu, and Yau further generalized the Ollivier-Ricci curvature to $\alpha$-Ricci curvature (\cite{lin2011ricci}), allowing for the laziness $\alpha$ of the random walks considered to be greater than zero.

In \cite{ollivier2009ricci}, Ollivier suggested exploring Ollivier-Ricci curvature on graphs at ``larger and larger scales." Thus, in this paper, we study the Wasserstein distance between $k$-step probability measures of random walks with potentially nonzero laziness as $k$ gets larger and larger. Since $1$-step probability distributions of random walks were used to study the initial ``small-scale" $\alpha$-Ricci curvature, these $k$-step probability distributions are a natural way to understand curvature at ``larger and larger scales." Jiradilok and Kamtue (\cite{jiradilok2021transportation}) study these $k$-step distributions for larger and larger $k$ on infinite regular trees; in this paper, we study them instead on finite graphs.

Given a finite, connected, simple graph, we consider a random walk with starting vertex $w$ and laziness $\alpha$. The random walk is defined to be a Markov chain where at each step, we either stay at the current vertex with probability $\alpha$ or pick a neighboring vertex at random and move there. We then consider the probability distribution encoding the likelihood of being at each possible vertex after $k$ steps of this random walk, which is called a $k$-step probability distribution, or $k$-step probability measure. 

Given two such random walks on one graph, starting at vertices $u,v$ and with respective lazinesses $\alpha,\beta$, we define the Wasserstein distance between their two $k$-step probability measures to be the minimum cost of moving between the two distributions. Here, moving 1 unit of mass across 1 edge costs 1 unit. 

We can ask many questions about the Wasserstein distance at ``larger and larger scales." For instance, does the Wasserstein distance between the two $k$-step probability distributions always converge as $k \to \infty$? Also, what does it converge to in different cases? Even more interestingly, what can we say about the rate of convergence? In particular, when does the distance eventually remain constant, and how long could it take to reach constancy?

In this paper, we show in all cases that either the Wasserstein distance converges or the Wasserstein distance at every other step converges. We also classify what the distance converges to in all cases, addressing the first and second questions.

We then seek to understand the rate of convergence of the Wasserstein distance. We reach two main results. First, addressing the third question, we show that unless the Wasserstein distance at every other step is eventually constant, its rate of convergence is exponential (Theorem \ref{thm: Guvab Convergence Theorem}). We also address the fourth question by providing a partial characterization of exactly when the Wasserstein distance is eventually constant (Theorem \ref{thm: Characterization of Constancy}). 

In Section 2, we provide formal definitions of key concepts used throughout the paper. In particular, we recall the definition of the Wasserstein distance and introduce the notion of a Guvab. A \textit{Guvab} refers to a pair of random walks on a finite connected simple graph, and these Guvabs are the primary object we study in this paper. In Section 3, we classify for all possible Guvabs the limiting behavior of the Wasserstein distance, when the distance converges, and what the distance converges to. This characterization provides a natural way to classify the Guvabs into four categories based on their limiting behavior: $W=1$; $W=0$; $W=\frac{1}{2}$; and $\beta = 1$. In each of Sections 4, 5, 6, and 7, we consider one of these four categories of Guvabs and determine when the Wasserstein distance is eventually constant as well as examine the rate of convergence if the Wasserstein distance is not constant. Along the way, we encounter various interesting results about the different cases. Finally, in Section 8, we present main results about constancy and rate of convergence in general, obtained by considering each of these four cases individually.

\section{Preliminaries}

We begin with several formal definitions that we use in the remainder of the paper. We start by recalling graph theory terminology and the definition of Wasserstein distance on graphs. Then, we review random walks on graphs and define Guvabs. Finally, we briefly discuss terminology used to describe convergence.

In this paper, all graphs we consider are finite, connected, simple graphs. For a graph $G$, let $V(G)$ be the vertex set of $G$ and $E(G)$ be the edge set of $G$, i.e., the set of unordered pairs $\{v_1,v_2\}$ where $v_1,v_2$ are adjacent vertices in $G$. Further, for any $v\in V(G)$, let $N(v)$ be the neighbor set of $v$. Finally, denote by $\dist(w_1,w_2)$ the graph distance between vertices $w_1$ and $w_2$.

\begin{definition}
Define a \textbf{distribution} on the graph $G$ to be a function $\mu:V(G)\to \mathbb{R}$. We say $\mu$ is a \textbf{nonnegative distribution} if, for all $v\in V(G)$, we have $\mu(v)\geq 0$. A nonnegative distribution $\mu$ is a \textbf{probability distribution} if $\sum_{w \in V(G)} \mu(w) = 1$. 
\end{definition}

For convenience, we will denote by $\Tilde{\textbf{0}}$ the distribution with value $0$ at all vertices, (i.e., for all $v\in V(G)$, we have $\Tilde{\textbf{0}}(v) = 0$). In addition, we will refer to a distribution $\mu$ for which $\sum_{w \in V(G)} \mu(w) = 0$ as a \textbf{zero-sum distribution}.

Given a graph $G$, let $\{\mu_i\}_{i=0}^\infty$ be an infinite sequence of distributions. Suppose that $f: \mathbb{Z}_{\geq0} \to \mathbb{Z}_{\geq0}$ is a strictly increasing function such that for all vertices $w\in G$, $\displaystyle\lim_{k\to\infty}\mu_{f(k)}(w)$ exists. Then denote by $\displaystyle\lim_{k\to\infty}\mu_{f(k)}$ the pointwise limit. Namely, for all $w \in V(G)$, let $\displaystyle \left( \lim_{k\to\infty}\mu_{f(k)} \right)(w)$ be $\displaystyle\lim_{k\to\infty}(\mu_{f(k)}(w))$.

For a given graph $G$, let $D = D(G)$
be the set of all ordered pairs $(\mu,\nu)$ of distributions on $V(G)$ that satisfy $\sum_{w \in V(G)} \mu(w) = \sum_{w \in V(G)} \nu(w)$. Further, let $D_{\geq0}$ be the set of all ordered pairs $(\mu,\nu) \in D$ with $\mu,\nu$ nonnegative distributions.

We now introduce some terminology from optimal transport theory. We follow definitions equivalent to those in the book of Peyre and Cuturi \cite{COTFNT}. 

In Definitions~\ref{def: terry the transportation plan fairy},~\ref{def: carrie the cost function fairy}, and~\ref{def: warry the wasserstein distance fairy}, we let $G$ be a graph with two nonnegative distributions $\mu, \nu$ on $V(G)$ such that $(\mu,\nu)\in D_{\geq0}$.

\begin{definition}[c.f. \cite{COTFNT}]\label{def: terry the transportation plan fairy}
Define a \textbf{transportation plan} from $\mu$ to $\nu$ for $(\mu,\nu)\in D_{\geq0}$ to be a function $T_{\mu,\nu}: V(G)\times V(G) \to \mathbb{R}$ such that 
\begin{itemize}
    \item for any vertices $w_1, w_2 \in V(G)$, we have that $T_{\mu,\nu}(w_1,w_2) \geq 0$,
    \item for all vertices $w\in V(G)$, we have that $\sum_{i \in V(G)} T_{\mu,\nu}(w,i) = \mu(w)$,
    \item for all vertices $w\in V(G)$, we have that $\sum_{i \in V(G)} T_{\mu,\nu}(i,w) = \nu(w)$.
\end{itemize} 
Denote by $\mathcal{T}_{\mu,\nu}$ the set of all transportation plans from $\mu$ to $\nu$. 
\end{definition}

Following \cite{kantorovich2006translocation}, we can intuitively visualize a transportation plan $T_{\mu,\nu}$ as a way to move mass distributed over the vertices of $G$ according to $\mu$ along the edges of $G$ to an arrangement according to $\nu$. We now consider the \textit{cost} of a given transportation plan $T_{\mu,\nu}$: if moving 1 unit of mass across 1 edge has a cost of 1, how much does it cost to move the mass distribution of $\mu$ to that of $\nu$ according to $T_{\mu,\nu}$?

\begin{definition}[c.f. \cite{COTFNT}]\label{def: carrie the cost function fairy}
Define the \textbf{cost function} $C: \mathcal{T}_{\mu,\nu}\to\mathbb{R}$ to take any transportation plan $T$ to its cost $$C(T) = \sum_{(w_1,w_2) \in V(G)\times V(G)} \dist(w_1,w_2) \cdot T(w_1,w_2).$$
\end{definition}

\begin{definition} [c.f. \cite{COTFNT}] \label{def: warry the wasserstein distance fairy} 
Define the \textbf{Wasserstein distance} $W_{\geq 0}: D_{\geq 0} \to \mathbb{R}_{\geq 0}$ by $W_{\geq 0}(\mu,\nu) := \displaystyle \min_{T \in \mathcal{T}_{u,v}} C(T)$
\end{definition}

We can thus interpret the Wasserstein distance as the minimum cost of transporting mass from its arrangement in distribution $\mu$ to an arrangement in distribution $\nu$.

\begin{remark} \label{rem: addy the addition fairy}
Note that for any distribution $\psi$ on $V(G)$, if $\mu$, $\nu$, $\mu+\psi$, and $\nu+\psi$ are all nonnegative, then $W_{\geq0}(\mu,\nu) = W_{\geq0}(\mu+\psi,\nu+\psi)$ (for a proof, see for example \cite{jiradilok2021transportation}, which notes that the Wasserstein distance between $\mu$ and $\nu$ can be defined in terms of $\mu-\nu$).
\end{remark}

Let $G$ be a  graph with two distributions $\mu, \nu$ on $V(G)$ such that $$\sum_{w \in V(G)} \mu(w) = \sum_{w \in V(G)} \nu(w).$$ Let $\psi$ be a distribution such that $\mu+\psi$ and $\nu+\psi$ are both nonnegative. We extend the domain of the Wasserstein distance to include distributions $\mu,\nu$ with negative entries by defining $W(\mu,\nu): D\to \mathbb{R}_{\geq0}$ to be $W_{\geq0}(\mu+\psi,\nu+\psi)$. By Remark~\ref{rem: addy the addition fairy}, $W(\mu,\nu)$ is well-defined.

Even if $\mu$ and $\nu$ have negative entries, we can interpret $W(\mu,\nu)$ as the cost of some optimal ``transportation plan" that moves mass from distribution $\mu$ to distribution $\nu$. 

Thus, in the rest of the paper, ``transportation plans" between distributions $\mu$ and $\nu$ allow for negative entries in $\mu$ and $\nu$. In this case, a transportation plan rigorously refers to a transportation plan from $\mu + \psi$ to $\nu+\psi$ for some $\psi$ large enough that $\mu+\psi$ and $\nu+\psi$ are both nonnegative. In particular, the movement of mass between $\mu$ and $\nu$ from a vertex $w_1$ to a different vertex $w_2$ actually refers to that same movement of mass from $w_1$ to $w_2$ between the distributions $\mu+\psi$ and $\nu+\psi$. 

We now discuss a different way of calculating the Wasserstein distance.

\begin{definition}[c.f. \cite{COTFNT}]
Given a graph $G$, a \textbf{1-Lipschitz function} $\ell: V(G) \to \mathbb{R}$ is a function on the vertices of G where for any $w_1, w_2 \in V(G)$, we have that $|\ell(w_1) - \ell(w_2)| \leq \dist(w_1,w_2)$. Let $L(G)$ be the set of all 1-Lipschitz functions on $G$.
\end{definition}

\begin{theorem}[Kantorovich Duality, c.f. \cite{COTFNT}]
Let $G$ be a  graph with two distributions $\mu, \nu$ on $V(G)$ such that $\sum_{w \in V(G)} \mu(w) = \sum_{w \in V(G)} \nu(w)$. Then $$W(\mu,\nu) = \max_{\ell \in L} \sum_{w \in G} \ell(w)(\mu(w) - \nu(w)).$$
\end{theorem}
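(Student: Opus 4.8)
The plan is to prove the Kantorovich duality statement, which asserts that the minimum-cost formulation of the Wasserstein distance equals the maximum over $1$-Lipschitz functions of $\sum_{w} \ell(w)(\mu(w)-\nu(w))$. Since the paper has extended $W$ to zero-sum differences via the shift-invariance in Remark~\ref{rem: addy the addition fairy}, and since both the cost objective and the dual objective depend only on $\mu-\nu$ (the dual because $\sum_w \ell(w)\psi(w)$ cancels when we replace $\mu,\nu$ by $\mu+\psi,\nu+\psi$), I would first reduce to the nonnegative case $(\mu,\nu)\in D_{\geq 0}$ by choosing $\psi$ large enough to make both distributions nonnegative; this lets me work entirely with genuine transportation plans.

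The core is a linear programming duality argument. First I would set up the primal as the linear program of Definition~\ref{def: terry the transportation plan fairy} and~\ref{def: carrie the cost function fairy}: minimize $\sum_{w_1,w_2}\dist(w_1,w_2)\,T(w_1,w_2)$ subject to $T\geq 0$, the row-marginal constraints $\sum_i T(w,i)=\mu(w)$, and the column-marginal constraints $\sum_i T(i,w)=\nu(w)$. I would attach dual variables $f(w)$ to the row constraints and $g(w)$ to the column constraints and form the Lagrangian dual. The dual constraint coming from the primal variable $T(w_1,w_2)\geq 0$ is $f(w_1)+g(w_2)\leq \dist(w_1,w_2)$, and the dual objective is $\sum_w f(w)\mu(w)+\sum_w g(w)\nu(w)$. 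Strong duality for finite linear programs (the primal is feasible since $(\mu,\nu)\in D_{\geq0}$ with equal total mass, and its feasible region is bounded, so an optimum is attained) gives equality of the two optimal values.

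Next I would simplify the dual into the stated single-function form. Given a dual-optimal $(f,g)$, at the optimum one may take $g=-f$: the constraint becomes $f(w_1)-f(w_2)\leq \dist(w_1,w_2)$ for all pairs, which (by symmetry of the pairs) is exactly the condition that $f$ is $1$-Lipschitz, and the objective becomes $\sum_w f(w)(\mu(w)-\nu(w))$. To justify the restriction $g=-f$ I would argue the standard $c$-transform reduction: for fixed $g$ the best choice is $f(w_1)=\min_{w_2}(\dist(w_1,w_2)-g(w_2))$, and since $\dist$ is a metric the resulting pair is $1$-Lipschitz, after which using $\sum_w\mu(w)=\sum_w\nu(w)$ lets one replace $g$ by $-f$ without decreasing the objective. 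Setting $\ell=f$ then yields $W(\mu,\nu)=\max_{\ell\in L}\sum_w \ell(w)(\mu(w)-\nu(w))$.

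I expect the main obstacle to be the $c$-transform step that collapses the two-function dual to the single $1$-Lipschitz function $\ell$; verifying that the metric (triangle-inequality) property of $\dist$ forces the optimal $f$ to be $1$-Lipschitz, and that one loses nothing by taking $g=-f$, is the one place where the argument is more than bookkeeping. Everything else—feasibility and boundedness of the primal, invoking finite-dimensional strong duality, and the reduction from signed distributions to $D_{\geq0}$ via the shift $\psi$—is routine. An alternative that avoids LP-duality machinery would be to prove the two inequalities directly: the easy direction $W(\mu,\nu)\geq \sum_w\ell(w)(\mu(w)-\nu(w))$ for every $1$-Lipschitz $\ell$ follows because each unit of mass moved from $w_1$ to $w_2$ in any plan contributes $\dist(w_1,w_2)\geq \ell(w_1)-\ell(w_2)$ to the cost while contributing $\ell(w_1)-\ell(w_2)$ to the dual sum; the reverse inequality, exhibiting a plan matching the best Lipschitz potential, is the harder half and is again most cleanly obtained from complementary slackness in the LP.
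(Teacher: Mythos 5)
The paper does not prove this theorem: it is imported as a known result (cited to Peyr\'e--Cuturi) and used as a black box, so there is no internal proof to compare yours against. Judged on its own, your proposal is the standard and correct finite-dimensional proof of Kantorovich duality. The reduction to $D_{\geq 0}$ via the shift $\psi$ is sound because both the primal cost and the dual objective depend only on $\mu-\nu$; the primal LP is feasible (e.g.\ the product coupling normalized by total mass) and bounded, so finite LP strong duality applies; and the $c$-transform step you flag as the main obstacle does go through: after replacing $f$ by $w_1\mapsto\min_{w_2}(\dist(w_1,w_2)-g(w_2))$ the triangle inequality makes $f$ $1$-Lipschitz, and then $g(w_2)\leq\dist(w_2,w_2)-f(w_2)=-f(w_2)$ together with $\nu\geq 0$ shows that replacing $g$ by $-f$ cannot decrease the objective while preserving feasibility. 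One small imprecision: it is the nonnegativity of $\nu$ (available after the shift), not the equal-total-mass hypothesis, that justifies this last replacement; equal total mass is what makes the primal feasible in the first place. Your alternative sketch of the easy inequality (each unit of mass moved from $w_1$ to $w_2$ costs $\dist(w_1,w_2)\geq\ell(w_1)-\ell(w_2)$) is also correct and is all the paper actually needs in several of its applications.
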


We now seek a way to refer to a pair of random walks on a graph, as these pairs of random walks are the objects we study. The information needed to define such a pair consists of the graph $G$, the starting vertices $u$ and $v$ of the two random walks, and the respective lazinesses $\alpha$ and $\beta$ of the random walks. We thus define a \textit{Guvab} comprised of this information.

\begin{definition}
We define a \textbf{Guvab} to be a tuple $(G,u,v,\alpha,\beta)$ where $G$ is a finite, connected, simple graph, $u,v \in V(G)$, and $\alpha, \beta \in [0,1]$ with $\alpha \leq \beta$. 
\end{definition}

\begin{definition}
Consider a graph $G$. For any starting vertex $u\in V(G)$ and laziness $\alpha\in[0,1]$, consider the random walk $R = \{R_k\}_{k=0}^{\infty}$ such that $R_0=u$, and, for $i\geq 1$, we have $R_i=R_{i-1}$ with probability $\alpha$, and $R_i=t$ with probability $\frac{1-\alpha}{\deg(R_{i-1})}$ for any $t\in N(R_{i-1})$. We say the probability distribution $\mu_k$ for $R_k$ is a \textbf{k-step probability measure}.
\end{definition}

Consider some Guvab $\mathcal{G} = (G,u,v,\alpha,\beta)$. We let $X(\mathcal{G}) = \{X_k\}_{k=0}^{\infty}$ be the Markov chain corresponding to a random walk with laziness $\alpha$ starting from vertex $u$ and we let $Y(\mathcal{G}) = \{Y_k\}_{k=0}^{\infty}$ be the Markov chain corresponding to a random walk with laziness $\beta$ starting from vertex $v$. When it is clear which Guvab $\mathcal{G}$ we are referring to, we write $X,Y$ instead of $X(\mathcal{G}), Y(\mathcal{G})$, respectively.

Consider some Guvab $\mathcal{G} = (G,u,v,\alpha,\beta)$. For all $k \geq 0$ we let $\mu_k(\mathcal{G}),\nu_k(\mathcal{G})$ be the k-step probability measures of $X(\mathcal{G}), Y(\mathcal{G})$ respectively. We let $\xi_k(\mathcal{G}) = \mu_k(\mathcal{G}) - \nu_k(\mathcal{G})$ and $W_k(\mathcal{G}) = W(\mu_k(\mathcal{G}), \nu_k(\mathcal{G}))$. When it is clear which Guvab $\mathcal{G}$ we are referring to, we write $\mu_k,\nu_k,\xi_k,W_k$ instead of $\mu_k(\mathcal{G}),\nu_k(\mathcal{G}),\xi_k(\mathcal{G}),W_k(\mathcal{G})$, respectively.

Given a Guvab $\mathcal{G}$, we define $P_{\alpha}$ and $P_{\beta}$ to be the transition probability matrices of $X$ and $Y$, respectively. In particular, for all $k$, we have that $\mu_k = \mu_0 P_{\alpha}^k$ and $\nu_k = \nu_0 P_{\beta}^k$, where the distributions are row vectors. We also define $P$ to be the transition probability matrix of a random walk with zero laziness on $G$ (note that $P$ does not depend on the starting vertex of the random walk). We note that $P_\alpha$ and $P_\beta$ only depend on $\alpha$ and $\beta$, not $u$ and $v$. In particular, $P_\alpha = \alpha I + (1-\alpha)P$ and $P_\beta = \beta I +(1-\beta)P$.

\begin{lemma} \label{lem: eileen the eigval sum fairy}
Let $\{\lambda_1,\ldots, \lambda_n\}$ be the union of the set of eigenvalues of $P_{\alpha}$ and the set of eigenvalues of $P_{\beta}$. For all vertices $w$, there exist some constants $c^w_i$ such that for all $k \geq 1$, we have $\xi_k(w) = \sum_{i = 1}^n c^w_i \lambda_i^k$.
\end{lemma}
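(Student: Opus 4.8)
The plan is to diagonalize (or at least spectrally decompose) the transition matrices $P_\alpha$ and $P_\beta$ and read off the claimed eigenvalue expansion coordinate-by-coordinate. Recall that $\xi_k = \mu_k - \nu_k = \mu_0 P_\alpha^k - \nu_0 P_\beta^k$, where $\mu_0, \nu_0$ are the indicator row vectors of the starting vertices $u, v$. So evaluating at a vertex $w$ gives $\xi_k(w) = (\mu_0 P_\alpha^k)(w) - (\nu_0 P_\beta^k)(w)$, and everything reduces to understanding how the entries of $P_\alpha^k$ and $P_\beta^k$ depend on $k$.

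First I would handle the spectral decomposition of a single matrix, say $P_\alpha$. The cleanest route that avoids any diagonalizability assumption is to invoke the fact that the $(a,b)$ entry of $P_\alpha^k$ is a linear combination of the terms $\lambda^k$ (for $\lambda$ ranging over the eigenvalues of $P_\alpha$) with coefficients that are themselves polynomials in $k$ of degree less than the size of the corresponding Jordan block. To get the clean form stated in the lemma, with no polynomial factors, I would want to argue that $P_\alpha$ is in fact diagonalizable. This follows because $P_\alpha = \alpha I + (1-\alpha) P$, and $P$ is the transition matrix of a random walk on a connected graph, hence is similar to a symmetric matrix: conjugating $P$ by the diagonal matrix $\mathrm{diag}(\sqrt{\deg(v)})$ yields a symmetric matrix (the walk is reversible with respect to the degree distribution), so $P$ is diagonalizable with real eigenvalues, and therefore so is $P_\alpha$, since adding $\alpha I$ and scaling preserves eigenvectors. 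Diagonalizability then gives $(P_\alpha^k)_{a,b} = \sum_i \lambda_i^k \cdot (\text{const}_{a,b,i})$ with genuinely constant coefficients.

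Next I would combine the two expansions. Writing the diagonalization of $P_\alpha$ as a sum over its eigenvalues and likewise for $P_\beta$, I get
\begin{align*}
\xi_k(w) &= \sum_{\lambda \in \mathrm{spec}(P_\alpha)} a^w_\lambda \lambda^k \; - \; \sum_{\lambda \in \mathrm{spec}(P_\beta)} b^w_\lambda \lambda^k
\end{align*}
for appropriate constants $a^w_\lambda, b^w_\lambda$ (obtained by pinning down the relevant rows and columns of the eigenbasis change-of-basis matrices and the fixed starting vectors $\mu_0, \nu_0$). Since the index set $\{\lambda_1, \dots, \lambda_n\}$ in the statement is the \emph{union} of the two spectra, I would group the terms by distinct eigenvalue: each $\lambda_i$ in the union receives a contribution $a^w_{\lambda_i}$ if it is an eigenvalue of $P_\alpha$ (and $0$ otherwise) minus $b^w_{\lambda_i}$ if it is an eigenvalue of $P_\beta$ (and $0$ otherwise). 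Setting $c^w_i := a^w_{\lambda_i} - b^w_{\lambda_i}$ yields exactly $\xi_k(w) = \sum_{i=1}^n c^w_i \lambda_i^k$ for all $k \geq 1$, as desired.

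The main obstacle I anticipate is justifying the clean, polynomial-free form, i.e.\ establishing diagonalizability rigorously rather than settling for the Jordan-form version with polynomial coefficients. The reversibility/symmetrization argument is the crux: one must check that $P$ really is conjugate to a symmetric matrix via the degree-weighting, which hinges on the fact that $\frac{1}{\deg(a)} \cdot [a \sim b]$ symmetrizes to $\frac{[a\sim b]}{\sqrt{\deg(a)\deg(b)}}$. Once diagonalizability of $P$ is in hand, the passage to $P_\alpha$ and $P_\beta$ is immediate since they are affine functions of $P$ sharing its eigenvectors. A minor secondary point is bookkeeping the union of spectra so that shared eigenvalues (for instance, any $\lambda$ lying in both spectra) are not double-counted as separate indices; folding coinciding eigenvalues together and combining their coefficients takes care of this.
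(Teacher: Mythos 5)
Your proposal is correct and follows essentially the same route as the paper: both arguments rest on the diagonalizability of $P_\alpha$ and $P_\beta$ (via reversibility of the random walk), expand $\mu_k$ and $\nu_k$ as linear combinations of $\lambda^k$ with constant coefficients, subtract, and collect terms over the union of the two spectra. Your explicit symmetrization of $P$ by $\mathrm{diag}(\sqrt{\deg(v)})$ just fills in a detail the paper delegates to a citation.
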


\begin{proof}
This follows from the fact that $P_{\alpha}$ and $P_{\beta}$ are diagonalizable (since random walks are reversible (\cite{levin2017markov}) and thus have diagonalizable matrices (\cite{levin2017markov}, Chapter 12)). Say $P_{\alpha}$ has eigenvalues $\lambda_1, \ldots, \lambda_m$ and $P_{\beta}$ has eigenvalues $\lambda_{m+1},\ldots, \lambda_{m'}$. Since $P_{\alpha}$ is diagonalizable, we can write it as $ADA^{-1}$ for invertible matrix $A$ and diagonal matrix $D$ with diagonal entries $\lambda_1, \ldots, \lambda_m$. 

Then $\mu_k = \mu_0 P_{\alpha}^k = \mu_0 A D^k A^{-1}$, so for all $w$ there exist constants $x^w_1, \ldots, x^w_m$ such that for all $k \geq 1$ we have $\mu_k(w) = \sum_{i = 1}^m x^w_i \lambda_i^k$. By similar reasoning, for all $w$ there exist constants $y^w_{m+1}, \ldots, y^w_{m'}$ such that for all $k \geq 1$ we have $\nu_k(w) = \sum_{i = m+1}^{m'} y^w_i \lambda_i^k$. Therefore, for all $w$, there exist some constants $c^w_i$ such that for all $k \geq 1$, we have that $\xi_k(w) = \sum_{i = 1}^{m'} c^w_i \lambda_i^k$. If for any $i$ and $j$ we have $\lambda_i = \lambda_j$, we can collect these like terms and thus create a list of distinct eigenvalues $\lambda_1,\ldots, \lambda_n$ and constants $c^w_1,\ldots c^w_n$ such that for all $k \geq 1$, we have $\xi_k(w) = \sum_{i = 1}^n c^w_i \lambda_i^k$. In particular, $\lambda_1,\ldots, \lambda_n$ will be exactly the elements of the union of the set of eigenvalues of $P_\alpha$ and the set of eigenvalues of $P_\beta$. 
\end{proof}

In the next section, we discuss when and how the Wasserstein distance converges, which is related to the convergence of probability distributions of random walks. Since random walks can be viewed as Markov chains, we reference some classical Markov chain theory, using the same definitions as in \cite{levin2017markov}. We also use the following well-known Markov chain theorem.

\begin{theorem} [c.f. \cite{levin2017markov}] \label{thm: aperiodic irreducible}
Suppose that a Markov chain $X$ is aperiodic and irreducible with probability distributions $(\mu_0,\mu_1,\ldots)$ and stationary distribution $\pi$. Then $\lim_{k\to\infty}\mu_k = \pi$.
\end{theorem}

Finally, in our discussion of convergence, we encounter cases where the Wasserstein distance is eventually constant. To quantify this precisely, we provide the following definition.

\begin{definition}
We call an infinite sequence $\{S_i\}_{i=0}^{\infty}$ for $S_i \in \mathbb{R}$ \textbf{eventually constant} if there exists $N \geq 0$ such that for all $k \geq N$, we have that $S_k = S_N$.
\end{definition}

\section{Classifying End Behavior of $W_k$}

In this section, we seek to enumerate the possible end behaviors of the Wasserstein distance for a Guvab. In particular, we prove results about when the Wasserstein distance converges and what it converges to for different Guvabs. The classification of Guvabs by end behavior paves the way for our later discussion of the rate of convergence of the Wasserstein distance.

We begin with a technical lemma showing that the limit of the Wasserstein distance is the Wasserstein distance of the limit, as we expect.

\begin{lemma}\label{lem: converges to stationary distance}
Let $f: \mathbb{Z}_{\geq 0} \to \mathbb{Z}_{\geq 0}$ be a strictly increasing function. If $\displaystyle\lim_{k\to\infty}\mu_{f(k)}=\mu$ and $\displaystyle\lim_{k\to\infty}\nu_{f(k)}=\nu$ (and, in particular, both limits exist), then $$\lim_{k\to\infty}W(\mu_{f(k)},\nu_{f(k)})=W(\mu,\nu).$$
\end{lemma}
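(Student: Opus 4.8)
The plan is to prove that $W$ is not merely continuous but in fact Lipschitz as a function of the difference of the two distributions, and then feed in the pointwise convergence. Since $V(G)$ is finite, every distribution lives in $\mathbb{R}^{|V(G)|}$, so the hypothesized pointwise limits are the same as limits in any norm; in particular $\xi_{f(k)} := \mu_{f(k)} - \nu_{f(k)}$ converges to $\xi := \mu - \nu$ in the $\ell^1$ norm $\|\cdot\|_1 = \sum_{w}|\cdot|$. I would also first observe that each $\mu_{f(k)}$ and $\nu_{f(k)}$ is a probability measure, so summing the pointwise limit over the finitely many vertices shows $\sum_w \mu(w) = \sum_w \nu(w) = 1$; hence $(\mu,\nu) \in D$ and $W(\mu,\nu)$ is well-defined, so the statement makes sense.

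The main tool is Kantorovich Duality, which expresses $W(\mu,\nu) = \max_{\ell \in L}\sum_w \ell(w)\,\xi(w)$. The first key step is to notice that because $\xi$ is a zero-sum distribution, the sum $\sum_w \ell(w)\xi(w)$ is unchanged when a constant is added to $\ell$. I would therefore fix a basepoint $v_0 \in V(G)$ and restrict to the set $L_0$ of $1$-Lipschitz functions with $\ell(v_0)=0$; for such $\ell$ the Lipschitz condition forces $|\ell(w)| \le \dist(w,v_0) \le \operatorname{diam}(G)$ for every $w$, and the maximum over $L_0$ agrees with the maximum over all of $L$. Thus $W(\mu,\nu) = F(\xi)$, where $F(\eta) := \max_{\ell \in L_0}\sum_w \ell(w)\eta(w)$ is the support function of the bounded set $L_0$.

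The second key step is the Lipschitz estimate. For any two zero-sum distributions $\eta,\eta'$, I would let $\ell'$ attain the maximum defining $F(\eta')$; then
\begin{equation*}
F(\eta') - F(\eta) \le \sum_w \ell'(w)\big(\eta'(w)-\eta(w)\big) \le \operatorname{diam}(G)\,\|\eta'-\eta\|_1,
\end{equation*}
and by symmetry $|F(\eta')-F(\eta)| \le \operatorname{diam}(G)\,\|\eta'-\eta\|_1$. Applying this with $\eta' = \xi_{f(k)}$ and $\eta = \xi$ gives $|W(\mu_{f(k)},\nu_{f(k)}) - W(\mu,\nu)| \le \operatorname{diam}(G)\,\|\xi_{f(k)}-\xi\|_1$, and since the right-hand side tends to $0$, the conclusion follows.

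I expect the only genuine obstacle to be the non-compactness of the full set $L$ of $1$-Lipschitz functions: without care, the supremum in the dual formula is taken over an unbounded family, and one cannot read off a uniform Lipschitz constant. This is resolved precisely by the zero-sum observation, which lets me quotient out the additive constants and restrict to the bounded family $L_0$, after which the estimate above is routine. An alternative, purely primal route would be to perturb an optimal transportation plan for $(\mu,\nu)$ into an admissible plan for $(\mu_{f(k)},\nu_{f(k)})$ and bound the change in cost, but this requires handling the negative-entry convention for transportation plans and is messier than the dual argument.
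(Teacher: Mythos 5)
Your proof is correct, but it takes a genuinely different route from the paper. The paper treats $W$ as a metric on equal-mass distributions and applies the triangle inequality twice to sandwich $W(\mu_{f(k)},\nu_{f(k)})$ between $W(\mu,\nu)-W(\mu,\mu_{f(k)})-W(\nu,\nu_{f(k)})$ and $W(\mu,\nu)+W(\mu_{f(k)},\mu)+W(\nu,\nu_{f(k)})$, then observes that the error terms $W(\mu,\mu_{f(k)})=W(\mu-\mu_{f(k)},\Tilde{\textbf{0}})$ tend to $0$. You instead go through Kantorovich duality, quotient out additive constants to restrict to the compact family $L_0$ of $1$-Lipschitz functions vanishing at a basepoint, and extract the explicit estimate $|W(\mu',\nu')-W(\mu,\nu)|\leq \operatorname{diam}(G)\,\lVert \xi'-\xi\rVert_1$. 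Your normalization step is exactly the right fix for the non-compactness of $L$, and the zero-sum hypothesis needed for it holds here since all the distributions involved have total mass $1$. What each approach buys: the paper's argument is shorter and needs only the metric axioms for $W$, but it leaves the claim $\lim_k W(\mu-\mu_{f(k)},\Tilde{\textbf{0}})=0$ essentially asserted rather than quantified; your dual argument supplies a concrete Lipschitz modulus of continuity for $W$ in the $\ell^1$ norm, which is a slightly stronger and more reusable statement, at the cost of invoking duality and the compactness/normalization discussion. Your closing remark about $(\mu,\nu)\in D$ being forced by finiteness of $V(G)$ is a detail the paper glosses over and is worth having.
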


\begin{proof}
Note that, by the triangle inequality, $$W(\mu_{f(k)},\nu_{f(k)})\leq W(\mu_{f(k)},\mu)+W(\mu,\nu)+W(\nu,\nu_{f(k)})$$ and $$W(\mu,\mu_{f(k)})+W(\mu_{f(k)},\nu_{f(k)})+W(\nu_{f(k)},\nu)\geq W(\mu,\nu).$$ This implies that
\begin{align*}
W(\mu,\nu)-W(\mu,\mu_{f(k)})-W(\nu_{f(k)},\nu)
&\leq W(\mu_{f(k)},\nu_{f(k)})\\ 
&\leq W(\mu_{f(k)},\mu)+W(\mu,\nu)+W(\nu,\nu_{f(k)}). 
\end{align*}
However, $$\displaystyle\lim_{k\to\infty}W(\mu,\mu_{f(k)})=\lim_{k\to\infty}W(\mu-\mu_{f(k)},0)=0$$ (and similarly for $W(\nu,\nu_{f(k)})$). The above inequality implies that $$\lim_{k\to\infty}W(\mu_{f(k)},\nu_{f(k)})=W(\mu,\nu),$$ as desired.
\end{proof}

Due to classical Markov chain theory, we expect that in most cases, the probability distributions of both random walks converge to the same stationary distribution, and thus $\lim_{k\to\infty} W_k = 0$. The following definition and lemma quantify the stationary distribution that most random walks converge to. The subsequent theorem specifies what the ``most cases'' in which the distance goes to zero are.

\begin{definition}
For any graph $G$, we define the distribution $\pi$ to be such that for any $i \in G$, we have $\pi_i=\displaystyle \frac{\deg(i)}{\sum_{j\in G}\deg(j)}.$ 
\end{definition}

\begin{lemma} \label{lem: pippa the pi fairy}
When $0 < \alpha < 1$, the k-step probability measure $\mu_k$ converges to the stationary distribution $\pi$.
\end{lemma}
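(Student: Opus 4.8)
The plan is to invoke the classical convergence theorem for Markov chains (Theorem~\ref{thm: aperiodic irreducible}) by verifying its two hypotheses for the lazy random walk $X$ when $0 < \alpha < 1$: namely, that the chain is \emph{irreducible} and \emph{aperiodic}, and that its stationary distribution is exactly $\pi$. First I would establish irreducibility: since $G$ is finite and connected, for any two vertices there is a path between them, and each single edge-step is taken with positive probability $\frac{1-\alpha}{\deg(\cdot)} > 0$ (using $\alpha < 1$), so every state is reachable from every other in finitely many steps. Next I would establish aperiodicity: because $0 < \alpha$, every vertex has a self-loop probability $P_\alpha(i,i) \geq \alpha > 0$, so the gcd of return times at each vertex is $1$. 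Thus $X$ is aperiodic and irreducible, and Theorem~\ref{thm: aperiodic irreducible} applies to give $\lim_{k\to\infty}\mu_k = \pi'$, where $\pi'$ is the (unique) stationary distribution of $P_\alpha$.

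It then remains to identify this stationary distribution with the specific $\pi$ defined just before the lemma, i.e.\ $\pi_i = \deg(i)/\sum_{j} \deg(j)$. The key observation is that $P_\alpha = \alpha I + (1-\alpha)P$, so a distribution is stationary for $P_\alpha$ if and only if it is stationary for $P$: indeed $\pi P_\alpha = \alpha \pi + (1-\alpha)\pi P$, which equals $\pi$ exactly when $\pi P = \pi$. Hence I only need to check that $\pi$ is stationary for the non-lazy walk $P$, where $P(i,j) = \frac{1}{\deg(i)}$ for $j \in N(i)$. A direct computation gives $(\pi P)_j = \sum_{i \in N(j)} \pi_i P(i,j) = \sum_{i \in N(j)} \frac{\deg(i)}{\sum_k \deg(k)} \cdot \frac{1}{\deg(i)} = \frac{\deg(j)}{\sum_k \deg(k)} = \pi_j$, confirming $\pi P = \pi$. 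By uniqueness of the stationary distribution for an irreducible chain, $\pi' = \pi$, completing the argument.

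I do not anticipate a genuinely hard step here, as the result is essentially a packaging of standard Markov chain facts; the main thing to be careful about is the boundary exclusion $0 < \alpha < 1$. The strict inequality $\alpha < 1$ is precisely what guarantees edge-steps have positive probability (ruling out the degenerate ``never move'' chain at $\alpha = 1$), while $\alpha > 0$ is what forces a positive self-loop and hence aperiodicity (ruling out bipartite-type periodicity that could occur at $\alpha = 0$). So the only real subtlety is confirming that both strict inequalities are used and that they jointly deliver exactly the two hypotheses of Theorem~\ref{thm: aperiodic irreducible}. Everything else is the routine stationarity verification for the degree distribution on a connected graph.
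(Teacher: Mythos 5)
Your proposal is correct and follows essentially the same route as the paper: verify aperiodicity (via the positive self-loop from $\alpha>0$) and irreducibility (via connectedness and $\alpha<1$), check that $\pi$ is stationary by the same degree computation, and invoke Theorem~\ref{thm: aperiodic irreducible}. Your version is slightly more explicit about where each strict inequality is used, but the argument is the same.
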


\begin{proof}
Recall that $X$ is the Markov chain of the random walk. We have that $X$ is aperiodic (we can return from a vertex to itself in one step) and irreducible ($G$ is connected). We have that for any vertex $w\in G$, $$\pi_w = \displaystyle \sum_{i \sim w}\pi_i\frac{1}{\deg(i)} = \displaystyle \alpha\pi_w + \sum_{i \sim w}\pi_i\frac{1-\alpha}{\deg(i)}.$$ Thus, $\pi$ is a stationary distribution of $X$. Hence, by Theorem \ref{thm: aperiodic irreducible}, $\pi$ is a limiting distribution for $X$ and thus $\lim_{k\to\infty}\mu_k = \pi$.
\end{proof}

\begin{theorem}\label{thm: 0-convergence}
The value $W(\mu_k,\nu_k)$ converges to $0$ as $k \to \infty$ if and only if one of the following conditions is true:
\begin{itemize}
    \item $0 < \alpha\leq \beta < 1$,
    \item $\alpha = \beta = 1$ and $u = v$,
    \item $G$ is not bipartite and $0=\alpha\leq \beta < 1$,
    \item $\alpha = \beta = 0$ and there exists a path from $u$ to $v$ with an even number of steps.
\end{itemize}
\end{theorem}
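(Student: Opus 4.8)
The plan is to reduce the statement to understanding the limiting behavior of each walk individually and then comparing the two. The key reduction is that $W(\mu_k,\nu_k)\to 0$ precisely when $\mu_k$ and $\nu_k$ have the same limiting behavior. Indeed, since $V(G)$ is finite the distributions live in a fixed finite-dimensional space, and the Wasserstein distance is a metric on probability distributions, so $W(\mu,\nu)=0$ exactly when $\mu=\nu$. Combined with Lemma~\ref{lem: converges to stationary distance}, this shows that along any subsequence $f(k)$ on which both $\mu_{f(k)}$ and $\nu_{f(k)}$ converge, $W(\mu_{f(k)},\nu_{f(k)})\to 0$ iff the two limits coincide. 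Since a sequence tends to $0$ iff both its even- and odd-indexed subsequences do, and since the only obstruction to convergence of $\mu_k$ will be period-$2$ oscillation, it suffices to analyze the even and odd subsequences separately: $W_k\to 0$ iff the even limits of $\mu$ and $\nu$ agree and the odd limits agree.

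Next I would determine the limiting behavior of a single walk $\{\mu_k\}$ as a function of $\alpha$ and the bipartiteness of $G$. When $0<\alpha<1$, Lemma~\ref{lem: pippa the pi fairy} gives $\mu_k\to\pi$. When $\alpha=1$ the walk never moves, so $\mu_k$ is the point mass at $u$ for all $k$. When $\alpha=0$ and $G$ is not bipartite, the walk is aperiodic and irreducible, so Theorem~\ref{thm: aperiodic irreducible} gives $\mu_k\to\pi$. The remaining and most delicate case is $\alpha=0$ with $G$ bipartite, say with bipartition $V(G)=A\sqcup B$ and $u\in A$: here the walk has period $2$, so I pass to $P^2$, which is block diagonal with respect to $A$ and $B$, each block being aperiodic and irreducible on its part. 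Applying Theorem~\ref{thm: aperiodic irreducible} to these blocks shows $\mu_{2k}\to\pi_A$ and $\mu_{2k+1}\to\pi_B$, where $\pi_A,\pi_B$ are $\pi$ conditioned on $A$ and on $B$ (and since every edge joins $A$ to $B$, one gets $\pi(A)=\pi(B)=\tfrac12$). The crucial structural points, which drive all non-convergence conclusions below, are that $\pi_A$ and $\pi_B$ have disjoint supports and that $\pi$ has full support (as every vertex of a connected graph on at least two vertices has positive degree).

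With these building blocks I would carry out an exhaustive case analysis on $(\alpha,\beta)$, recalling $\alpha\le\beta$. If $0<\alpha\le\beta<1$, or if $\alpha=0\le\beta<1$ with $G$ non-bipartite, both walks converge to $\pi$, so $W_k\to 0$; these give the first and third bullets. If $\alpha=\beta=1$, then $\mu_k$ and $\nu_k$ are the constant point masses at $u$ and $v$, so $W_k=\dist(u,v)$, which is $0$ iff $u=v$; this gives the second bullet, and all other $\beta=1$ cases are excluded because the frozen distribution at $v$ cannot equal $\pi$ (full support) nor equal both of the disjoint-support limits $\pi_A,\pi_B$ simultaneously. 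If $\alpha=0<\beta<1$ with $G$ bipartite, then $\nu_k\to\pi$ (full support) while $\mu_{2k}\to\pi_A$ is supported only on $A$, so the even limits differ and $W_k\not\to 0$, consistent with the third bullet requiring non-bipartiteness. Finally, if $\alpha=\beta=0$: when $G$ is non-bipartite both converge to $\pi$ and $W_k\to 0$; when $G$ is bipartite, both oscillate, and comparing even and odd limits shows $W_k\to 0$ iff $u$ and $v$ lie in the same part of the bipartition, whereas if they lie in different parts the even limits $\pi_A$ and $\pi_B$ have disjoint supports and $W_k\not\to 0$.

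To finish I would translate the bipartite condition into the stated path condition: in a connected graph there is an even-length walk from $u$ to $v$ exactly when $G$ is non-bipartite (in which case walks of both parities always exist) or $G$ is bipartite with $u,v$ in the same part, which is the standard characterization of bipartiteness via the parity of walks; this matches the fourth bullet exactly. I expect the \emph{main obstacle} to be Step 2 in the bipartite lazy-free case: correctly identifying the period-$2$ limits through the block decomposition of $P^2$ and verifying the disjointness of supports of $\pi_A,\pi_B$ together with the full support of $\pi$, since these support facts are precisely what force every non-convergence conclusion in the case analysis.
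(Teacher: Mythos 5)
Your proposal is correct and follows essentially the same route as the paper: determine the limiting distribution(s) of each walk according to $\alpha$, $\beta$, and the bipartiteness of $G$ (using Lemma~\ref{lem: pippa the pi fairy}, Theorem~\ref{thm: aperiodic irreducible}, and the period-$2$ block decomposition in the bipartite lazy-free case), then compare limits case by case via Lemma~\ref{lem: converges to stationary distance}. Your version is somewhat more explicit than the paper's in identifying the bipartite even/odd limits and in using the disjoint-support versus full-support dichotomy to rule out convergence (the paper defers the explicit identification of $\tau_1,\tau_2$ to Lemma~\ref{lem: bipartite limiting distribution} and argues the exclusions more tersely), but this is a difference of detail rather than of method.
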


\begin{proof}
Note that for $0 < \alpha\leq \beta < 1$ , we have by Lemma \ref{lem: pippa the pi fairy} that $$\lim_{k\to\infty}\mu_k = \lim_{k\to\infty}\nu_k = \pi.$$ Thus, $\lim_{k\to\infty} W(\mu_k,\nu_k)=0$ in this case.

We now consider the cases where $\alpha=0$ or $\beta = 1$. If $\beta=1$, then $Y$ stays at $v$ forever. Thus, in order to have $\lim_{k\to\infty} W(\mu_k,\nu_k)=0$, we need $\alpha=1$ and $u=v$. This is sufficient to imply $\lim_{k\to\infty} W(\mu_k,\nu_k)=0$. 

It remains to look at the case where $\alpha=0$ and $\beta< 1$, which we break into subcases based on whether $G$ is bipartite. 

We first tackle the subcase where $G$ is not bipartite, i.e., $G$ contains an odd cycle. Since $\alpha,\beta< 1$, both $X,Y$ are aperiodic (there is a path from any vertex to itself in both an odd number of steps and an even number of steps via the odd cycle) and irreducible ($G$ is connected). Thus, $\lim_{k\to\infty}\mu_k=\lim_{k\to\infty}\nu_k=\pi$ and $\lim_{k\to\infty} W(\mu_k,\nu_k)=0$ as before.

Finally, we address the subcase where $G$ is bipartite with sides $S_1,S_2$. Here, $X$ is periodic (with period 2), so $Y$ must be periodic as well to have $\lim_{k\to\infty} W(\mu_k,\nu_k)=0$. Thus, $\beta=0$. If $u,v$ are on different sides of $G$, then $X$ and $Y$ will never be on the same side, so we cannot have $\lim_{k\to\infty} W(\mu_k,\nu_k)=0$. Otherwise, without loss of generality let $u,v \in S_1$. Consider the Markov chains $X' =\{X_{2k}\}_{k=0}^{\infty}$ and $Y' =\{Y_{2k}\}_{k=0}^{\infty}$ with vertex set $S_1$. Since $X'$ and $Y'$ are aperiodic (we can get from a vertex to itself in one step of $X'$ or $Y'$ by moving back and forth along the same edge) and irreducible ($G$ is connected), and they both have the same transition matrix, the Markov chains converge to the same stationary distribution. Similar reasoning applies for $\{X_{2k+1}\}$ and $\{Y_{2k+1}\}$. This finishes the proof for this case, hence completing the proof of Theorem~\ref{thm: 0-convergence}.
\end{proof}

In the next part of this section, we specify what the stationary distributions look like for any possible Guvab, particularly considering Guvabs with more than one stationary distribution $\pi$. We show that all Guvabs either have one set of end behaviors they converge to or switch back and forth between two sets of end behaviors.

Suppose $\alpha = 0$. Let $G$ be bipartite with sides $S_1,S_2$, and without loss of generality let $u\in S_1$. Let $X_1 = \{X_{2k}\}_0^{\infty}$ and $X_2 = \{X_{2k+1}\}_0^{\infty}$. 
Let $X_i'$ denote $X_i$ restricted to $S_i$ for $i\in\{1,2\}.$ For $i\in\{1,2\}$, let $\tau_i'$ be a distribution on $S_i$ such that $$(\tau_i')_w=\frac{2\deg(w)}{\sum_{j \in G}\deg(j)}$$ for $w \in S_i$. Further, for $i \in \{1,2\}$,  let $\tau_i$ be a distribution on $G$ that is $\tau_i'$ on $S_i$ and has value 0 elsewhere.

\begin{lemma} \label{lem: bipartite limiting distribution}
For $i\in\{1,2\}$, the distribution $\tau_i'$ is the limiting distribution of $X_i'$. 
\end{lemma}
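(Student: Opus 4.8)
The plan is to show that $X_i'$, the random walk restricted to side $S_i$ obtained by looking at every other step, is an aperiodic and irreducible Markov chain whose stationary distribution is $\tau_i'$, and then invoke Theorem~\ref{thm: aperiodic irreducible}. First I would verify that $X_i'$ is a well-defined Markov chain on vertex set $S_i$. Since $\alpha = 0$ and $G$ is bipartite with sides $S_1, S_2$, a single step of the walk always crosses from one side to the other, so two steps return to the original side; thus $\{X_{2k}\}$ lives entirely in $S_1$ and $\{X_{2k+1}\}$ entirely in $S_2$ (given $u \in S_1$), and restricting to the relevant side loses no mass. The transition matrix of $X_i'$ is the two-step transition matrix $P^2$ restricted to $S_i \times S_i$.

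Next I would check aperiodicity and irreducibility of $X_i'$. For aperiodicity, note that for any $w \in S_i$ with a neighbor $t$, the two-step walk $w \to t \to w$ has positive probability, so $X_i'$ can return to $w$ in one step of $X_i'$; hence every state has a self-loop in the induced chain and the chain is aperiodic. For irreducibility, connectivity of $G$ means any two vertices of $S_i$ are joined by a path, and since both endpoints lie in $S_i$ this path has even length, so it corresponds to a sequence of $X_i'$-steps connecting them; thus $X_i'$ is irreducible on $S_i$.

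I would then verify that $\tau_i'$ is a stationary distribution for $X_i'$. The cleanest route is to check the balance equation directly: for $w \in S_i$, I want $\sum_{z \in S_i} (\tau_i')_z \cdot (P^2)_{z,w} = (\tau_i')_w$. This follows because $\pi$ (the degree-proportional distribution on all of $G$) is stationary for $P$, hence also for $P^2$, and $\tau_i'$ is simply $\pi$ restricted to $S_i$ and rescaled by a factor of $2$; since $P^2$ does not mix mass between $S_1$ and $S_2$, the restriction of a $P^2$-stationary distribution to a single side (up to normalization) remains stationary for the induced chain. I would confirm the normalization: $\sum_{w \in S_i}(\tau_i')_w = \frac{2\sum_{w \in S_i}\deg(w)}{\sum_{j\in G}\deg(j)}$, and since every edge of the bipartite graph contributes equally to the degree sums of both sides, $\sum_{w\in S_1}\deg(w) = \sum_{w \in S_2}\deg(w) = \frac{1}{2}\sum_{j \in G}\deg(j)$, giving total mass $1$, so $\tau_i'$ is a genuine probability distribution.

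With aperiodicity, irreducibility, and the stationary distribution $\tau_i'$ established, Theorem~\ref{thm: aperiodic irreducible} immediately gives $\lim_{k\to\infty}(X_i')$-distribution $= \tau_i'$, which is the claim. The main obstacle I anticipate is the bookkeeping around the factor of $2$ and the normalization: one must be careful that restricting $\pi$ to one side and renormalizing yields exactly $\tau_i'$, and that the induced transition matrix is genuinely stochastic on $S_i$ (no mass leaks to the other side). The probabilistic facts (aperiodicity, irreducibility) are routine given bipartiteness and connectivity; the only real content is matching the normalization constant, which the even-length-path structure of the bipartite graph handles cleanly.
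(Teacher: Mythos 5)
Your proposal is correct and follows essentially the same route as the paper: show that $\tau_i'$ is a stationary distribution of $X_i'$, note that $X_i'$ is aperiodic and irreducible, and invoke Theorem~\ref{thm: aperiodic irreducible}. The only difference is in the verification of stationarity --- the paper computes directly that $\tau_1 P = \tau_2$ and $\tau_2 P = \tau_1$ (hence $\tau_i P^2 = \tau_i$), whereas you deduce it from the $P$-stationarity of $\pi$ together with the observation that $P^2$ does not mix mass between the two sides; both arguments are valid and your normalization check agrees with the paper's.
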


\begin{proof}
First, we claim that $\tau_1P^2=\tau_1$ and $\tau_2P^2=\tau_2$, where $P$ is the transition matrix of $X$. Note that for $w \in S_2$, we have 
$$\displaystyle(\tau_1P)_w= \sum_{i\sim w} \frac{1}{\deg(i)}(\tau_1)_i = \sum_{i\sim w} \left(\frac{1}{\deg(i)}\right)\left(\frac{\deg(i)}{\sum_{j\in G}\deg(j)}\right)=\frac{\deg(w)}{\sum_{j\in G}\deg(j)}=(\tau_2)_w.$$ 
This is because for all $i \sim w$, we have $i \in S_1$, which implies $(\tau_1)_i = \frac{\deg(i)}{\sum_{j\in G}\deg(j)}$. For $w\in S_1$, we have $\displaystyle(\tau_1P)_w=\sum_{i\sim w} \frac{1}{\deg(i)}(\tau_1)_i=0=(\tau_2)_w$. This is because for all $i \sim w$, we have $i \in S_2$, which implies $(\tau_1)_i = 0$. Hence, $\tau_1P=\tau_2$ and, by similar reasoning, $\tau_2P=\tau_1$. Thus, $\tau_1P^2=\tau_1$ and $\tau_2P^2=\tau_2$ as desired.

Also, we note that $\displaystyle \sum_{w\in S_i} (\tau_i')_w = \sum_{w\in S_i}\frac{2\deg(w)}{\sum_{j \in G}\deg(j)} = \frac{2|E(G)|}{\sum_{j \in G}\deg(j)} = 1.$

We now see that $\tau_i'$ is a stationary distribution of $X_i'$ for $i\in\{1,2\}$. Since $X_1'$ and $X_2'$ are irreducible and aperiodic (as shown in the proof of Theorem~\ref{thm: 0-convergence}), we have that $\tau_i'$ is a limiting distribution of $X_i'$ for $i\in\{1,2\}$.
\end{proof}

\begin{corollary}\label{cor: bipartite convergence}
If $u\in S_1$, then as $k \to \infty$, we have $\mu_{2k}$ converges to $\tau_1$ and $\mu_{2k+1}$ converges to $\tau_2$. Analogously, if $u\in S_2$ then as $k \to \infty$, we have $\mu_{2k}$ converges to $\tau_2$ and $\mu_{2k+1}$ converges to $\tau_1$.
\end{corollary}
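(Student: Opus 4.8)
The plan is to deduce this corollary directly from Lemma~\ref{lem: bipartite limiting distribution} together with the bipartite structure of $G$, treating it essentially as a bookkeeping statement that translates the convergence of the restricted chains $X_i'$ into convergence of the measures $\mu_{2k}$ and $\mu_{2k+1}$ on all of $G$. First I would record the elementary support fact: since $\alpha=0$, $G$ is bipartite, and $u\in S_1$, a straightforward induction on $k$ shows that the walk occupies $S_1$ at even times and $S_2$ at odd times, so $\mu_{2k}$ is supported on $S_1$ and $\mu_{2k+1}$ on $S_2$. Consequently $\mu_{2k}$ agrees with the $k$-step distribution of $X_1'$ on $S_1$ (and vanishes off $S_1$), and likewise $\mu_{2k+1}$ agrees with the $k$-step distribution of $X_2'$ on $S_2$.

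With this identification in hand, the even case is immediate: the chain $X_1=\{X_{2k}\}$ starts at $u\in S_1$, its $k$-step distribution is exactly $\mu_{2k}$ viewed on $S_1$, and Lemma~\ref{lem: bipartite limiting distribution} gives that this converges to $\tau_1'$. Since $\mu_{2k}$ vanishes off $S_1$ and $\tau_1$ is by definition $\tau_1'$ extended by $0$, we conclude $\lim_{k\to\infty}\mu_{2k}=\tau_1$. The odd case runs along the same lines using the chain $X_2=\{X_{2k+1}\}$, whose $k$-step distribution is $\mu_{2k+1}$ and which lives on $S_2$; Lemma~\ref{lem: bipartite limiting distribution} applied with $i=2$ yields convergence to $\tau_2'$, hence $\lim_{k\to\infty}\mu_{2k+1}=\tau_2$.

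The one point that requires care — and the only place the argument is more than a relabeling — is the odd case: unlike $X_1$, the chain $X_2=\{X_{2k+1}\}$ does not start at a fixed vertex but rather at $X_1$, whose distribution $\mu_1$ is spread over $N(u)\subseteq S_2$. I would handle this by invoking that $X_2'$ is irreducible and aperiodic (as established in the proof of Theorem~\ref{thm: 0-convergence}), so by Theorem~\ref{thm: aperiodic irreducible} its limiting distribution $\tau_2'$ is the unique stationary distribution and is therefore independent of the initial distribution; thus the convergence $\mu_{2k+1}\to\tau_2$ holds regardless of the random starting state. Finally, the case $u\in S_2$ follows from the symmetry swapping the roles of $S_1$ and $S_2$ (equivalently, of $\tau_1$ and $\tau_2$), which exchanges the even and odd limits exactly as stated.
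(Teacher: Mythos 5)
Your proposal is correct and follows essentially the same route as the paper: identify $\mu_{2k}$ (resp.\ $\mu_{2k+1}$) with the distribution of the restricted chain $X_1'$ (resp.\ $X_2'$), note it vanishes off the corresponding side, and apply Lemma~\ref{lem: bipartite limiting distribution}. Your extra remark that $X_2'$ starts from the random state $X_1$ rather than a fixed vertex, and that convergence still holds because the limiting distribution of an irreducible aperiodic chain is independent of the initial distribution, is a point the paper leaves implicit but is handled correctly.
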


\begin{proof}
Suppose $u\in S_1$; the proof will proceed analogously if $u\in S_2$. Then, $\mu_{2k}$ will always be 0 on $S_2$ and, by Lemma \ref{lem: bipartite limiting distribution}, it will converge to $\tau_1'$ on $S_1$ because $\mu_{2k}$ is the probability distribution of $X_1'$ on $S_1$. Similarly, $\mu_{2k+1}$ will always be 0 on $S_1$ and it will converge to $\tau_2'$ on $S_2$. Thus, $\mu_{2k}$ converges to $\tau_1$ and $\mu_{2k+1}$ converges to $\tau_2$.
\end{proof}

\begin{corollary}\label{corolawrence: xi0,xi1 are well defined}
For any Guvab, $\displaystyle \lim_{k \to \infty} \xi_{2k}$ and $\displaystyle \lim_{k \to \infty} \xi_{2k+1}$ are well-defined.
\end{corollary}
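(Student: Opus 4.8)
The plan is to reduce the claim to a statement about $\mu_k$ and $\nu_k$ separately. Since $\xi_k = \mu_k - \nu_k$ and pointwise limits respect subtraction, it suffices to show that for each of the two random walks the even-indexed subsequence and the odd-indexed subsequence of its $k$-step measures converge pointwise; then $\lim_{k\to\infty}\xi_{2k} = \lim_{k\to\infty}\mu_{2k} - \lim_{k\to\infty}\nu_{2k}$, and similarly for the odd subsequence. I would establish this for $\mu_k$ (the walk $X$ of laziness $\alpha$) and observe that the argument for $\nu_k$ is identical, with $\beta$ in place of $\alpha$ and $v$ in place of $u$.

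The argument for $\mu_k$ proceeds by cases on $\alpha$. If $0 < \alpha < 1$, Lemma~\ref{lem: pippa the pi fairy} gives $\lim_{k\to\infty}\mu_k = \pi$, so every subsequence, in particular the even and odd ones, converges to $\pi$. If $\alpha = 1$, then $X$ never leaves $u$, so $\mu_k$ is the point mass at $u$ for every $k$; the sequence is constant and hence its even and odd subsequences trivially converge. If $\alpha = 0$ and $G$ is not bipartite, then $X$ is aperiodic and irreducible, so exactly as in the proof of Theorem~\ref{thm: 0-convergence} we again have $\lim_{k\to\infty}\mu_k = \pi$, and both parity subsequences converge.

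The only remaining case is $\alpha = 0$ with $G$ bipartite, and this is precisely what Corollary~\ref{cor: bipartite convergence} handles: with $u \in S_1$ it gives $\lim_{k\to\infty}\mu_{2k} = \tau_1$ and $\lim_{k\to\infty}\mu_{2k+1} = \tau_2$, and symmetrically for $u \in S_2$. Thus in every case $\lim_{k\to\infty}\mu_{2k}$ and $\lim_{k\to\infty}\mu_{2k+1}$ exist, and by the identical case analysis the same holds for $\nu_k$. Subtracting the corresponding limits yields the existence of $\lim_{k\to\infty}\xi_{2k}$ and $\lim_{k\to\infty}\xi_{2k+1}$, as desired.

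I expect the work to be essentially bookkeeping, since the substantive convergence facts are already established in Lemma~\ref{lem: pippa the pi fairy}, Lemma~\ref{lem: bipartite limiting distribution}, and Corollary~\ref{cor: bipartite convergence}. The one point requiring care — and the only place where the full sequence $\mu_k$ can fail to converge, which is exactly what forces the even/odd split in the statement — is the bipartite $\alpha = 0$ case; this is precisely why Corollary~\ref{cor: bipartite convergence} is phrased in terms of the two parity subsequences rather than a single limit.
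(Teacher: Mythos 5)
Your proposal is correct and matches the paper's own argument almost exactly: the paper likewise reduces to showing that $\lim_{k\to\infty}\mu_{2k}$ and $\lim_{k\to\infty}\mu_{2k+1}$ exist for each walk separately, and runs the same case analysis on $\alpha$ (bipartite with $\alpha=0$ via Corollary~\ref{cor: bipartite convergence}, the intermediate cases via the stationary distribution $\pi$, and $\alpha=1$ via the point mass $\mathbbm{1}_u$). No gaps; your explicit remark that pointwise limits respect subtraction is the only step the paper leaves implicit.
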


\begin{proof}
We show that for any $\mu$, we have that $\lim_{k\to\infty} \mu_{2k}$ and $\lim_{k\to\infty} \mu_{2k+1}$ are well-defined; this implies the statement of the corollary. When $G$ is bipartite and $\alpha = 0$, we know that $\lim_{k\to\infty} \mu_{2k} = \tau_1$ and $\lim_{k\to\infty} \mu_{2k+1} = \tau_2$ (assuming, without loss of generality, that $u\in S_1$). When $\alpha = 0$ and $G$ is not bipartite or when $0 < \alpha < 1$, we have 
$$\lim_{k\to\infty} \mu_{2k} = \lim_{k\to\infty} \mu_{2k+1} = \pi.$$ Finally, when $\alpha = 1$, we know $\lim_{k\to\infty} \mu_{2k} = \lim_{k\to\infty} \mu_{2k+1} = \mathbbm{1}_u$. This covers all possible cases for $\alpha$ and $G$, so we are done.
\end{proof}

For any Guvab, we refer to $\displaystyle \lim_{k \to \infty} \xi_{2k}$ as $\xi^0$ and $\displaystyle \lim_{k \to \infty} \xi_{2k+1}$ as $\xi^1$.

The following corollary is quite important for the rest of this section and the remainder of this paper. Its relevance to this section is that $\lim_{k\to\infty} W_k$ will be well-defined unless $\lim_{k\to\infty} W_{2k} \neq \lim_{k\to\infty} W_{2k+1}$. The corollary is pertinent to the rest of the paper because it indicates that the rates of convergence of $\{W_{2k}\}$ and $\{W_{2k+1}\}$ are always well-defined. Thus, for any possible Guvab, we can study and state results about the rates of convergence of $\{W_{2k}\}$ and $\{W_{2k+1}\}$.

\begin{corollary}
We have that $\lim_{k\to\infty} W_{2k}$ and $\lim_{k\to\infty} W_{2k+1}$ are always well-defined.
\end{corollary}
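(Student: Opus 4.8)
The plan is to reduce the claim about the Wasserstein distances $W_{2k}$ and $W_{2k+1}$ to the already-established convergence of the underlying distributions, and then invoke Lemma~\ref{lem: converges to stationary distance}. The key observation is that $W_{2k} = W(\mu_{2k}, \nu_{2k})$ and $W_{2k+1} = W(\mu_{2k+1}, \nu_{2k+1})$, so each of these is the Wasserstein distance of a pair of distributions whose limits we have already controlled.

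First I would apply Corollary~\ref{cor: bipartite convergence} (and the case analysis in the proof of Corollary~\ref{corolawrence: xi0,xi1 are well defined}) to conclude that both $\lim_{k\to\infty}\mu_{2k}$ and $\lim_{k\to\infty}\nu_{2k}$ exist; call them $\mu^0$ and $\nu^0$, respectively. The proof of Corollary~\ref{corolawrence: xi0,xi1 are well defined} already establishes that for any starting vertex and laziness the even-indexed subsequence of a single random walk's $k$-step measures converges (to $\tau_1$, $\pi$, or $\mathbbm{1}_u$ depending on the case), so applying this separately to $X$ and to $Y$ gives the existence of both limits. Then I would set $f(k) = 2k$, which is strictly increasing, and apply Lemma~\ref{lem: converges to stationary distance} with these limits to obtain
\begin{equation*}
\lim_{k\to\infty} W_{2k} = \lim_{k\to\infty} W(\mu_{2k}, \nu_{2k}) = W(\mu^0, \nu^0),
\end{equation*}
which in particular shows this limit is well-defined.

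The identical argument with $f(k) = 2k+1$ handles the odd-indexed subsequence: the odd subsequences $\mu_{2k+1}$ and $\nu_{2k+1}$ converge to some $\mu^1, \nu^1$ by the same case analysis, and Lemma~\ref{lem: converges to stationary distance} yields $\lim_{k\to\infty} W_{2k+1} = W(\mu^1, \nu^1)$. Since both limits exist, the corollary follows.

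I do not anticipate a genuine obstacle here, as the corollary is essentially a repackaging of Corollary~\ref{corolawrence: xi0,xi1 are well defined} combined with the continuity-type result Lemma~\ref{lem: converges to stationary distance}. The only point requiring minor care is that Corollary~\ref{corolawrence: xi0,xi1 are well defined} is phrased in terms of $\xi_{2k} = \mu_{2k} - \nu_{2k}$, so I would make sure to invoke the statement inside its proof (that $\lim \mu_{2k}$ and $\lim \mu_{2k+1}$ are well-defined for any single walk) rather than just the difference, since Lemma~\ref{lem: converges to stationary distance} requires the individual limits of $\mu_{f(k)}$ and $\nu_{f(k)}$ to exist, not merely the limit of their difference.
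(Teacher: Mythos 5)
Your proposal is correct and follows essentially the same route as the paper: the paper's one-line proof asserts $\lim_{k\to\infty} W_{2k} = W(\xi^0,\Tilde{\textbf{0}})$, which is exactly your $W(\mu^0,\nu^0)$ (the two agree since the Wasserstein distance depends only on the difference of the distributions), justified by the convergence of the individual subsequences from Corollary~\ref{corolawrence: xi0,xi1 are well defined} together with Lemma~\ref{lem: converges to stationary distance}. Your version simply makes explicit the details the paper leaves implicit, including the correct caution that Lemma~\ref{lem: converges to stationary distance} needs the individual limits rather than just the limit of the difference.
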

\begin{proof}
We know that $\lim_{k\to\infty} W_{2k} = W(\xi^0,\Tilde{\textbf{0}})$ and $\lim_{k\to\infty} W_{2k+1} = W(\xi^1,\Tilde{\textbf{0}})$.
\end{proof}

We soon discuss many cases where $\lim_{k\to\infty} W_k$ exists, so we designate a way to refer to this limit. For any Guvab $\mathcal{G}$ where $\lim_{k\to\infty} W_k$ exists, we denote by $W$ the limit $\lim_{k\to\infty} W_k$.

We can now state and prove our main theorems about whether the Wasserstein distance converges and the values it converges to. For any possible Guvab, Theorem \ref{thm: convergence condition on mildly vexing scenarios} allows us to determine whether the Wasserstein distance converges. Furthermore, Theorem \ref{thm: convergence values} allows us to, in most cases, quickly and easily determine what value the Wasserstein distance will converge to. Finally, these theorems provide a framework for us to classify the Guvabs into four categories so we can use casework to understand the rate of convergence.

\begin{theorem}\label{thm: convergence values}
Unless $G$ is bipartite, $\alpha=0$, and $\beta=1$, we have that $\displaystyle W=\lim_{k\to\infty}W(\mu_k,\nu_k)$ is always well defined, and furthermore
\begin{itemize}
    \item $W=0$ under the conditions specified in Theorem~\ref{thm: 0-convergence},
    \item $W=1$ if $\alpha=\beta=0$ and $W\neq 0$,
    \item $W=\frac{1}{2}$ if $0=\alpha< \beta < 1$ and $G$ is bipartite.
\end{itemize}
\end{theorem}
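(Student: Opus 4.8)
The plan is to reduce the entire theorem to evaluating $W(\xi^0,\Tilde{\textbf{0}})$ and $W(\xi^1,\Tilde{\textbf{0}})$, since the preceding corollary gives $\lim_{k\to\infty}W_{2k}=W(\xi^0,\Tilde{\textbf{0}})$ and $\lim_{k\to\infty}W_{2k+1}=W(\xi^1,\Tilde{\textbf{0}})$, and $W$ is well-defined precisely when these two agree. I would run a case analysis on $(\alpha,\beta)$ and on whether $G$ is bipartite, feeding in the subsequential limits of $\mu_k$ and $\nu_k$ recorded in Corollary~\ref{corolawrence: xi0,xi1 are well defined}: each of $\lim\mu_{2k},\lim\mu_{2k+1},\lim\nu_{2k},\lim\nu_{2k+1}$ is one of $\pi$, $\mathbbm{1}_u$ or $\mathbbm{1}_v$, or $\tau_1,\tau_2$. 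In every case $\xi^0,\xi^1$ become explicit zero-sum distributions, and the claim reduces to a Wasserstein computation against $\Tilde{\textbf{0}}$. Throughout I would use that $W(\cdot,\Tilde{\textbf{0}})$ is invariant under negation (from the symmetry $W(\mu,\nu)=W(\nu,\mu)$).

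The $W=0$ statement is immediate from Theorem~\ref{thm: 0-convergence}: under its hypotheses $\mu_k$ and $\nu_k$ share a common limit along each parity class, so $\xi^0=\xi^1=\Tilde{\textbf{0}}$ and both subsequential limits of $W_k$ vanish. For general well-definedness I would observe that a walk's $k$-step measure oscillates, i.e. $\lim\mu_{2k}\neq\lim\mu_{2k+1}$, only when its laziness is $0$ and $G$ is bipartite; whenever neither walk oscillates we get $\xi^0=\xi^1$ outright. The remaining situations all force $\alpha=0$ with $G$ bipartite (say $u\in S_1$), where $\lim\mu_{2k}=\tau_1$ and $\lim\mu_{2k+1}=\tau_2$, and I would split on $\beta$: for $\beta=0$ the walk $Y$ oscillates in phase with the bipartition, yielding either $\xi^0=\xi^1=\Tilde{\textbf{0}}$ or $\xi^1=-\xi^0$; for $0<\beta<1$ one again gets $\xi^1=-\xi^0$ (see the last paragraph). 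By negation-invariance, $W$ is well-defined in all of these. Only $\beta=1$, where $\nu_k\equiv\mathbbm{1}_v$ gives $\xi^0=\tau_1-\mathbbm{1}_v$ and $\xi^1=\tau_2-\mathbbm{1}_v$ with no such symmetry, escapes the argument, which is exactly the excluded case.

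For $W=1$: when $\alpha=\beta=0$, Theorem~\ref{thm: 0-convergence} shows $W\neq 0$ forces $G$ bipartite with $u,v$ on opposite sides, so (with $u\in S_1$, $v\in S_2$) we have $\xi^0=\tau_1-\tau_2$ and $\xi^1=\tau_2-\tau_1$, and it suffices to prove $W(\tau_1,\tau_2)=1$. For the lower bound I would apply Kantorovich duality to the side-indicator $\ell=\mathbbm{1}_{S_1}$, which is $1$-Lipschitz since adjacent vertices lie on opposite sides (so $\ell$ changes by exactly $1$ across each edge and by at most $1$ in general, never exceeding the distance), and which gives $\sum_w\ell(w)((\tau_1)_w-(\tau_2)_w)=1$. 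For the matching upper bound I would exhibit the explicit plan sending mass $2/\sum_{j\in G}\deg(j)$ along each edge from its $S_1$-endpoint to its $S_2$-endpoint; the marginals come out to $\tau_1$ and $\tau_2$ by the same degree computation used for $\tau_i'$, and the cost is $|E(G)|\cdot 2/\sum_{j\in G}\deg(j)=1$. By negation-invariance both $\xi^0$ and $\xi^1$ then give $W=1$. I expect this case to be the main obstacle, since it is where the honest optimal-transport work lives, both in verifying the plan's marginals and in checking $1$-Lipschitzness of the indicator.

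Finally, the $W=\tfrac12$ case ($0=\alpha<\beta<1$, $G$ bipartite) follows cheaply once I record the one-line degree identity $\pi=\tfrac12(\tau_1+\tau_2)$. With $\lim\nu_k=\pi$ (Lemma~\ref{lem: pippa the pi fairy}), this gives $\xi^0=\tau_1-\pi=\tfrac12(\tau_1-\tau_2)$ and $\xi^1=\tau_2-\pi=\tfrac12(\tau_2-\tau_1)$. By positive homogeneity of the Wasserstein distance, which is transparent from Kantorovich duality because rescaling $\mu-\nu$ by $c>0$ rescales the dual objective by $c$, I conclude $W(\xi^0,\Tilde{\textbf{0}})=W(\xi^1,\Tilde{\textbf{0}})=\tfrac12 W(\tau_1,\tau_2)=\tfrac12$, which also confirms the $\xi^1=-\xi^0$ relation promised earlier and completes the well-definedness in this subcase.
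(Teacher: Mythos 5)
Your proposal is correct, and it follows the same broad skeleton as the paper's proof (a case analysis on $(\alpha,\beta)$ and bipartiteness, feeding in the known subsequential limits of $\mu_k$ and $\nu_k$ and reducing to a handful of explicit Wasserstein computations), but several of your local arguments are genuinely different and arguably cleaner. The paper works directly with $\lim_k W(\mu_k,\nu_k)=W(\lim\mu_k,\lim\nu_k)$ via Lemma~\ref{lem: converges to stationary distance} and then computes $W(\tau_1,\tau_2)=1$ and $W(\tau_1,\pi)=W(\tau_2,\pi)=\tfrac12$ as separate, self-contained transport arguments: in each case a verbal lower bound (``all mass must cross at least one edge,'' ``half the mass must change sides'') matched by an explicit edge-by-edge plan. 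You instead (i) organize well-definedness around the identity $\xi^1=-\xi^0$ together with negation-invariance of $W(\cdot,\Tilde{\textbf{0}})$, which isolates the excluded case $G$ bipartite, $\alpha=0$, $\beta=1$ as exactly the one where that symmetry fails --- the paper handles this more implicitly by checking that the two parity limits agree case by case; (ii) certify the lower bound $W(\tau_1,\tau_2)\geq 1$ by Kantorovich duality with the side indicator $\mathbbm{1}_{S_1}$ rather than by a mass-movement argument, which makes the bound a one-line computation; and (iii) derive the $W=\tfrac12$ case from the $W=1$ case via $\pi=\tfrac12(\tau_1+\tau_2)$ and positive homogeneity, rather than redoing the transport estimate from scratch. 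What your route buys is less repeated work and a verifiable certificate for the lower bounds; what the paper's route buys is that the explicit plans and bounds for $W(\tau_i,\pi)$ are stated in a form reused verbatim in Theorem~\ref{thm: convergence condition on mildly vexing scenarios}. One small point to make explicit if you write this up: your reduction to $W(\xi^0,\Tilde{\textbf{0}})$ and $W(\xi^1,\Tilde{\textbf{0}})$ still rests on Lemma~\ref{lem: converges to stationary distance} (through the corollary you cite), so that lemma cannot be bypassed.
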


\begin{proof}
The first condition is clear by Theorem~\ref{thm: 0-convergence}. Next, we look at the case where $\alpha=\beta=0$ and $W\neq 0$. By Theorem~\ref{thm: 0-convergence}, this corresponds to the case where $G$ is bipartite and $u,v$ are on opposite sides of $G$. Without loss of generality, let $u\in S_1$ and $v\in S_2$. Then, as $k \to \infty$, we have $\mu_{2k}$ converges to $\tau_1$ and $\nu_{2k}$ converges to $\tau_2$ by Corollary~\ref{cor: bipartite convergence}. Analogously, $\mu_{2k+1}$ converges to $\tau_2$ and $\nu_{2k+1}$ converges to $\tau_1$. Thus, $\displaystyle \lim_{k \to \infty} W(\mu_k, \nu_k) = W(\tau_1, \tau_2)$. We have that $W(\tau_1, \tau_2) \geq 1$ because to get from $\tau_1$ to $\tau_2$, we must move all the mass from $S_1$ across at least one edge to $S_2$. Also, $W(\tau_1, \tau_2) \leq 1$ because we can achieve a distance of 1 by, for any given edge $ab$ with $a \in S_1$ and $b \in S_2$, moving a mass of $\displaystyle \frac{2}{\sum_{j \in G}\deg(j)}$ from $a$ to $b$. 

We now consider the case when $0 = \alpha < \beta < 1$ and $G$ is bipartite. Without loss of generality, let $u\in S_1$. Since $\alpha=0$, we have that $\lim_{k\to\infty}\mu_{2k} = \tau_1$ and $\lim_{k\to\infty}\mu_{2k+1} = \tau_2$. Since $\beta > 0$, we have that $\lim_{k\to\infty}\nu_k = \pi$. Thus, we have that $\displaystyle \lim_{k \to \infty} W(\mu_{2k}, \nu_{2k}) = W(\tau_1,\pi)$ and $\displaystyle \lim_{k \to \infty} W(\mu_{2k+1}, \nu_{2k+1}) = W(\tau_2,\pi)$. If we show that $W(\tau_1,\pi) = W(\tau_2,\pi) = \frac{1}{2}$, we will have shown the third condition. We know that $\pi$ will have half its mass on $S_1$ and half its mass on $S_2$ because $$\sum_{v \in S_1} \pi_v = \sum_{v \in S_1} \frac{\deg(v)}{\sum_{j \in G}\deg(j)} = \frac{|E(G)|}{\sum_{j \in G}\deg(j)} = \frac{1}{2}.$$ Thus, half the mass must move from $S_2$ to $S_1$, so $W(\pi, \tau_1) \geq \frac{1}{2}$. We can also achieve a distance of exactly $\frac{1}{2}$ from $\pi$ to $\tau_1$ by, for any given edge $ab$ with $a \in S_1$ and $b \in S_2$, moving $\displaystyle \frac{1}{\sum_{j \in G}\deg(j)}$ mass from $b$ to $a$. Thus, $W(\pi,\tau_1) = \frac{1}{2}$ and by an analogous argument, $W(\pi,\tau_2) = \frac{1}{2}$.

We have now considered all cases where $\alpha,\beta < 1$ and where $\alpha = \beta = 1$. The only case left is where $0 < \alpha < 1$ and $\beta = 1$. Here, $\lim_{k \to \infty} \mu_k = \pi$ and $\nu_k = \mathbbm{1}_v$ where $\mathbbm{1}_v$ is the distribution with 1 at $v$ and 0 elsewhere. Thus, $\lim_{k \to \infty} W(\mu_{k}, \nu_{k}) = W(\pi, \mathbbm{1}_v)$, which is a constant.
\end{proof}

\begin{theorem} \label{thm: convergence condition on mildly vexing scenarios}
The distance $W(\mu_k,\nu_k)$ does not converge as $k \to \infty$ if and only if $G$ is bipartite, $\alpha = 0$ and $\beta = 1$, and $$\sum_{w \in V(G)} (-1)^{\emph{\dist}(v,w)}\emph{\dist}(v,w)\deg(w) \neq 0.$$
\end{theorem}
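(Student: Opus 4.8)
The plan is to lean on Theorem~\ref{thm: convergence values}, which already establishes that $W=\lim_{k\to\infty}W(\mu_k,\nu_k)$ is well defined in every case \emph{except} when $G$ is bipartite, $\alpha=0$, and $\beta=1$. This instantly gives the ``only if'' reduction: if $W_k$ fails to converge then we must be in exactly that regime, and it remains to pin down, within the regime, precisely when non-convergence occurs. So for the rest of the argument I assume $G$ is bipartite with sides $S_1,S_2$, that $\alpha=0$, and that $\beta=1$, and I aim to show non-convergence is equivalent to the stated sum being nonzero.

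First I would record the shape of the two walks here. Since $\beta=1$, the walk $Y$ never leaves $v$, so $\nu_k=\mathbbm{1}_v$ for every $k$. Since $\alpha=0$ and $G$ is bipartite, Corollary~\ref{cor: bipartite convergence} gives $\mu_{2k}\to\tau_a$ and $\mu_{2k+1}\to\tau_b$, where $\{a,b\}=\{1,2\}$ is fixed by the side containing $u$. Feeding this into Lemma~\ref{lem: converges to stationary distance}, the even and odd subsequences of $W_k$ converge to $W(\tau_1,\mathbbm{1}_v)$ and $W(\tau_2,\mathbbm{1}_v)$ in some order. Because both subsequences converge and together exhaust all indices, the full sequence $W_k$ converges if and only if these two limits coincide; hence $W_k$ fails to converge exactly when $W(\tau_1,\mathbbm{1}_v)\neq W(\tau_2,\mathbbm{1}_v)$.

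The key computation is then to evaluate $W(\tau_i,\mathbbm{1}_v)$. Because the target $\mathbbm{1}_v$ concentrates all its mass at the single vertex $v$, the marginal constraint $\sum_{i}T(i,w)=\mathbbm{1}_v(w)$ forces $T(i,w)=0$ for every $w\neq v$, so every transportation plan sends all mass directly to $v$; the plan is unique, giving
\[
W(\tau_i,\mathbbm{1}_v)=\sum_{w\in V(G)}\tau_i(w)\,\dist(w,v)=\frac{2}{\sum_{j\in G}\deg(j)}\sum_{w\in S_i}\deg(w)\,\dist(v,w).
\]
(The same value also drops out of Kantorovich duality using the $1$-Lipschitz test function $\ell(w)=\dist(w,v)$.) Subtracting, and using that in a bipartite graph $\dist(v,w)$ is even exactly when $w$ lies on $v$'s side, so that $(-1)^{\dist(v,w)}$ equals $+1$ on $v$'s side and $-1$ on the other, I would obtain
\[
W(\tau_1,\mathbbm{1}_v)-W(\tau_2,\mathbbm{1}_v)=\pm\,\frac{2}{\sum_{j\in G}\deg(j)}\sum_{w\in V(G)}(-1)^{\dist(v,w)}\dist(v,w)\deg(w),
\]
with the overall sign determined solely by which side contains $v$.

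Since the normalizing factor is strictly positive, the two limits differ if and only if the displayed sum is nonzero, and the sign ambiguity is irrelevant to a $\neq 0$ condition. Combining this with the reduction of the first paragraph yields the claimed equivalence. The only genuinely delicate step is the sign bookkeeping in the last display: one must check that the parity of $\dist(v,w)$ really sorts vertices by side in a bipartite graph and confirm that the leading sign (which depends on $v$'s side) cannot affect whether the sum vanishes. Everything else reduces to results already in hand, namely Theorem~\ref{thm: convergence values}, Corollary~\ref{cor: bipartite convergence}, and Lemma~\ref{lem: converges to stationary distance}.
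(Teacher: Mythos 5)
Your proposal is correct and follows essentially the same route as the paper: reduce to the bipartite, $\alpha=0$, $\beta=1$ regime via Theorem~\ref{thm: convergence values}, identify the even and odd subsequential limits as $W(\tau_1,\mathbbm{1}_v)$ and $W(\tau_2,\mathbbm{1}_v)$, compute each as a degree-weighted sum of distances to $v$ over one side, and observe that their difference is (up to a positive normalization and an irrelevant overall sign) the displayed alternating sum. The only difference is cosmetic: you make the sign bookkeeping and the uniqueness of the transport plan to a point mass explicit, which the paper leaves implicit.
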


\begin{proof}
By Theorem~\ref{thm: convergence values}, we know that the only case where it is possible for $W(\mu_k,\nu_k)$ not to converge is when $G$ is bipartite, $\alpha = 0$, and $\beta = 1$. In this case, $\nu_k = \mathbbm{1}_v$. Additionally, assuming without loss of generality that $u \in S_1$, we have that 
$$\lim_{k\to\infty}\mu_{2k} = \tau_1 \text{ and } \lim_{k\to\infty}\mu_{2k+1} = \tau_2.$$ Thus, $W(\mu_k,\nu_k)$ converges as $k \to \infty$ if and only if $W(\mathbbm{1}_v,\tau_1) = W(\mathbbm{1}_v,\tau_2)$. 

To calculate $W(\mathbbm{1}_v,\tau_1)$, we note that we must move all the mass of $\tau_1$ to vertex $v$. To move all the mass at some vertex $w$ to $v$, we necessarily move a mass of $(\tau_1)_w$ over a distance of $\dist(w,v)$. Thus the total transportation cost, and thus the total Wasserstein distance $W(\mathbbm{1}_v,\tau_1)$, is given by 
\begin{align*}
    \sum_{w \in G} \dist(v,w)(\tau_1)_w &= \sum_{w \in S_1} \dist(v,w)\frac{2\deg(w)}{\sum_{j \in G}\deg(j)} + \sum_{w \in S_2} \dist(v,w)\cdot 0\\ &= \frac{2}{\sum_{j \in G}\deg(j)}\sum_{w \in S_1} \dist(v,w)\deg(w).
\end{align*}
By the same reasoning, we have that $$ W(\mathbbm{1}_v,\tau_2) = \frac{2}{\sum_{j \in G}\deg(j)}\sum_{w \in S_2} \dist(v,w)\deg(w).$$ Given that $G$ is bipartite, $\alpha = 0$, and $\beta = 1$, we know that the Wasserstein distance converges if and only if $W(\mathbbm{1}_v,\tau_1) - W(\mathbbm{1}_v,\tau_2) = 0$, which is true if and only if $$\sum_{w \in V(G)} (-1)^{\dist(v,w)}\dist(v,w)\deg(w) = 0$$
since the parity of $\dist(v,w)$ depends only on the side of $G$ that $w$ is on. Thus, the theorem statement follows.
\end{proof}

We now present a table summarizing much of the information about convergence discussed in this section.

\begin{center}
\begin{table}[H]
\small\addtolength{\tabcolsep}{-5pt}
\begin{tabular}{c|c|c|c|c|c}
 Conditions on $\mathcal{G}$ & $W = 0$ & $W = \frac{1}{2}$ & $W = 1$ & $W = C\neq 0,\frac{1}{2},1$ & $W_k$ does not converge \\ \hline
 $G$ bipartite, $\beta=1$ & $\checkmark$ & $\checkmark$ & $\checkmark$ & $\checkmark$ & $\checkmark$ \\
 $G$ bipartite, $\beta<1$ & $\checkmark$ & $\checkmark$ & $\checkmark$ & $\times$ & $\times$ \\
 $G$ non-bipartite, $\beta=1$ & $\checkmark$ & $\times$ & $\checkmark$ & $\checkmark$ & $\times$ \\
 $G$ non-bipartite, $\beta<1$ & $\checkmark$ & $\times$ & $\times$ & $\times$ & $\times$ \\
\end{tabular}
\caption{Is it possible for the Wasserstein distance to converge to particular limits in different cases of conditions on $\mathcal{G}$?}
\label{tab: The Guvable}
\end{table}
\end{center}
\vspace{-0.4in} 

\begin{remark}
We know that the case of $G$ bipartite, $\beta = 1$, and $W = \frac{1}{2}$ is possible by considering a star with $v$ at the center and $0 < \alpha < 1$. We know that the case of $G$ non-bipartite, $\beta = 1$, and $W = \frac{1}{2}$ is impossible because in order for it to be possible, $\lim_{k\to\infty}\mu_k$ would need half of its mass to be at $v$. Since mass of $\lim_{k\to\infty}\mu_k$ is proportional to degree, every edge would have to be incident to $v$, making the graph bipartite.
\end{remark}

The following corollary provides a categorization of the Guvabs into four types. In the next four sections of this paper, we examine each of these categories in turn. 

\begin{corollary} \label{cor: four guvab nations}
Each Guvab satisfies exactly one of the following four conditions: 
\begin{itemize}
    \item $W=1$ and $\beta < 1$,
    \item $W=\frac{1}{2}$ and $\beta < 1$,
    \item $W = 0$ and $\beta < 1$,
    \item $\beta = 1$.
\end{itemize}
\end{corollary}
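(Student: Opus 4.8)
The plan is to show that the four listed conditions \emph{partition} the set of all Guvabs; that is, every Guvab satisfies at least one condition (exhaustiveness) and no Guvab satisfies two (mutual exclusivity). Since the fourth condition is simply $\beta = 1$ while the first three all require $\beta < 1$, the dichotomy $\beta = 1$ versus $\beta < 1$ immediately separates the fourth condition from the other three. It therefore suffices to analyze Guvabs with $\beta < 1$ and show that for each such Guvab, $W$ is well-defined and takes exactly one of the values $0$, $\frac{1}{2}$, or $1$.

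For mutual exclusivity among the first three conditions, I would first note that when $\beta < 1$, Theorem~\ref{thm: convergence values} guarantees that $W = \lim_{k\to\infty} W(\mu_k,\nu_k)$ is well-defined, since the sole scenario in which the limit may fail to exist requires $G$ bipartite, $\alpha = 0$, and $\beta = 1$. Because $W$ is then a single real number and the target values $0$, $\frac{1}{2}$, $1$ are pairwise distinct, a Guvab with $\beta < 1$ can satisfy at most one of the first three conditions. Combined with the $\beta = 1$ versus $\beta < 1$ split, this establishes that the four conditions are mutually exclusive.

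The substance of the argument is exhaustiveness for $\beta < 1$, which I would handle by exhausting the possibilities for $\alpha$ and the bipartiteness of $G$, reading off the value of $W$ from Theorems~\ref{thm: 0-convergence} and~\ref{thm: convergence values} in each case. If $\alpha > 0$, then $0 < \alpha \leq \beta < 1$ and Theorem~\ref{thm: 0-convergence} gives $W = 0$. If $\alpha = 0$ and $\beta > 0$, then either $G$ is non-bipartite, in which case Theorem~\ref{thm: 0-convergence} again gives $W = 0$, or $G$ is bipartite, in which case the third bullet of Theorem~\ref{thm: convergence values} gives $W = \frac{1}{2}$. Finally, if $\alpha = \beta = 0$, then Theorem~\ref{thm: 0-convergence} either yields $W = 0$ (when $G$ is non-bipartite, or when $G$ is bipartite and $u,v$ lie on the same side so that an even-length path from $u$ to $v$ exists), or else $W \neq 0$, in which case the second bullet of Theorem~\ref{thm: convergence values} forces $W = 1$. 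In every case $W \in \{0, \tfrac{1}{2}, 1\}$, so the corresponding Guvab satisfies exactly one of the first three conditions.

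The main obstacle is not any deep estimate but rather the bookkeeping of verifying that this case analysis is genuinely complete: one must confirm that the three value-assignments of Theorem~\ref{thm: convergence values}, together with the $W = 0$ criteria of Theorem~\ref{thm: 0-convergence}, leave no configuration of $(\alpha, \beta, G, u, v)$ with $\beta < 1$ unaccounted for, and that no such configuration slips between the cases. Since $W$ is single-valued whenever $\beta < 1$, any apparent overlap between cases is harmless, so the only genuine risk is a gap in coverage, which the exhaustive split on whether $\alpha = 0$, on whether $\beta = 0$, and on the bipartiteness of $G$ is designed to preclude.
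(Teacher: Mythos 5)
Your proposal is correct and follows essentially the same route as the paper: split on $\beta=1$ versus $\beta<1$, then invoke Theorems~\ref{thm: 0-convergence} and~\ref{thm: convergence values} to conclude that $W$ is well-defined and lies in $\{0,\tfrac12,1\}$ whenever $\beta<1$. Your version merely spells out the case analysis on $\alpha$ and bipartiteness more explicitly than the paper's one-line argument, and adds the (routine) mutual-exclusivity observation.
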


\begin{proof}
If $\beta < 1$, we have $W=0$ under the conditions in Theorem \ref{thm: 0-convergence} and $W = 1$ or $W =\frac{1}{2}$ otherwise, since the conditions in Theorem \ref{thm: convergence values} cover all possible cases where $\beta < 1$ and $W \neq 0$.
\end{proof}

If we understand the convergence of the Wasserstein distance in all four of these cases, then we understand the convergence for all Guvabs. The subsequent four sections each discuss the convergence of the Wasserstein distance in one of these cases. Our two main convergence theorems, presented in Section 8, put together the general results obtained by examining these four cases individually.

\section{Convergence when $W = 1$}
In this section we consider Guvabs with $W = 1$ and $\beta < 1$. Recall that these are exactly the Guvabs for which $G$ is bipartite, $u$ and $v$ are on different sides of the bipartite graph, and $\alpha = \beta = 0$. We show that all such Guvabs have a Wasserstein distance that is eventually constant. We also begin to understand how long it takes for the Wasserstein distance to reach constancy.

We first recall that the Wasserstein distance between two distributions $\mu$ and $\nu$ with potentially negative entries is the cost of an optimal transportation plan for moving mass\footnote{As discussed in section 2, the mass of a distribution $\mu$ at a vertex $w$ is $\mu(w)$, the value of the distribution at that vertex.} from $\mu$ to $\nu$. Thus, to prove the eventual constancy of the Wasserstein distance, we construct an algorithm that produces a transportation plan between any two distributions. Then, we show that when certain inequalities are satisfied, this transportation plan has a cost of exactly 1 and is optimal. Finally, we prove that when $\xi_k$ is eventually sufficiently close to either of the stationary distributions $\xi^0$ or $\xi^1$, these inequalities are satisfied.

We start by constructing the algorithm.
Pick a spanning tree $T$ of $G$ and let $L$ be the set of leaves of $T$. Define a function $r: V(G)\to \mathbb{Z}$ 
such that $r(w) = \min_{\ell \in L}\dist(w,\ell)$. 

For any finite set $S$, let $\Perm(S)$ denote the set of all permutations of $S$. We say that an \textbf{$r$-monotone ordering} $\mathcal{O} = (w_1, \ldots, w_n)\in \Perm(V(G))$ is a permutation of $V(G)$ such that $r(w_1), \ldots, r(w_n)$ is a non-decreasing sequence.

\begin{definition} \label{Allie the Albatross Algorithm Fairy}
Given a graph $G$, a spanning tree $T$ of $G$, an $r$-monotone ordering $\mathcal{O}$ and zero-sum distribution $\xi$, we define the \textbf{tree-based transport algorithm}, which transports mass from $\xi$ to $\Tilde{\textbf{0}}$, to be an $(n-1)$-step algorithm in which at the $i$th step, 
\begin{itemize}
    \item if the current mass at $w_i$ is nonnegative, we distribute it evenly among all $v\sim w_i$ with indices greater than $i$,
    \item if the current mass at $w_i$ is negative, we take an equal amount of mass to vertex $w_i$ from all $v\in N(w_i)$ with indices greater than $i$, so that the mass at $w_i$ is now $0$.
\end{itemize}
\end{definition}

In Lemma \ref{aggie the agtoe fairy}, we see that this algorithm produces a valid transportation plan from $\xi$ to $\Tilde{\textbf{0}}$. We refer to this \textbf{tree-based transportation plan} as $A(G,T,\mathcal{O},\xi)$. Given $G, T$ and $\mathcal{O}$, we let $A_i(\xi)$ denote the distribution of mass on the vertices of $G$ after $i$ steps of the algorithm.

\begin{lemma}\label{aggie the agtoe fairy}
The tree-based transport algorithm on $G, T, \mathcal{O}, \xi$ always produces a valid transportation plan from $\xi$ to $\Tilde{\textbf{0}}$.
\end{lemma}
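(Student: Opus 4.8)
The plan is to verify the two substantive properties hidden in the phrase ``valid transportation plan from $\xi$ to $\Tilde{\textbf{0}}$'': first, that the algorithm is well-defined, in the sense that each of its $n-1$ steps can actually be carried out; and second, that after the final step the mass distribution is exactly $\Tilde{\textbf{0}}$, so that the record of moves genuinely transports $\xi$ to $\Tilde{\textbf{0}}$. I would then package those moves into a function $T:V(G)\times V(G)\to\mathbb{R}_{\geq0}$ satisfying the marginal conditions of Definition~\ref{def: terry the transportation plan fairy}.

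The termination claim is the part I expect to go through cleanly, and I would isolate it as an invariant: after step $i$ the mass at $w_i$ equals $0$, and $w_i$ is never touched again. The first half holds by construction (in the nonnegative case we move all of $w_i$'s mass out; in the negative case we pull in exactly enough to reach $0$). The second half holds because step $j>i$ only exchanges mass between $w_j$ and neighbors of index $>j>i$, so a vertex of index $i$ is never involved once $j>i$. Since every step merely relocates mass between adjacent vertices, the total mass is conserved and remains $\sum_{w}\xi(w)=0$ throughout. Hence after all $n-1$ steps the vertices $w_1,\dots,w_{n-1}$ all carry mass $0$, and the conserved total forces $w_n$ to carry mass $0$ as well, so $A_{n-1}(\xi)=\Tilde{\textbf{0}}$. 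To exhibit an actual transportation plan I would pass to shifted distributions $\xi+\psi\geq\Tilde{\textbf{0}}$ and $\psi\geq\Tilde{\textbf{0}}$ as in Remark~\ref{rem: addy the addition fairy}, track the provenance of each unit of (shifted) mass as it is relayed along edges across the steps, and let $T(x,y)$ be the total mass that begins at $x$ and ends at $y$; nonnegativity is then automatic, and the invariant above makes the row sums equal $(\xi+\psi)(x)$ and the column sums equal $\psi(y)$.

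The step I expect to be the main obstacle is well-definedness: at step $i<n$, if $w_i$ carries nonzero mass then distributing it to (or collecting it from) neighbors of index $>i$ is only possible if $w_i$ actually has a neighbor $w_j$ with $j>i$. Equivalently, one wants every suffix $\{w_i,\dots,w_n\}$ to induce a connected subgraph, so that no vertex is ever stranded with mass it cannot move. This is precisely where the spanning tree $T$ and the $r$-monotone ordering must be exploited, and it is the delicate point. A naive justification --- ``$w_i$ is not $r$-maximal, hence it has a neighbor of strictly larger $r$-value, which comes later'' --- does not by itself suffice, because a vertex can be a (non-unique) local maximum of $r$ with every neighbor of strictly smaller $r$-value, and two maxima of $r$ can sit at non-adjacent vertices joined only through lower-$r$ vertices; under such circumstances an arbitrary $r$-monotone ordering can strand a non-final vertex. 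So the real content is to show that one may always work with an $r$-monotone ordering whose suffixes remain connected subtrees of $T$ (peeling the tree inward from its leaves) and that this forces each non-final $w_i$ to retain a later tree-neighbor. Pinning down this connectivity property --- reconciling the definition of $r$ with the tree structure so that the process provably never stalls --- is the heart of the lemma and the part I would scrutinize most carefully.
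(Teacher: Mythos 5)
Your termination argument is precisely the paper's proof: the paper observes that after step $i$ the mass at $w_1,\dots,w_i$ is zero and is never disturbed again, so after step $n-2$ only $w_{n-1}$ and $w_n$ can carry mass, and since the total mass is zero the final step clears both. That is the entirety of the paper's argument, so on this half you and the paper coincide.

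The well-definedness issue you isolate is a genuine gap --- in your proposal, which explicitly leaves it open, but equally in the paper, which never mentions it: the paper silently assumes that each $w_i$ with nonzero mass has a neighbor of larger index, and flatly asserts that ``$w_{n-1}$ is adjacent to $w_n$.'' Your suspicion that an arbitrary $r$-monotone ordering can strand a vertex is correct. Take $G=T$ to be the tree on $\{a_1,a_2,x,m,z,y,b_1,b_2\}$ with edges $xa_1,\,xa_2,\,xm,\,mz,\,my,\,yb_1,\,yb_2$. The leaves are $a_1,a_2,z,b_1,b_2$, all with $r=0$, and $r(x)=r(m)=r(y)=1$, so $\mathcal{O}=(a_1,a_2,z,b_1,b_2,m,x,y)$ is $r$-monotone; but at step $7$ the vertex $x$ carries mass $\xi(x)+\xi(a_1)+\xi(a_2)+\tfrac12(\xi(m)+\xi(z))$, which is generically nonzero, and has no neighbor of index greater than $7$ (its neighbors are $a_1,a_2,m$), so the algorithm stalls; moreover $w_{n-1}=x$ and $w_n=y$ are not adjacent, contradicting the paper's assertion. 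So the lemma as stated fails for some $r$-monotone orderings and must be repaired by restricting to orderings whose suffixes induce connected subgraphs (e.g.\ obtained by repeatedly deleting a leaf of minimal $r$ from the remaining subtree of $T$), which is exactly the fix you sketch but do not carry out. In short: your proposal reproduces the paper's proof where the paper has one, and correctly identifies --- without closing --- a hole that the paper does not close either.
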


\begin{proof}
After the $i$th step of the tree-based transport algorithm, the mass at each of the vertices $w_1, \ldots, w_i$ is $0$, since the mass at $w_j$ becomes zero at the $j$th step, and thereafter no mass is moved to or from $w_j$. Thus, after the $(n-2)$th step, the only vertices of $G$ with nonzero mass will be $w_{n-1}$ and $w_n$. Since the total mass sums to $0$ and $w_{n-1}$ is adjacent to $w_n$, the $(n-1)$th step of the algorithm simply moves the positive mass to the negative mass so that all vertices have mass $0$.
\end{proof}

We now prove a useful property of this algorithm.
\begin{lemma} \label{lem: linear}
Given a graph $G$, tree $T$ and $r$-monotone ordering $\mathcal{O}$, for all $i$, we have that $A_i$ is a linear function on the space of zero-sum distributions.
\end{lemma}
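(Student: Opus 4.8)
The plan is to prove linearity of $A_i$ by induction on $i$, showing at each step that the map taking a zero-sum distribution $\xi$ to the intermediate distribution $A_i(\xi)$ is obtained from $A_{i-1}$ by composing with a linear operator. First I would set up the base case: $A_0$ is the identity map (zero steps of the algorithm), which is trivially linear on the space of zero-sum distributions. For the inductive step, I would assume $A_{i-1}$ is linear and analyze precisely what the $i$th step of the tree-based transport algorithm does to the distribution $A_{i-1}(\xi)$.

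The key observation is that the $i$th step is itself a linear operation on the current mass distribution. Let $m = A_{i-1}(\xi)(w_i)$ be the current mass at $w_i$. In both cases of Definition~\ref{Allie the Albatross Algorithm Fairy} (whether $m$ is nonnegative or negative), the algorithm zeroes out the mass at $w_i$ and redistributes it evenly among the neighbors $v \sim w_i$ with index greater than $i$; crucially, the \emph{set} of such neighbors does not depend on $\xi$ at all — it depends only on $G$, $T$, and $\mathcal{O}$, which are fixed. So if $S_i = \{v \sim w_i : v \text{ has index} > i\}$ and $d_i = |S_i|$, then the step sends the current value $m$ at $w_i$ to $0$ and adds $m/d_i$ to each $v \in S_i$, leaving all other vertices unchanged. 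This is exactly the linear map $A_{i-1}(\xi) \mapsto A_{i-1}(\xi) + m \cdot \left( \frac{1}{d_i}\mathbbm{1}_{S_i} - \mathbbm{1}_{w_i} \right)$, where $m$ itself is the linear functional $A_{i-1}(\xi) \mapsto A_{i-1}(\xi)(w_i)$. Hence the $i$th step is a composition of linear maps, and $A_i = (\text{step } i) \circ A_{i-1}$ is linear as the composition of linear maps.

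The one point requiring care, which I expect to be the main (though modest) obstacle, is confirming that the two cases in the definition — nonnegative versus negative mass — are genuinely described by the \emph{same} linear formula, so that there is no branching that would break linearity. I would note that in the nonnegative case we distribute $m$ evenly out to the higher-indexed neighbors, and in the negative case we pull in mass totaling $|m| = -m$ from those same higher-indexed neighbors to bring $w_i$ up to $0$; in both cases the net effect is to subtract $m$ from $w_i$ and add $m/d_i$ to each higher-indexed neighbor, which is the single formula above regardless of the sign of $m$. This unifies the casework and shows the step map is linear (not merely piecewise linear). I would also remark that $S_i$ is nonempty whenever there is mass to move, since $T$ is a spanning tree and the $r$-monotone ordering guarantees every non-final vertex has a higher-indexed neighbor in $T$, so $d_i \neq 0$ and the formula is well-defined. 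With these two points settled, the induction closes and $A_i$ is linear on the space of zero-sum distributions for every $i$.
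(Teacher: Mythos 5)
Your proof is correct and follows essentially the same route as the paper's: induction on $i$, with the key observation that the two branches of the algorithm's $i$th step (nonnegative versus negative mass at $w_i$) are governed by one and the same formula, so each step acts linearly on the current distribution. The only cosmetic difference is that the paper verifies additivity $A_{i+1}(\xi+\xi') = A_{i+1}(\xi)+A_{i+1}(\xi')$ vertex by vertex, whereas you package the step as composition with a fixed linear operator, which if anything is slightly cleaner since it gives homogeneity as well as additivity.
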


\begin{proof}
It suffices to show that for any two zero-sum distributions $\xi$ and $\xi'$, we have $A_i(\xi + \xi') = A_i(\xi) + A_i(\xi')$.
We prove this by induction on $i$.

Base case: When $i=0$, we have that $A_0(\xi + \xi') = \xi + \xi' = A_0(\xi) + A_0(\xi')$.

Inductive step: For the inductive hypothesis, we assume that $A_i(\xi + \xi') = A_i(\xi) + A_i(\xi')$. We want to show that $A_{i+1}(\xi + \xi') = A_{i+1}(\xi) + A_{i+1}(\xi')$. For any distribution $\xi$, if $n$ denotes the number of neighbors of $w_{i+1}$ with indices greater than $i+1$, then all of the following are true:
\begin{itemize}
    \item $A_{i+1}(\xi)(w_{i+1}) = 0$,
    \item for $w_j\in N(w_{i+1})$ with $j > i+1$, we have $A_{i+1}(\xi)(w_j) = A_i(\xi)(w_j) + \frac{1}{n} A_i(\xi)(w_{i+1})$,
    \item for all other vertices $w$, we have $A_{i+1}(\xi)(w) = A_i(\xi)(w)$.
\end{itemize}

Thus, $A_{i+1}(\xi + \xi')(w_{i+1}) = 0 = 0 +0 = A_{i+1}(\xi)(w_{i+1}) + A_{i+1}(\xi')(w_{i+1})$. For $w_j\in N(w_{i+1})$ with $j > i+1$, we have that 
\begin{align*}
A_{i+1}(\xi + \xi')(w_j) &= A_i(\xi + \xi')(w_j) + \frac{1}{n} A_i(\xi + \xi')(w_{i+1})\\
&= A_i(\xi)(w_j) + \frac{1}{n} A_i(\xi)(w_{i+1}) + A_i(\xi')(w_j) + \frac{1}{n} A_i(\xi')(w_{i+1})\\
&= A_{i+1}(\xi)(w_j) + A_{i+1}(\xi')(w_j).
\end{align*}
Finally, for all other vertices $w$, we have that 
$$A_{i+1}(\xi + \xi')(w) = A_i(\xi + \xi')(w) = A_i(\xi)(w) + A_i(\xi')(w) = A_{i+1}(\xi)(w) + A_{i+1}(\xi')(w).$$ We have shown $A_{i+1}(\xi + \xi') = A_{i+1}(\xi) + A_{i+1}(\xi')$ for all the vertices, so we have proven the inductive step and thus the lemma.
\end{proof}

In Definition \ref{def: inequalities} and the subsequent results, we define the inequalities used in conjunction with the tree-based transport algorithm and show that when these inequalities are satisfied, the Wasserstein distance between $\xi$ and $\Tilde{\textbf{0}}$ will be 1.

\begin{definition} \label{def: inequalities}
For any graph $G$, zero-sum distribution $\xi$, spanning tree $T$, and $r$-monotone ordering $\mathcal{O} = (w_1,\ldots w_n)$ on $V(G)$, define the \textbf{tree-based transport inequalities} $\mathcal{I}(G,T,\mathcal{O},\xi)$ to be the union of the following two sets of inequalities: 

\begin{itemize}
    \item $I_1$: the set of inequalities of the form $\xi(w_j)A_i(\xi)(w_j) > 0$ for all $0 \leq i \leq |V(G)|-2$ and $i < j \leq V(G)$,
    \item $I_2$: the set of inequalities of the form $\xi(t)\xi(w) < 0$ for all $t \sim w$
\end{itemize}
\end{definition}

\begin{lemma} \label{lem: dist is sum of pos mass}
If the tree-based transport inequalities $\mathcal{I}(G,T,\mathcal{O},\xi)$ are satisfied, then the cost of the transportation plan  is at most the sum of positive mass in $\xi$, i.e., $\displaystyle C(A(G,T,\mathcal{O},\xi))\leq\frac{1}{2} \sum_{w \in G} |\xi(w)|.$
\end{lemma}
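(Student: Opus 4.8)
The plan is to bound the cost of $A(G,T,\mathcal{O},\xi)$ by the total amount of mass the algorithm moves, and then show that this total is exactly the positive mass of $\xi$. Since every elementary move in the tree-based transport algorithm sends mass between adjacent vertices, which are at distance $1$, the cost $C(A(G,T,\mathcal{O},\xi))$ of the resulting plan is at most the sum, over all $n-1$ steps, of the mass transported at that step (by the triangle inequality applied to the net flows). Moreover, because $\xi$ is zero-sum, the total positive mass $\sum_{\xi(w)>0}\xi(w)$ equals $\frac12\sum_{w\in G}|\xi(w)|$, so it suffices to show that the total mass moved is at most this positive mass.

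First I would record the structural content of the two inequality families. Taking $i=0$ in $I_1$ gives $\xi(w_j)^2>0$, so every vertex carries nonzero mass. The inequalities $I_2$ say that adjacent vertices carry masses of opposite sign; hence the sign of $\xi$ is a proper $2$-coloring of $G$, and every edge joins a positive-mass vertex to a negative-mass vertex. The inequalities $I_1$ say that, for each vertex $w_j$ and each step $i<j$ before $w_j$ is processed, the mass $A_i(\xi)(w_j)$ has the same sign as $\xi(w_j)$; that is, no vertex ever flips the sign of its mass before it is zeroed out.

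The heart of the argument is a potential/accounting computation. Let $\Phi_i=\sum_{w}\max(A_i(\xi)(w),0)$ be the total positive mass after $i$ steps, so $\Phi_0$ is the positive mass of $\xi$ and $\Phi_{n-1}=0$. I claim that the work done at step $i$ equals $\Phi_{i-1}-\Phi_i$. When $w_i$ has positive mass, that mass is pushed onto its higher-indexed neighbors, which are all negative by $I_2$ and remain negative by $I_1$; thus the only net change to $\Phi$ is the removal of the mass leaving $w_i$, so the work equals $\Phi_{i-1}-\Phi_i$. When $w_i$ has negative mass, mass is pulled in from its higher-indexed neighbors, which are positive by $I_2$ and remain positive by $I_1$; so $\Phi$ decreases by exactly the amount pulled, again equal to the work. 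Summing over $i$ telescopes to give total work $=\Phi_0-\Phi_{n-1}$, which is the positive mass of $\xi$; combining with the first paragraph yields $C(A(G,T,\mathcal{O},\xi))\le\frac12\sum_{w\in G}|\xi(w)|$.

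I expect the main obstacle to be exactly this last accounting step: one must use $I_1$ and $I_2$ in tandem to be sure that moving mass never creates positive mass elsewhere, i.e. that mass pushed into a negative vertex stays nonpositive and mass pulled from a positive vertex leaves it nonnegative, so that the positive-mass potential drops by exactly the amount transported at each step rather than by less. Equivalently, the inequalities guarantee that each unit of positive mass crosses exactly one edge over the entire run and is never relayed through an intermediate vertex, which is precisely what prevents the accumulated cost from exceeding the positive mass.
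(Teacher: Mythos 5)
Your proof is correct and rests on the same two structural facts that the paper extracts from the inequalities --- $I_2$ forces adjacent vertices of $\xi$ to carry opposite signs, and $I_1$ forces the sign at each vertex to persist until that vertex is zeroed out --- but the accounting is organized genuinely differently. The paper argues parcel-by-parcel about the mass itself: it shows that every movement of positive mass originates at a vertex $w$ with $\xi(w)>0$, that no mass ever arrives at such a vertex (its neighbors stay nonpositive throughout the run), and hence that each unit of positive mass crosses exactly one edge in total; the bound then follows from (mass moved) times (distance $1$). You instead run a potential argument on the aggregate $\Phi_i=\sum_{w}\max(A_i(\xi)(w),0)$, using $I_1$ and $I_2$ to check that mass pushed into a negative vertex keeps it nonpositive and mass pulled from a positive vertex keeps it nonnegative, so that the work at step $i$ is exactly $\Phi_{i-1}-\Phi_i$, and then telescoping. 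The two arguments invoke the inequalities at exactly the same places, and both correctly reduce the cost bound to ``total mass transported across single edges.'' Yours buys a cleaner formalization --- it replaces the paper's somewhat informal bookkeeping of ``mass that has not yet moved'' with a single telescoping identity, and in fact shows the total transported mass \emph{equals} the positive mass of $\xi$ rather than merely bounding it --- while the paper's version makes the underlying geometric picture (each parcel of positive mass is absorbed by the first negative vertex it reaches and is never relayed) more explicit.
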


\begin{proof}
We note that the inequalities in $I_1$ mean that for any vertex $w_j$, the sign of $A_i(\xi)(w_j)$ stays the same until the mass becomes 0 at the $j$th step, at which point it remains 0 for the rest of the algorithm. 

Since only positive mass moves, it suffices to show that all the positive mass of $\xi$ moves a distance of at most 1. At each step of the tree-based transport algorithm, any mass that moves must move a distance of exactly 1. Thus, it suffices to show that all mass moves at most one time in $A(G,T,\mathcal{O},\xi)$. 

To show this, we demonstrate that the mass that moves at each step of the algorithm has not moved before, since this means all mass moves at most once overall. We begin by demonstrating that every time positive mass moves, it moves from a vertex $w$ for which $\xi(w) > 0$. 

The only way for mass to move is via the $i$th step of the tree-based transport algorithm, which starts from the distribution $A_{i-1}(\xi)$. Suppose the vertices are $w_1,\ldots w_n$. If $A_{i-1}(\xi)(w_i)$ is zero, then no mass moves on the $i$th step. If $A_{i-1}(\xi)(w_i)$ is positive, then at the $i$th step, mass moves away from $w_i$. In addition, by the inequalities in $I_1$, if $A_{i-1}(\xi)(w_i)$ is positive then $\xi(w_i)$ is positive and if $A_{i-1}(\xi)(w_i)$ is negative then $\xi(w_i)$ is negative. Thus, by the inequalities in $I_2$, we have $\xi(t) > 0$ for all $t \sim w_i$. If $A_{i-1}(\xi)(w_i)$ is negative, mass moves from these $t \sim w_i$ to $w_i$, so all mass movements are from a vertex $t$ for which $\xi(t) > 0$. Thus, in all three of these cases, every time positive mass moves, it moves from a vertex $w$ for which $\xi(w) > 0$.

Thus, consider the $i$th vertex, call this $w$, and suppose that $\xi(w) > 0$. Then, by the inequalities in $I_2$, we know $\xi$ has negative mass at the neighbors of $w$, so throughout all steps of the algorithm, the mass at the neighbors of $w$ was nonpositive. This means that anytime executing a step for one of the neighbors of $w$ changed the mass at $w$, mass moved from $w$ to its neighbors. Because no mass moved from another vertex to $w$, any remaining positive mass at $w$ has not yet moved. We also know that the remaining mass at $w$ is always nonnegative by the inequalities in $I_1$. Thus, whenever we execute a step of the algorithm for one of the neighbors of $w$, the nonnegative mass that moves from $w$ has not yet moved. 

Furthermore, during the $i$th step of the algorithm, all the remaining nonnegative mass at $w$ moves away from it, and this mass has not yet moved. Mass movements due to steps of the algorithm for neighbors of $w$ and due to the $i$th step, which is for $w$, make up all possible movements of the mass initially at $w$. This argument holds for all vertices $w$ for which $\xi(w) > 0$, so all possible movements of positive mass move mass that has not been moved before. Thus, we are done. 
\end{proof}

\begin{corollary} \label{cor: when ineqs, dist is sum of pos mass}
For any graph $G$ and zero-sum distribution $\xi$, if for some $T$ and $\mathcal{O}$ the tree-based transport inequalities $\mathcal{I}(G,T,\mathcal{O},\xi)$ are satisfied, then 
$$\displaystyle W(\xi,0) = \frac{1}{2} \sum_{w \in G} |\xi(w)|.$$
\end{corollary}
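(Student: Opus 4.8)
The plan is to prove the claimed equality by sandwiching $W(\xi,\Tilde{\textbf{0}})$ between matching upper and lower bounds. The upper bound is essentially already in hand: by Lemma~\ref{aggie the agtoe fairy} the tree-based transport algorithm produces a valid transportation plan $A(G,T,\mathcal{O},\xi)$ from $\xi$ to $\Tilde{\textbf{0}}$, so by the definition of the Wasserstein distance as the minimum cost over all transportation plans we have $W(\xi,\Tilde{\textbf{0}}) \leq C(A(G,T,\mathcal{O},\xi))$. Combining this with Lemma~\ref{lem: dist is sum of pos mass}, which uses the hypothesis that the inequalities $\mathcal{I}(G,T,\mathcal{O},\xi)$ are satisfied, immediately yields $W(\xi,\Tilde{\textbf{0}}) \leq \frac{1}{2}\sum_{w\in G}|\xi(w)|$.

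It remains to prove the matching lower bound $W(\xi,\Tilde{\textbf{0}}) \geq \frac{1}{2}\sum_{w\in G}|\xi(w)|$. I would obtain this via Kantorovich duality, which expresses $W(\xi,\Tilde{\textbf{0}})$ as the maximum of $\sum_{w} \ell(w)\xi(w)$ over all $1$-Lipschitz functions $\ell$, so that exhibiting a single good test function suffices. I would take $\ell$ to be the indicator of the positive part of $\xi$, namely $\ell(w) = 1$ when $\xi(w) > 0$ and $\ell(w) = 0$ otherwise. The key check is that $\ell$ is $1$-Lipschitz: the values of $\ell$ differ by at most $1$ between any two vertices, and since distinct vertices are at graph distance at least $1$, the condition $|\ell(w_1) - \ell(w_2)| \leq \dist(w_1,w_2)$ holds in every case. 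Evaluating the duality sum then gives $\sum_w \ell(w)\xi(w) = \sum_{w : \xi(w) > 0} \xi(w)$.

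Finally, I would invoke the hypothesis that $\xi$ is a zero-sum distribution to identify this quantity with the target value. Writing $P$ for the total positive mass and $N$ for the total absolute negative mass of $\xi$, the zero-sum condition forces $P = N$, whence $\sum_{w:\xi(w)>0}\xi(w) = P = \frac{1}{2}(P + N) = \frac{1}{2}\sum_{w\in G}|\xi(w)|$. This establishes the lower bound, and combining it with the upper bound from the first paragraph gives the desired equality. I expect the only real subtlety to be the verification that the chosen $\ell$ is genuinely $1$-Lipschitz, together with the structural observation that, although the lower bound in fact holds for \emph{every} zero-sum $\xi$, the matching upper bound is precisely where the tree-based transport inequalities $\mathcal{I}(G,T,\mathcal{O},\xi)$ are needed; they do no work in the lower bound.
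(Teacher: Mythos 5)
Your proposal is correct, and its overall structure matches the paper's: both obtain the upper bound $W(\xi,\Tilde{\textbf{0}}) \leq \frac{1}{2}\sum_{w}|\xi(w)|$ by citing Lemma~\ref{lem: dist is sum of pos mass} together with the validity of the tree-based transportation plan, and both observe that the tree-based transport inequalities are needed only for this direction. The one place you diverge is the lower bound. The paper argues primally and very briefly: every unit of positive mass must be moved to some other vertex (since the target is the zero distribution), hence travels distance at least $1$, so the cost is at least the total positive mass, which equals $\frac{1}{2}\sum_w |\xi(w)|$ by the zero-sum hypothesis. You instead pass to the dual, taking $\ell$ to be the indicator of $\{w : \xi(w) > 0\}$, checking it is $1$-Lipschitz because its values differ by at most $1$ while distinct vertices are at distance at least $1$, and evaluating $\sum_w \ell(w)\xi(w) = P = \frac{1}{2}(P+N)$. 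Both arguments are sound; your dual version is arguably the more airtight of the two given that the paper extends $W$ to signed distributions by shifting (so "mass must move" requires a word of interpretation), whereas the paper's version is more elementary and avoids invoking Kantorovich duality for what is a one-line fact. Either is acceptable, and your closing structural remark — that the lower bound is unconditional in $\xi$ while the inequalities do all their work in the upper bound — is exactly right.
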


\begin{proof}
By Lemma \ref{lem: dist is sum of pos mass}, we have that $\frac{1}{2} \sum_{w \in G} |\xi(w)|$, the sum of positive mass, is the upper bound. For the lower bound, we note that all positive mass must move because we only move positive mass. Thus, all positive mass must move at least a distance of 1, so $W(\xi,0)$ will be at least the sum of positive mass.
\end{proof}

\begin{corollary} \label{cor: when ineqs, dist is 1}
For a Guvab $\mathcal{G}$ where $W = 1$ and $\beta < 1$, suppose that there exists some spanning tree $T$ of $G$ and $r$-monotone ordering $\mathcal{O}$ such that $\xi_k$ satisfies the tree-based transport inequalities $\mathcal{I}(G,T,\mathcal{O},\xi_k)$. Then $W_k(\mathcal{G}) = 1$.
\end{corollary}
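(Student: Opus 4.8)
The plan is to reduce the statement to the already-proved Corollary~\ref{cor: when ineqs, dist is sum of pos mass} and then compute a total-variation sum explicitly. First I would recall that, by the extension of the Wasserstein distance to signed distributions together with Remark~\ref{rem: addy the addition fairy}, we have $W_k(\mathcal{G}) = W(\mu_k,\nu_k) = W(\xi_k,\Tilde{\textbf{0}})$, so it suffices to evaluate $W(\xi_k,\Tilde{\textbf{0}})$. Since the hypothesis supplies a spanning tree $T$ and an $r$-monotone ordering $\mathcal{O}$ for which $\xi_k$ satisfies the tree-based transport inequalities $\mathcal{I}(G,T,\mathcal{O},\xi_k)$, I can apply Corollary~\ref{cor: when ineqs, dist is sum of pos mass} verbatim to conclude that $W(\xi_k,\Tilde{\textbf{0}}) = \tfrac{1}{2}\sum_{w\in G}|\xi_k(w)|$.

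It then remains to show that $\tfrac{1}{2}\sum_{w\in G}|\xi_k(w)| = 1$. Here I would invoke the characterization of this category recalled at the start of the section: a Guvab with $W=1$ and $\beta<1$ has $G$ bipartite with sides $S_1,S_2$, starting vertices $u,v$ on opposite sides, and $\alpha=\beta=0$. Because both walks are non-lazy on a bipartite graph, the parity of the step count determines which side is occupied: for each $k$, $\mu_k$ is supported entirely on one side and $\nu_k$ entirely on the other, and these are opposite sides since $u$ and $v$ start on opposite sides. Consequently $\mu_k$ and $\nu_k$ have disjoint supports, so $\xi_k=\mu_k-\nu_k$ agrees with $\mu_k\ge 0$ on one side and with $-\nu_k\le 0$ on the other. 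Therefore $\sum_{w\in G}|\xi_k(w)| = \sum_{w\in G}\mu_k(w) + \sum_{w\in G}\nu_k(w) = 1+1 = 2$, whence $W_k(\mathcal{G}) = 1$.

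I expect essentially no obstacle here: the substantive work, namely bounding the transport cost above by the positive mass of $\xi_k$ and matching it with the trivial lower bound, has already been carried out in Lemma~\ref{lem: dist is sum of pos mass} and Corollary~\ref{cor: when ineqs, dist is sum of pos mass}. The only point requiring care is the disjoint-support claim, which follows from the bipartite parity structure of a zero-laziness walk together with $u,v$ lying on opposite sides; once that is in place, the total-variation sum is forced to equal $2$ and the result is immediate.
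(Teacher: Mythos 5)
Your proposal is correct and follows essentially the same route as the paper's own proof: both reduce to Corollary~\ref{cor: when ineqs, dist is sum of pos mass} and then use the fact that in this category $G$ is bipartite with $\alpha=\beta=0$ and $u,v$ on opposite sides, so $\mu_k$ and $\nu_k$ have disjoint supports and $\sum_{w\in G}|\xi_k(w)|=2$. Your write-up simply spells out the parity argument for disjoint supports in slightly more detail than the paper does.
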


\begin{proof}
Recall that when $W = 1$ and $\beta < 1$, we must have that $G$ is bipartite, $u$ and $v$ are on different sides of the bipartite graph, and $\alpha = \beta = 0$. Thus, for all $k$ we have that $\mu_k$ and $\nu_k$ are nonzero on disjoint sets of vertices, since at all times $\mu_k$ is nonzero only on one side and $\nu_k$ is nonzero only on the other side. Thus $\sum_{w \in G} |\xi(w)| = 2$, so by Corollary \ref{cor: when ineqs, dist is sum of pos mass} we have that $$W_k(\mathcal{G}) = W(\xi_k,0) = \frac{1}{2} \sum_{w \in G} |\xi(w)| = 1.$$
\end{proof}

Now all that remains to be shown is that once $\xi_k$ is sufficiently close to either of the stationary distributions $\xi^0$ or $\xi^1$, the tree-based transport inequalities $\mathcal{I}(G,T,\mathcal{O},\xi_k)$ will be satisfied. To prove this, we will first show that $\xi^0$ and $\xi^1$ lie on the interior of the region of distributions that satisfy the inequalities. The next lemma helps show that $\xi^0$ and $\xi^1$ satisfy the inequalities.

\begin{lemma} \label{lem: algorithm nice on stat dib}
Suppose we have a bipartite graph $G$ with sides $S_0$ and $S_1$ and a distribution $\xi$ such that for $w \in S_0$ we have $\xi(w) = \frac{\deg(w)}{|E(G)|}$ and for $w \in S_1$ we have $\xi(w) = -\frac{\deg(w)}{|E(G)|}$. Then pick an arbitrary spanning tree T and $r$-monotone ordering $\mathcal{O}$ on $V(G)$. Consider the tree-based transport plan $A(G,T,\mathcal{O},\xi)$. After each step, for each $i$ for $i \leq n-2$, we have that $A_i(\xi)(w_j) \geq \frac{1}{|E(G)|}$ for $w_j\in S_0$ with $j > i$ and that $A_i(\xi)(w_j) \leq -\frac{1}{|E(G)|}$ for $w_j\in S_1$ with $j > i$.
\end{lemma}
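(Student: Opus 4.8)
The plan is to prove a single exact bookkeeping identity for the run of the algorithm and then read the stated bounds off from it. Writing $\mathcal{O}=(w_1,\dots,w_n)$ for the $r$-monotone ordering, let $d_j^{(i)}$ denote the number of neighbors of $w_j$ whose index exceeds $i$. I claim that for every $i$ with $0\le i\le n-1$ and every $j>i$ one has $A_i(\xi)(w_j)=\frac{d_j^{(i)}}{|E(G)|}$ if $w_j\in S_0$ and $A_i(\xi)(w_j)=-\frac{d_j^{(i)}}{|E(G)|}$ if $w_j\in S_1$. Granting this, the lemma is immediate: for $i\le n-2$ and $j>i$ the two desired inequalities hold precisely as soon as $d_j^{(i)}\ge 1$, i.e. as soon as the still-unprocessed vertex $w_j$ has at least one neighbor of larger index.

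To prove the identity I would induct on $i$. The base case $i=0$ is exactly the definition of $\xi$, since $d_j^{(0)}=\deg(w_j)$. For the inductive step, assume the formula after step $i-1$ and consider processing $w_i$. By the inductive hypothesis its current mass is $\pm d_i^{(i)}/|E(G)|$ (using $d_i^{(i-1)}=d_i^{(i)}$, as $w_i$ is not its own neighbor), the sign being $+$ on $S_0$ and $-$ on $S_1$ (and the mass is $0$ if $d_i^{(i)}=0$); hence on $S_0$ the mass is nonnegative and the algorithm of Definition~\ref{Allie the Albatross Algorithm Fairy} distributes it, while on $S_1$ it is nonpositive and the algorithm pulls mass in. If $w_i\in S_0$, its mass $d_i^{(i)}/|E(G)|$ is spread equally over its $d_i^{(i)}$ higher-indexed neighbors, so each receives exactly $1/|E(G)|$; crucially, bipartiteness forces all of these neighbors into $S_1$, so such a $w_j$ moves from $-d_j^{(i-1)}/|E(G)|$ to $-\big(d_j^{(i-1)}-1\big)/|E(G)|=-d_j^{(i)}/|E(G)|$, the $-1$ accounting for the now-processed neighbor $w_i$. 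Vertices not adjacent to $w_i$ are untouched and also have $d_j^{(i)}=d_j^{(i-1)}$, so the formula persists; the case $w_i\in S_1$ is identical with signs reversed. (Linearity of the maps $A_i$, Lemma~\ref{lem: linear}, lets me deduce the $S_1$ half of the conclusion from the $S_0$ half by replacing $\xi$ with $-\xi$, which interchanges the two sides.)

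The main obstacle is the final inequality $d_j^{(i)}\ge 1$ for all $i\le n-2$ and all $j>i$: I must rule out that a not-yet-processed vertex has all of its neighbors already processed before the penultimate step. Equivalently, every proper suffix $\{w_{i+1},\dots,w_n\}$ of the ordering should induce a connected subgraph, so that each of its vertices except the last is joined to a later one. This is exactly where the spanning tree $T$ and the $r$-monotone requirement must do the work: the vertices of largest $r$ are processed last, and one wants that moving from any vertex toward the more ``central'' (larger-$r$) part of $T$ always reaches a neighbor of strictly larger index. Controlling how the min-distance-to-a-leaf function $r$ behaves along tree paths well enough to keep these top suffixes connected is the delicate point, and it is the step on which the entire argument for the $W=1$ case rests.
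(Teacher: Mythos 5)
Your reduction of the lemma to the exact identity $A_i(\xi)(w_j)=\pm\,d_j^{(i)}/|E(G)|$ is precisely the paper's argument: the paper defines $G_i$ to be the subgraph induced on $\{w_{i+1},\dots,w_n\}$ and proves $A_i(|E(G)|\xi)(w_j)=\pm\deg_{G_i}(w_j)$ by the same induction, with the same use of bipartiteness to track signs, so that part of your proposal is correct and identical in substance. The genuine gap is the one you name yourself and then leave open: deducing the stated bounds from the identity requires $d_j^{(i)}\ge 1$ for all $i\le n-2$ and $j>i$, equivalently that every vertex other than $w_n$ has a neighbor of strictly larger index, and you do not prove this. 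An argument that ends by declaring this ``the delicate point on which the entire argument rests'' has not proved the lemma.

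Moreover, your worry is well founded: the needed property can fail for the ordering as defined, because $r(w)=\min_{\ell\in L}\dist(w,\ell)$ can have two non-adjacent strict local maxima which are also its only global maxima. Take $G=T$ to be the path $v_0v_1\cdots v_6$ with a pendant vertex $p$ attached to $v_3$; the leaves are $v_0,v_6,p$, so $r(v_2)=r(v_4)=2$ while every other vertex has $r\le 1$. Every $r$-monotone ordering therefore ends with $v_2,v_4$, which are not adjacent, so $\deg_{G_{n-2}}(v_2)=\deg_{G_{n-2}}(v_4)=0$ and, by the identity, $A_{n-2}(\xi)(v_2)=0$, contradicting the claimed bound of at least $1/|E(G)|$. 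For what it is worth, the paper's own proof makes the same unjustified leap --- it asserts that establishing $A_i(|E(G)|\xi)(w_j)=\pm\deg_{G_i}(w_j)$ ``suffices'' without checking that these degrees are positive --- so you have located a real defect in the construction rather than merely failing to finish. The natural repair is to order the vertices by decreasing depth from a fixed root of $T$ rather than by increasing distance to the leaf set: then every vertex except the root has its tree-parent strictly later in the order, which gives $d_j^{(i)}\ge 1$ exactly where it is needed, and the induction you (and the paper) carry out goes through verbatim.
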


\begin{proof}
We know by Lemma \ref{lem: linear} that for all $w \in G$ and for all $0 \leq i \leq n-2$, we have $A_i(|E(G)|\xi)(w) = |E(G)|((A_i(\xi)(w))$. Thus, it suffices to show that after all steps $i$ for $i \leq n-2$, we have that $A_i(|E(G)|\xi)(w_j) \geq 1$ for $w_j\in S_0$ with $j > i$ and that $A_i(|E(G)|\xi)(w_j) \leq -1$ for $w_j\in S_1$ with $j > i$.

To prove this, for all $0 \leq i \leq n-2$, we define the graph $G_i$ to consist of the vertex set $V(G_i) = \{w_{i+1},\ldots w_n\}$ and all the edges of $E(G)$ that have both endpoints in $V(G_i)$. It suffices to show by induction on $i$ that for $i \leq n-2$, we have $A_i(|E(G)|\xi)(w_j) = \deg_{G_i}w_j$ for $w_j\in S_0$ with $j > i$ and we have $A_i(|E(G)|\xi)(w_j) = -\deg_{G_i}w_j$ for $w_j\in S_1$ with $j > i$.

Base case: When $i=0$, we note that $G_0 = G$. When $i = 0$, by the definition of $\xi$, we have that $A_i(|E(G)|\xi)(w_j) = \deg_{G}w_j$ for $w_j\in S_0$ with $j > i$ and that $A_i(|E(G)|\xi)(w_j) = -\deg_{G}w_j$ for $w_j\in S_1$ with $j > i$.

Inductive step: The inductive hypothesis is that $A_i(|E(G)|\xi)(w_j) = \deg_{G_i}w_j$ for $w_j\in S_0$ with $j > i$ and that $A_i(|E(G)|\xi)(w_j) = -\deg_{G_i}w_j$ for $w_j\in S_1$ with $j > i$. Given that this is true for $i-1$, we want to show that it is true for $i$. 

We suppose that $w_i\in S_0$; the case where $w_i\in S_1$ will proceed analogously. After $i-1$ steps, $w_i$ has a mass of $\deg_{G_{i-1}}w_i$. During the $i$th step, this mass is distributed evenly among $w_j\sim w_i$ with $j > i$; we note that there are exactly $\deg_{G_{i-1}}w_i$ of these neighbors. Thus, each $w_j$ will receive $+1$ mass. By the inductive hypothesis we have that before step $i$, each of these neighbors $w_j$ had $-\deg_{G_{i-1}}w_j$ mass, since each of the neighbors of $w_i$ is in $S_1$, the opposite side of the bipartite graph. Then, after step $i$, each $w_j$ has mass $-(\deg_{G_{i-1}}w_j - 1)$, and the remaining vertices with indices greater than $i$ have the same mass as before. We note that for all $\ell > i$, if $w_{\ell} \sim w_i$ then $\deg_{G_i}w_\ell = \deg_{G_{i-1}}w_\ell - 1$ because the edge $\{(w_\ell, w_i\}$ is being removed, and otherwise $\deg_{G_i}w_\ell = \deg_{G_{i-1}}w_\ell$. We have just shown that this is exactly the mass at all vertices with indices greater than $i$ after the $i$th step of the algorithm. Thus, at each vertex $w_\ell$ with $\ell > i$, we have that for $w_\ell\in S_0$, the mass at $w_\ell$ after $i$ steps is $\deg_{G_i}w_\ell$ and for $w_\ell\in S_1$, the mass at $w_\ell$ after $i$ steps is $-\deg_{G_i}w_\ell$. We proceed analogously in the case where $w_i\in S_1$. This proves the inductive hypothesis, and therefore proves the lemma.
\end{proof}

We are now ready to show that $\xi^0$ and $\xi^1$ lie on the interior of the region of distributions that satisfy the inequalities.

\begin{corollary}\label{cor: xi interior}
For any Guvab $\mathcal{G}$ where $W = 1$ and $\beta < 1$, we have that $\xi^0$ and $\xi^1$ lie strictly on the interior of the region $R \subset \mathbb{R}^{|V(G)|}$ of distributions $\xi$ that satisfy the tree-based transport inequalities $\mathcal{I}(G,T,\mathcal{O},\xi)$.
\end{corollary}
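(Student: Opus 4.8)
The plan is to show that $\xi^0$ and $\xi^1$ satisfy \emph{every} inequality in $\mathcal{I}(G,T,\mathcal{O},\xi)$ strictly; interior membership will then follow from a soft continuity argument. First I would compute $\xi^0$ and $\xi^1$ explicitly. Since $W=1$ and $\beta<1$ force $\alpha=\beta=0$ with $G$ bipartite and $u,v$ on opposite sides (say $u\in S_1$, $v\in S_2$), Corollary~\ref{cor: bipartite convergence} gives $\mu_{2k}\to\tau_1$, $\mu_{2k+1}\to\tau_2$, $\nu_{2k}\to\tau_2$, $\nu_{2k+1}\to\tau_1$, so $\xi^0=\tau_1-\tau_2$ and $\xi^1=\tau_2-\tau_1=-\xi^0$. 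Unpacking the definition of $\tau_i$ and using $\sum_{j\in G}\deg(j)=2|E(G)|$ yields $\xi^0(w)=\frac{\deg(w)}{|E(G)|}$ for $w\in S_1$ and $\xi^0(w)=-\frac{\deg(w)}{|E(G)|}$ for $w\in S_2$. This is exactly the distribution treated in Lemma~\ref{lem: algorithm nice on stat dib} (taking its $S_0=S_1$), and $\xi^1$ is the same distribution with the two sides interchanged.

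Next I would check the two families of inequalities. For $I_2$, every edge $t\sim w$ of the bipartite graph joins the two sides, and since $G$ is connected every vertex has positive degree, so $\xi^0(t)$ and $\xi^0(w)$ have strictly opposite signs and $\xi^0(t)\xi^0(w)<0$. For $I_1$, I would invoke Lemma~\ref{lem: algorithm nice on stat dib}: for every $i\le |V(G)|-2$ and every $j>i$ it guarantees $A_i(\xi^0)(w_j)\ge\frac{1}{|E(G)|}>0$ when $w_j\in S_1$ and $A_i(\xi^0)(w_j)\le-\frac{1}{|E(G)|}<0$ when $w_j\in S_2$; as $\xi^0(w_j)$ carries precisely the same sign in each case, the product $\xi^0(w_j)A_i(\xi^0)(w_j)$ is strictly positive. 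The identical argument applied to $\xi^1$, with the roles of $S_1$ and $S_2$ swapped, disposes of that case.

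Finally, to upgrade ``satisfies the strict inequalities'' to ``lies on the interior of $R$,'' I would note that $\mathcal{I}(G,T,\mathcal{O},\xi)$ is a finite collection of strict inequalities whose left-hand sides, $\xi\mapsto\xi(w_j)A_i(\xi)(w_j)$ and $\xi\mapsto\xi(t)\xi(w)$, are continuous in $\xi$ — the $A_i$ are continuous because they are linear by Lemma~\ref{lem: linear}. Hence $R$ is (relatively) open in the hyperplane of zero-sum distributions, so any distribution satisfying all of the inequalities strictly, as $\xi^0$ and $\xi^1$ do, is an interior point of $R$.

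I expect the only genuine content to sit in the second paragraph's appeal to Lemma~\ref{lem: algorithm nice on stat dib}: that lemma provides the uniform bound $\tfrac{1}{|E(G)|}$ on the intermediate masses, which is precisely what forces the strictness (and not mere nonnegativity) in $I_1$. The remaining ingredients — matching $\xi^0$ to the lemma's hypotheses, the bipartite sign check for $I_2$, and the openness argument — are routine.
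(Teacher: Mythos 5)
Your proposal is correct and takes essentially the same route as the paper's proof: compute $\xi^0=-\xi^1$ explicitly from Corollary~\ref{cor: bipartite convergence}, verify the $I_2$ inequalities from the bipartite sign alternation, and obtain the strict $I_1$ inequalities from the uniform bound $\frac{1}{|E(G)|}$ in Lemma~\ref{lem: algorithm nice on stat dib}. Your final paragraph merely makes explicit the continuity/openness step that the paper leaves implicit here (and only spells out later, in the proof of Theorem~\ref{thm:winnie the when it's constant for W = 1 fairy}).
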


\begin{proof}
We prove this for $\xi^0$; by symmetry it will hold for $\xi^1$ as well since $\xi^1 = -\xi^0$. If the sides of $G$ are $S_0$ and $S_1$, with $u\in S_0$ and $v\in S_1$, then for $w\in S_0$, we have that $$\xi^0(w) = \lim_{k\to\infty} \mu_{2k}(w) - \lim_{k\to\infty} \nu_{2k}(w) = \frac{\deg(w)}{|E(G)|} - 0 = \frac{\deg(w)}{|E(G)|}$$ and for $w\in S_1$ we have that $$\xi^0(w) = \lim_{k\to\infty} \mu_{2k+1}(w) - \lim_{k\to\infty} \nu_{2k+1}(w) = 0 - \frac{\deg(w)}{|E(G)|} = - \frac{\deg(w)}{|E(G)|}.$$ Then for all $t, w \in G$ such that $t \sim w$, we have that $\xi(t)\xi(w) < 0$. We also have that by Lemma~\ref{lem: algorithm nice on stat dib}, $\xi^0(w_j)A_i(\xi^0)(w_j) \geq \frac{1}{|E(G)|^2} > 0$ for all $0 \leq i \leq |V(G)|- 2$ and $i < j \leq V(G)$. Thus, $\xi^0$ and $\xi^1$ lie strictly on the interior of the region $R \subset \mathbb{R}^{|V(G)|}$ of distributions $\xi$ that satisfy the tree-based transport inequalities $\mathcal{I}(G,T,\mathcal{O},\xi)$.
\end{proof}

Using these results, we are now ready to prove the main claim that the Wasserstein distance is eventually constant when $W = 1$ and $\beta < 1$.

We first define a variable that corresponds to how long $\{W_k\}$ takes to reach constancy. Note that this variable can be infinity if $\{W_k\}$ is not eventually constant.

\begin{definition} \label{rho}
For any Guvab $\mathcal{G}$ where $W_k \to 1$, define $\rho(\mathcal{G})$ to be $$\inf \{N \in \mathbb{Z} : \{W(\mu_k,\nu_k)\}_{k \geq N} = (1,1,1,\ldots) \}.$$
\end{definition}

\begin{theorem}\label{thm:winnie the when it's constant for W = 1 fairy}
For any Guvab $\mathcal{G}$ with $W = 1$ and $\beta < 1$, we have $\rho(\mathcal{G}) < \infty$. 
\end{theorem}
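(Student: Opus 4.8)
The plan is to show that $\xi_k$ eventually satisfies the tree-based transport inequalities $\mathcal{I}(G,T,\mathcal{O},\xi_k)$ for some fixed choice of spanning tree $T$ and $r$-monotone ordering $\mathcal{O}$, so that Corollary~\ref{cor: when ineqs, dist is 1} immediately yields $W_k(\mathcal{G}) = 1$ for all sufficiently large $k$, hence $\rho(\mathcal{G}) < \infty$. The entire section has been building toward exactly this reduction: the algorithm produces a valid plan (Lemma~\ref{aggie the agtoe fairy}), satisfying the inequalities forces that plan to have cost equal to the total positive mass (Corollary~\ref{cor: when ineqs, dist is sum of pos mass}), and in the $W=1$ setting that cost is exactly $1$ (Corollary~\ref{cor: when ineqs, dist is 1}). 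So the whole theorem collapses to a single topological/limiting claim about $\xi_k$.

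The key step is to exploit Corollary~\ref{cor: xi interior}, which tells us that both limit points $\xi^0$ and $\xi^1$ lie \emph{strictly} in the interior of the region $R$ cut out by the inequalities. First I would fix any spanning tree $T$ and $r$-monotone ordering $\mathcal{O}$. The defining inequalities in $\mathcal{I}(G,T,\mathcal{O},\xi)$ are all strict inequalities of the form $\xi(w_j)A_i(\xi)(w_j) > 0$ and $\xi(t)\xi(w) < 0$; since each $A_i$ is linear in $\xi$ (Lemma~\ref{lem: linear}), each of these is a strict inequality in a continuous (indeed polynomial, degree at most two) function of the vector $\xi \in \mathbb{R}^{|V(G)|}$. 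Therefore the region $R$ is open, and having $\xi^0, \xi^1$ in its interior means there is some $\varepsilon > 0$ such that every distribution within distance $\varepsilon$ of $\xi^0$ or of $\xi^1$ also lies in $R$.

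Next I would invoke convergence of the even and odd subsequences. Since $\lim_{k\to\infty}\xi_{2k} = \xi^0$ and $\lim_{k\to\infty}\xi_{2k+1} = \xi^1$ (these limits are well-defined by Corollary~\ref{corolawrence: xi0,xi1 are well defined}), there exists $N$ such that for all $k \geq N$ the vector $\xi_{2k}$ is within $\varepsilon$ of $\xi^0$ and $\xi_{2k+1}$ is within $\varepsilon$ of $\xi^1$. By the openness just established, every such $\xi_k$ lies in $R$, i.e.\ satisfies the tree-based transport inequalities. Applying Corollary~\ref{cor: when ineqs, dist is 1} gives $W_k(\mathcal{G}) = 1$ for all $k \geq N$, so $\{W_k\}_{k\geq N} = (1,1,1,\ldots)$ and $\rho(\mathcal{G}) \leq N < \infty$.

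I expect the only genuine subtlety to be verifying that $R$ is truly open, which requires being slightly careful about the inequalities in $I_1$: they range over $A_i(\xi)$ for all $i$, and one must confirm each $A_i$ is continuous as a function of $\xi$ so that the strict inequalities define an open condition. This is exactly where Lemma~\ref{lem: linear} does the work, since linearity gives continuity for free. Everything else is a standard ``interior point plus convergence implies eventual membership'' argument, so the proof should be short once the openness of $R$ and the strictness of the interior containment from Corollary~\ref{cor: xi interior} are in hand.
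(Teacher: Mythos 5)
Your proposal is correct and follows essentially the same route as the paper's own proof: fix $T$ and $\mathcal{O}$, use Corollary~\ref{cor: xi interior} to place $\xi^0,\xi^1$ in the interior of the open region $R$ cut out by the strict inequalities (openness via continuity, which the paper also notes), then use convergence of $\xi_{2k}$ and $\xi_{2k+1}$ to get $\xi_k\in R$ for large $k$ and conclude via Corollary~\ref{cor: when ineqs, dist is 1}. Your extra remark that Lemma~\ref{lem: linear} supplies the continuity of each $A_i$ is a nice explicit justification of a step the paper leaves implicit, but the argument is the same.
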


\begin{proof}
Pick an arbitrary spanning tree $T$ of $G$ and $r$-monotone ordering $\mathcal{O}$. By Corollary \ref{cor: xi interior}, $\xi^0$ and $\xi^1$ are on the interior of the region $R \subset \mathbb{R}^{|V(G)|}$ of distributions $\xi$ that satisfy the tree-based transport inequalities $\mathcal{I}(G,T,\mathcal{O},\xi)$. We note that all the inequalities in $\mathcal{I}(G,T,\mathcal{O},\xi)$ can be written in the form $f(\xi) > 0$, where $f: \mathbb{R}^{|V(G)|} \to \mathbb{R}$ is a continuous function. Thus, by the definition of a continuous function, there exists some $\varepsilon > 0$ such that for all $\xi \in \mathbb{R}^{|V(G)|}$ that satisfy $|\xi(w) - \xi^0(w)| < \varepsilon$ for all $w \in G$ or satisfy $|\xi(w) - \xi^1(w)| < \varepsilon$ for all $w \in G$, we have that $\xi \in R$. We also know, by the formal definition of a limit, that there exists some $N$ such that for all $k \geq N$ and all $w \in G$, we have $|\xi_{2k}(w) - \xi^0(w)| < \varepsilon$ and $|\xi_{2k+1}(w) - \xi^1(w)| < \varepsilon$. Thus, for all $k \geq 2N$, we have $\xi_k \in R$. By Corollary~\ref{cor: when ineqs, dist is 1}, for all $k \geq 2N$, we have that $W_k = 1$. Hence $\rho(\mathcal{G}) \leq 2N < \infty$.
\end{proof}

We next hope to characterize how long it takes the Wasserstein distance of these Guvabs with $W=1$ and $\beta < 1$ to become constant. In particular, we prove upper and lower bounds for $\rho(\mathcal{G})$. We start with the upper bound. To prove this upper bound, we first prove a lemma quantifying exactly how close to $\xi^0$ or $\xi^1$ a distribution must be in order for the tree-based transport inequalities $\mathcal{I}(G,T,\mathcal{O},\xi)$ to be satisfied.

\begin{lemma} \label{lem: explicit epsilon bounds}
Consider a Guvab with $W = 1$ and $\beta < 1$. Pick an arbitrary spanning tree $T$ and $r$-monotone ordering $\mathcal{O}$. Let $\varepsilon(G) = \frac{1}{|V||E|}$. If for a distribution $\xi$ it is true that for all vertices $w$ we have that $|\xi(w) - \xi^0(w)| < \varepsilon(G)$ or it is true that for all vertices $w$ we have that $|\xi(w) - \xi^1(w)| < \varepsilon(G)$, then $\xi$ satisfies the tree-based transport inequalities $\mathcal{I}(G,T,\mathcal{O},\xi)$.
\end{lemma}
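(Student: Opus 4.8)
The plan is to establish the claimed explicit bound $\varepsilon(G) = \frac{1}{|V||E|}$ by showing it is small enough to force every inequality in $\mathcal{I}(G,T,\mathcal{O},\xi)$ to hold, given that $\xi$ is within $\varepsilon(G)$ of $\xi^0$ (the case of $\xi^1$ being identical by the symmetry $\xi^1 = -\xi^0$ noted in Corollary~\ref{cor: xi interior}). The inequalities split into two families, $I_1$ and $I_2$, and I would handle each separately. For $I_2$, the inequalities $\xi(t)\xi(w) < 0$ for $t \sim w$, I would use that $\xi^0$ has strictly signed entries $\pm\frac{\deg(\cdot)}{|E(G)|}$ on the two sides, with smallest magnitude $\frac{1}{|E(G)|}$ (attained at a degree-one vertex). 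Since $\varepsilon(G) = \frac{1}{|V||E|} \le \frac{1}{|E(G)|}$, perturbing each entry by less than $\varepsilon(G)$ cannot flip any sign, so adjacent vertices retain opposite signs and $I_2$ holds.

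The substantive work is $I_1$: the inequalities $\xi(w_j)A_i(\xi)(w_j) > 0$. Here I would lean on Lemma~\ref{lem: algorithm nice on stat dib} and the linearity of $A_i$ from Lemma~\ref{lem: linear}. Write $\xi = \xi^0 + \delta$ where $\|\delta\|_\infty < \varepsilon(G)$. By linearity, $A_i(\xi) = A_i(\xi^0) + A_i(\delta)$, and similarly $\xi(w_j) = \xi^0(w_j) + \delta(w_j)$. Lemma~\ref{lem: algorithm nice on stat dib} gives that $A_i(\xi^0)(w_j)$ has magnitude at least $\frac{1}{|E(G)|}$ with the correct sign matching $\xi^0(w_j)$ (positive on $S_0$, negative on $S_1$), so $\xi^0(w_j)A_i(\xi^0)(w_j) \ge \frac{1}{|E(G)|^2}$. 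The goal is then to show the perturbation terms $\delta(w_j)A_i(\xi^0)(w_j)$, $\xi^0(w_j)A_i(\delta)(w_j)$, and $\delta(w_j)A_i(\delta)(w_j)$ are together too small to change the sign of the product. The main obstacle, and the point requiring genuine care, is bounding $\|A_i(\delta)\|_\infty$: each step of the algorithm can redistribute mass and potentially accumulate it, so I would prove by induction on $i$ that $\|A_i(\delta)\|_\infty \le |V(G)| \cdot \|\delta\|_\infty$ (or a comparable polynomial bound), since at each step a vertex's mass is spread among its neighbors but a receiving vertex gathers contributions from at most all earlier vertices; the factor $|V|$ in $\varepsilon(G) = \frac{1}{|V||E|}$ is precisely what absorbs this growth.

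With $\|A_i(\delta)\|_\infty \le |V(G)|\,\varepsilon(G) = \frac{1}{|E(G)|}$ in hand, and using $|\xi^0(w_j)| \le 1$ and $|A_i(\xi^0)(w_j)| \le |V(G)|$ (a crude bound suffices), I would estimate the cross terms: each is bounded by a product of a $\delta$-factor (at most $\varepsilon(G)$ or $\frac{1}{|E(G)|}$) against a stationary factor, yielding a total perturbation strictly smaller than the guaranteed gap $\frac{1}{|E(G)|^2}$. This keeps $\xi(w_j)A_i(\xi)(w_j) > 0$, establishing $I_1$ and hence all of $\mathcal{I}(G,T,\mathcal{O},\xi)$. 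I expect the inductive bound on $\|A_i(\delta)\|_\infty$ to be the crux; everything else is careful but routine arithmetic, and I would present the final sign-preservation estimate as a single chain of inequalities once the component bounds are assembled.
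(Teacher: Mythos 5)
Your overall skeleton --- decompose $\xi = \xi^0 + \delta$, invoke the linearity of $A_i$ from Lemma~\ref{lem: linear}, bound the propagated error $A_i(\delta)$ by $|V|\varepsilon(G) = \frac{1}{|E|}$, and compare against the $\frac{1}{|E|}$ lower bound of Lemma~\ref{lem: algorithm nice on stat dib} --- is the same as the paper's, and your treatment of $I_2$ is fine. But two of your steps, as proposed, do not go through. First, the bound $\|A_i(\delta)\|_\infty \leq |V|\,\|\delta\|_\infty$ cannot be obtained by inducting on the sup-norm alone: at step $i+1$ a neighbor's value becomes $A_i(\delta)(w_j) + \frac{1}{n}A_i(\delta)(w_{i+1})$, so the inductive hypothesis only yields $\|A_{i+1}(\delta)\|_\infty \leq 2\|A_i(\delta)\|_\infty$, i.e.\ exponential growth; and the heuristic that a vertex ``gathers contributions from at most all earlier vertices'' ignores that received mass is itself re-redistributed at later steps. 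The invariant that actually works --- and is what the paper uses --- is the $\ell^1$ norm: at step $i$ the quantity $\sum_{w}|A_i(\delta)(w)|$ loses exactly $|A_{i-1}(\delta)(w_i)|$ at $w_i$ and gains at most that much at its neighbors, so it is non-increasing, whence $\|A_i(\delta)\|_\infty \leq \|A_i(\delta)\|_1 \leq \|\delta\|_1 < |V|\varepsilon(G) = \frac{1}{|E|}$.

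Second, your concluding estimate fails numerically. With your stated crude bounds $|A_i(\xi^0)(w_j)| \leq |V|$ and $|\xi^0(w_j)| \leq 1$, the cross terms satisfy only $|\delta(w_j)A_i(\xi^0)(w_j)| \leq \varepsilon(G)\cdot|V| = \frac{1}{|E|}$ and $|\xi^0(w_j)A_i(\delta)(w_j)| \leq \frac{1}{|E|}$, and $\frac{1}{|E|} \geq \frac{1}{|E|^2}$, so each of these already exceeds the guaranteed gap; the product-perturbation argument does not close (it could be rescued only by carrying the degree factors $\deg(w_j)$ and $\deg_{G_i}(w_j)$ through every term, since the true gap at $w_j$ is $\frac{\deg(w_j)\deg_{G_i}(w_j)}{|E|^2}$). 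The paper sidesteps the product entirely: since $|\delta(w_j)| < \frac{1}{|E|} \leq |\xi^0(w_j)|$ and $|A_i(\delta)(w_j)| < \frac{1}{|E|} \leq |A_i(\xi^0)(w_j)|$, each factor of $\xi(w_j)A_i(\xi)(w_j)$ retains the strict sign of the corresponding unperturbed factor, and those two signs agree by Lemma~\ref{lem: algorithm nice on stat dib}; positivity of the product then follows with no arithmetic on the product at all. You should replace your final ``chain of inequalities'' with this factor-by-factor sign-preservation argument.
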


\begin{proof}
We prove this for $\xi^0$, and an analogous argument will hold for $\xi^1$.

We note that by Lemma \ref{lem: algorithm nice on stat dib} we have that if we start with $\xi^0$, then at any point in the tree-based transport algorithm through step $|V| - 2$, the absolute value of the mass at any vertex is at least $\frac{1}{|E|}$. Thus, if at any point $i$ in the algorithm through step $|V| - 2$ the mass at a vertex differs by at most $\frac{1}{|E|}$ from $A_i(\xi^0)$, then the tree-based transport inequalities $\mathcal{I}$ are satisfied because mass is never the wrong sign. 

It thus suffices to show that for all $0 \leq i \leq |V| - 2$ and for all $w \in G$, we have $|A_i(\xi)(w) - A_i(\xi^0)(w)| \leq \frac{1}{|E|}$. To prove this, we note that $\xi(w) - \xi^0(w)$ is a zero-sum distribution, and by Lemma \ref{lem: linear} for all $i$ and for all $w$, we have $A_i(\xi)(w) = A_i(\xi^0)(w) + A_i(\xi - \xi^0)(w)$. 
We consider the quantity $\sum_{w \in G} |A_i(\xi)(w) - A_i(\xi^0)(w)| = \sum_{w\in G} |A_i(\xi -\xi^0)(w)|$. This will be nonincreasing as $i$ gets larger, since at step $i$ of the algorithm the absolute value of the mass at $w_i$ decreases by exactly $|A_{i-1}(\xi-\xi^0)(w_i)|$ while the sum of absolute values at $w_i$'s neighbors cannot increase by more than $|A_{i-1}(\xi-\xi^0)(w_i)|$. The maximum value of this sum is $|V|\varepsilon(G)$ (since this is an upper bound for the value at the beginning). We know that $\max_{w \in G} |A_i(\xi)(w) - A_i(\xi^0)(w)| \leq \sum_{w \in G} |A_i(\xi)(w) - A_i(\xi^0)(w)|$ so $\max_{w \in G} |A_i(\xi)(w) - A_i(\xi^0)(w)| \leq |V|\varepsilon(G) = \frac{1}{|E|}$, which is exactly what we wanted to show, so we are done.
\end{proof}

With this lemma established, we can now prove our upper bound for $\rho(\mathcal{G})$.

\begin{lemma}
Let $\lambda_{\max}$ be $\displaystyle \max_{|\lambda| \in L: |\lambda| < 1} |\lambda|$ where L is the set of all eigenvalues of $X$ and $Y$. Then for a Guvab $\mathcal{G}$ where $W = 1$ and $\beta < 1$, we have $\displaystyle \rho(\mathcal{G}) \leq \frac{10\ln|V|}{1-\lambda_{\max}^2}$.
\end{lemma}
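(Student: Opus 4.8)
The plan is to convert the claim into a quantitative rate-of-convergence estimate for $\xi_k$ and then control that rate spectrally. By Lemma~\ref{lem: explicit epsilon bounds} together with Corollary~\ref{cor: when ineqs, dist is 1}, once $\xi_k$ lies within sup-distance $\varepsilon(G) = \frac{1}{|V||E|}$ of the appropriate limit---$\xi^0$ when $k$ is even and $\xi^1$ when $k$ is odd (recall Corollary~\ref{corolawrence: xi0,xi1 are well defined})---we automatically have $W_k = 1$. Since the same-parity indices are then pinned at $1$ forever after, it suffices to find, for each parity, the first index past which the deviation drops below $\varepsilon(G)$; then $\rho(\mathcal{G})$ (Definition~\ref{rho}) is at most the larger of these two thresholds. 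Recall that $W=1$ with $\beta<1$ forces $G$ bipartite with $u,v$ on opposite sides and $\alpha=\beta=0$, so $P_\alpha = P_\beta = P$ and $\xi_k = \xi_0 P^k$ with $\xi_0 = \mathbbm{1}_u - \mathbbm{1}_v$.

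Next I would isolate the decaying part of $\xi_k$. Writing $\xi_k(w) = \sum_i c_i^w \lambda_i^k$ as in Lemma~\ref{lem: eileen the eigval sum fairy}, connectivity and bipartiteness guarantee that $\pm 1$ are simple eigenvalues of $P$ and every other eigenvalue has modulus strictly below $1$; the $\pm 1$ terms reassemble into $\xi^0$ and $\xi^1$, so the deviation from the limit is exactly $\sum_{|\lambda_i|<1} c_i^w \lambda_i^k$. The key move is to evaluate this along each parity separately: for even $k=2m$ one has $|\lambda_i^{2m}| \le (\lambda_{\max}^2)^m$, so the deviation is at most $C(\lambda_{\max}^2)^m$, where $C := \max_w \sum_{|\lambda_i|<1} |c_i^w|$, and the odd case is identical up to one extra factor of $\lambda_{\max} \le 1$. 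It is precisely this parity split that produces $\lambda_{\max}^2$, rather than $\lambda_{\max}$, in the denominator of the final bound.

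The main obstacle is bounding the coefficient constant $C$, and here I would invoke reversibility. Letting $\{f_i\}$ be the eigenvectors of $P$ orthonormal with respect to $\pi$, the standard reversible spectral form gives $c_i^w = \pi_w f_i(w)\bigl(f_i(u)-f_i(v)\bigr)$. Applying Cauchy--Schwarz and the closure identities $\sum_i f_i(x)^2 = 1/\pi_x$ and $\sum_i f_i(u)f_i(v)=0$ (valid since $u \ne v$) yields $\sum_i |c_i^w| \le \sqrt{\pi_w\bigl(1/\pi_u + 1/\pi_v\bigr)} = \sqrt{\deg(w)/\deg(u) + \deg(w)/\deg(v)} \le \sqrt{2|V|}$, so $C \le \sqrt{2|V|} \le |V|$.

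Finally I would solve for the threshold. Requiring $C(\lambda_{\max}^2)^m < \varepsilon(G)$ gives $m > \frac{\ln(C|V||E|)}{\ln(1/\lambda_{\max}^2)}$; using the elementary inequality $\ln(1/\lambda_{\max}^2) \ge 1 - \lambda_{\max}^2$ together with the crude estimates $C \le |V|$ and $|E| \le |V|^2$, hence $C|V||E| \le |V|^4$, one obtains an admissible $m$ with $2m \le \frac{8\ln|V|}{1-\lambda_{\max}^2} + 2$. Since taking $k \ge 2m$ covers both parities, $\rho(\mathcal{G}) \le 2m$, and the gap between the constant $8$ and the target $10$ absorbs the additive term whenever $1-\lambda_{\max}^2 \le \ln|V|$, which holds for all relevant graphs ($|V| \ge 3$; the single-edge case has no eigenvalue of modulus below $1$ and is immediate). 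This yields $\rho(\mathcal{G}) \le \frac{10\ln|V|}{1-\lambda_{\max}^2}$ as claimed.
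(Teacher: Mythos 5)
Your proposal is correct, and it shares the paper's opening reduction but diverges in the key quantitative step, so a comparison is worthwhile. Both arguments use Lemma~\ref{lem: explicit epsilon bounds} together with Corollary~\ref{cor: when ineqs, dist is 1} to reduce the problem to bounding the first index after which $\xi_k$ stays within $\varepsilon(G)=\frac{1}{|V||E|}$ of $\xi^0$ (even $k$) or $\xi^1$ (odd $k$). The paper then treats $\mu_{2k}$ and $\nu_{2k}$ separately as reversible chains converging in total variation and simply cites the mixing-time estimate of Diaconis and Stroock (Prop.~3 of \cite{diaconis1991geometric}), with $\ln(1/\gamma_x(x))\le\ln|E|$ supplying the logarithmic factor; you instead work directly with the signed difference $\xi_k$ in sup norm and reprove the needed decay estimate from scratch: the reversible spectral form gives $c_i^w=\pi_w f_i(w)\bigl(f_i(u)-f_i(v)\bigr)$, and Cauchy--Schwarz with the closure identities $\sum_i f_i(x)^2=1/\pi_x$ and $\sum_i f_i(u)f_i(v)=0$ bounds the total coefficient mass by $\sqrt{2|V|}\le|V|$, after which $\ln(1/\lambda_{\max}^2)\ge 1-\lambda_{\max}^2$ and $C|V||E|\le|V|^4$ deliver the constant $10$. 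Your route is self-contained and produces a clean pointwise bound $|\xi_k(w)-\xi^{k\bmod 2}(w)|\le C\lambda_{\max}^k$, which matches the sup-norm hypothesis of Lemma~\ref{lem: explicit epsilon bounds} exactly (the paper must implicitly convert total-variation control of each chain into that sup-norm control, which is why it uses the threshold $\frac{1}{2|V||E|}$ for each of $\mu_{2k}$ and $\nu_{2k}$); the cost is that you redo a standard computation the paper obtains by citation. Your bookkeeping of the additive constant against the gap between $8$ and $10$, and your dispatch of the degenerate $|V|=2$ case where the maximum defining $\lambda_{\max}$ is over an empty set, are both correct.
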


\begin{proof}
We use \cite[Prop. 3]{diaconis1991geometric}.
The Markov chains $X_{2k}$ and $Y_{2k}$ are both converging to their even stationary distributions $\lim_{k\to\infty} \mu_{2k}$ and $\lim_{k\to\infty} \nu_{2k}$. For convenience, denote $\lim_{k\to\infty} \mu_{2k}$ by $\gamma_u$ and $\lim_{k\to\infty} \nu_{2k}$ by $\gamma_v$. Once at all vertices, $\mu_{2k}$ and $\nu_{2k}$ are both less than or equal to $\frac{1}{2|V||E|}$ away from their respective stationary distributions, $\xi_{2k}$ will satisfy the tree-based transport inequalities $\mathcal{I}(G,T,\mathcal{O},\xi_{2k})$ by Lemma \ref{lem: explicit epsilon bounds}. Since $X_{2k}$ and $Y_{2k}$ are both Markov chains with limiting distributions, we use notation analogous to that of \cite{Sinclair92improvedbounds} and say that $\Delta_{\even\,u}(k) = \displaystyle \frac{1}{2}\sum_{w\in G}|\mu_{2k}(w) - \gamma_u(w)|$. Similarly, $\Delta_{\even\,v}(k) = \displaystyle \frac{1}{2}\sum_{w\in G}|\nu_{2k}(w) - \gamma_v(w)|$. Then for $\varepsilon > 0$ and $x \in \{u,v\}$ we let $\tau_{\even \, x}(\varepsilon)$ be the minimum nonnegative integer $k$ such that $\Delta_{\even\,x}(k') \leq \varepsilon$ for all $k' \geq k$. Thus, by \cite[Prop. 3]{diaconis1991geometric}, the time $\rho_{\even}$ it takes for $W_{2k}$ to eventually have distance $1$ satisfies $$\rho_{\even}\leq\max_{x \in \{u,v\}} 2\tau_{\even \,x}(\frac{1}{2|V||E|}) \leq \max_{x \in \{u,v\}} \frac{2}{1-\lambda_{\max}^2}(\ln{\frac{1}{\gamma_x(x)}} + \ln{\cfrac{1}{\frac{1}{2|V||E|}}}).$$ 
Then we just need to bound the right-hand side above. This gives 
\begin{align*}
    \displaystyle \rho_{\even}&\leq\frac{2}{1-\lambda_{\max}^2}(\ln{|E|} + \ln{2|V||E|}) 
    =\frac{2}{1-\lambda_{\max}^2}(\ln{2|V||E|^2})\\
    &\leq \frac{2}{1-\lambda_{\max}^2}(\ln{|V|^3(|V|-1)^2})\\  
    &< \frac{2}{1-\lambda_{\max}^2}\cdot 5 \ln(|V|)
    = \frac{10\ln|V|}{1-\lambda_{\max}^2}.
\end{align*}

By similar reasoning, the same bound works for $\rho_{\odd}$, 
the time it takes for $W_{2k+1}$ to eventually have distance $1$. Thus, $\displaystyle\frac{10\ln|V|}{1-\lambda_{\max}^2}$ is an upper bound for $\rho(\mathcal{G})$.
\end{proof}

We now establish a lower bound for $\rho(\mathcal{G})$.
\begin{lemma}
For a Guvab $\mathcal{G}$ where $W = 1$ and $\beta < 1$, we have $\displaystyle \rho(\mathcal{G}) \geq \frac{\emph{\dist(u,v)}}{2} - 1$.
\end{lemma}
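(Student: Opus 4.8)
The plan is to prove a linearly-decaying lower bound on $W_k$ and then read off the bound on $\rho(\mathcal{G})$. Write $d = \dist(u,v)$. The natural tool is Kantorovich Duality, which for any single $1$-Lipschitz function $\ell$ gives $W_k = W(\mu_k,\nu_k) \geq \sum_{w \in G} \ell(w)\bigl(\mu_k(w) - \nu_k(w)\bigr)$. I would apply this with the specific test function $\ell(w) = \dist(v,w)$, which is $1$-Lipschitz since $|\dist(v,w_1) - \dist(v,w_2)| \leq \dist(w_1,w_2)$ by the triangle inequality for graph distance.

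With this choice, $\sum_w \ell(w)\mu_k(w)$ is the expected distance from $v$ to the position of the first walk after $k$ steps, and $\sum_w \ell(w)\nu_k(w)$ is the expected distance from $v$ to the position of the second walk after $k$ steps. The second walk starts at $v$, and a single step of any random walk moves to a neighbor (or stays put), changing the distance to a fixed vertex by at most $1$; hence after $k$ steps the second walk is at distance at most $k$ from $v$, so $\sum_w \ell(w)\nu_k(w) \leq k$. For the first walk, which starts at $u$, the same observation gives that it is within distance $k$ of $u$, so by the triangle inequality its distance from $v$ is at least $d - k$, whence $\sum_w \ell(w)\mu_k(w) \geq d - k$. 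Subtracting, $W_k \geq (d-k) - k = d - 2k$.

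Finally I would convert this into the stated bound. By Theorem~\ref{thm:winnie the when it's constant for W = 1 fairy} we know $\rho(\mathcal{G}) < \infty$, and by the definition of $\rho(\mathcal{G})$ we have $W_k = 1$ for all $k \geq \rho(\mathcal{G})$; in particular $W_{\rho(\mathcal{G})} = 1$. Substituting $k = \rho(\mathcal{G})$ into $W_k \geq d - 2k$ gives $1 \geq d - 2\rho(\mathcal{G})$, i.e. $\rho(\mathcal{G}) \geq \tfrac{d-1}{2} \geq \tfrac{d}{2} - 1$, which is the claim (and in fact yields the slightly stronger $\tfrac{\dist(u,v)-1}{2}$).

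I do not expect a serious obstacle: the whole argument is a single application of duality with a well-chosen test function. The only points needing care are (i) getting the orientation of the duality inequality right, so that the $\mu_k$-term supplies the $+(d-k)$ contribution and the $\nu_k$-term the $-k$ contribution, and (ii) extracting the $\rho$-bound correctly, using that the infimum defining $\rho(\mathcal{G})$ is attained so that $W_{\rho(\mathcal{G})} = 1$ exactly (when $\tfrac{d}{2}-1 \leq 0$ the inequality is anyway immediate from $\rho(\mathcal{G}) \geq 0$). As an alternative to citing Kantorovich Duality, one could argue entirely in the primal: any transport plan $T$ has cost $\sum \dist(w_1,w_2)T(w_1,w_2) \geq \sum |\ell(w_1)-\ell(w_2)|\,T(w_1,w_2) \geq \sum_w \ell(w)(\mu_k(w)-\nu_k(w))$, but invoking the stated duality theorem is cleaner.
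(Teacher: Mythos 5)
Your proof is correct, but it takes a genuinely different route from the paper's. The paper argues entirely in the primal and more crudely: it observes that $\mu_k$ is supported in the ball of radius $k$ about $u$ and $\nu_k$ in the ball of radius $k$ about $v$, so that for $k < \frac{\dist(u,v)}{2}-1$ every unit of mass must travel distance at least $2$, forcing $W_k \geq 2 > 1$ and hence $\rho(\mathcal{G}) \geq \frac{\dist(u,v)}{2}-1$. You instead apply Kantorovich duality with the test function $\ell(\cdot)=\dist(v,\cdot)$ and the same support observation to get the quantitative estimate $W_k \geq \dist(u,v)-2k$ for every $k$, from which the bound on $\rho$ follows by evaluating at $k=\rho(\mathcal{G})$ (the infimum in Definition~\ref{rho} is attained over the integers since $\rho(\mathcal{G})<\infty$ by Theorem~\ref{thm:winnie the when it's constant for W = 1 fairy}, so $W_{\rho(\mathcal{G})}=1$ is legitimate). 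Your version buys slightly more: a linearly decaying lower bound on $W_k$ valid for all $k$, and the marginally sharper conclusion $\rho(\mathcal{G}) \geq \frac{\dist(u,v)-1}{2}$, which is a genuine improvement here since $\dist(u,v)$ is odd in the $W=1$, $\beta<1$ case ($u$ and $v$ lie on opposite sides of the bipartition). The paper's argument is more elementary in that it avoids duality, but both rest on the same underlying fact about the supports of the $k$-step measures, and both are valid.
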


\begin{proof}
We note that $\mu_k(t) = 0$ for $t \in V$ if $\dist(t,u) > k$. Similarly, $\nu_k(w) = 0$ for $w \in V$ if $\dist(w,v) > k$. Suppose $k < \frac{\dist(u,v)}{2} - 1$ and consider any pair of vertices $t,w$ such that $\mu_k(t) > 0$ and $\nu_k(w) > 0$. Then $\dist(t,u) \leq \frac{\dist(u,v)}{2} - 1$ and $\dist(w,v) \leq \frac{\dist(u,v)}{2} - 1$, so $\dist(t,w) \geq \dist(u,v) - (\dist(t,u) +\dist(w,v)) = 2$. Therefore all mass will have to move a distance of at least 2 to get from $\mu_k$ to $\nu_k$, so $W_k \geq 2 > 1$. 
\end{proof}

\section{Convergence when $W = \frac{1}{2}$}
In this section, we consider Guvabs where $W = \frac{1}{2}$ and $\beta < 1$. Recall that these are exactly the Guvabs for which $G$ is bipartite and $0 = \alpha < \beta < 1$. As in the previous section, and for similar reasons, the Wasserstein distance will eventually be the sum of positive mass. In this case, however, the Wasserstein distance is not eventually constant but rather an exponential that we can express explicitly. To prove this, we proceed by a similar strategy as in the $W = 1$ case. In particular, we show that the tree-based transport inequalities will eventually be satisfied, and compute the Wasserstein distance when these inequalities are satisfied.

In the next three results, we show that the tree-based transport inequalities will eventually be satisfied, and provide an initial expression for what the Wasserstein distance will be when the tree-based transport inequalities are satisfied. Later, we will calculate exactly what this expression for the Wasserstein distance evaluates to.

We begin by showing in the next two results that, analogously to before, $\xi^0$ and $\xi^1$ lie on the interior of the region of distributions that satisfy the inequalities.

\begin{lemma} \label{lem: for 1/2 algorithm nice on stat dib}
Suppose we have a bipartite graph $G$ with sides $S_0$ and $S_1$ and a distribution $\xi$ such that for $w\in S_0$ $\xi(w) = \frac{\deg(w)}{2|E(G)|}$ and for $w\in S_1$ $\xi(w) = -\frac{\deg(w)}{2|E(G)|}$. Then pick an arbitrary spanning tree T and $r$-monotone ordering $\mathcal{O}$ on $V(G)$. Consider the tree-based transport plan $A(G,T,\mathcal{O},\xi)$. We have that after each step $i$ for $i \leq n-2$, for $w_j\in S_0$ with $j > i$, we have that $A_i(\xi)(w_j) \geq \frac{1}{2|E(G)|}$ and for $w_j\in S_1$ with $j > i$ we have that $A_i(\xi)(w_j) \leq -\frac{1}{2|E(G)|}$.
\end{lemma}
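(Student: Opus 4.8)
The plan is to reduce Lemma~\ref{lem: for 1/2 algorithm nice on stat dib} directly to the already-proven Lemma~\ref{lem: algorithm nice on stat dib}. The distribution $\xi$ here is precisely $\tfrac{1}{2}$ times the distribution $\xi'$ considered in Lemma~\ref{lem: algorithm nice on stat dib}: there, $\xi'(w) = \tfrac{\deg(w)}{|E(G)|}$ on $S_0$ and $\xi'(w) = -\tfrac{\deg(w)}{|E(G)|}$ on $S_1$, whereas here we have $\xi(w) = \tfrac{\deg(w)}{2|E(G)|}$ and $\xi(w) = -\tfrac{\deg(w)}{2|E(G)|}$ respectively, so $\xi = \tfrac{1}{2}\xi'$. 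Since $\xi$ and $\xi'$ are both zero-sum distributions (each side carries total mass $\pm\tfrac{|E(G)|}{|E(G)|}$ or $\pm\tfrac{1}{2}$, balancing out), the linearity established in Lemma~\ref{lem: linear} applies to both.

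First I would invoke Lemma~\ref{lem: linear}, which gives that each $A_i$ is linear on the space of zero-sum distributions, to conclude $A_i(\xi) = A_i(\tfrac{1}{2}\xi') = \tfrac{1}{2}A_i(\xi')$ for every $i$. Next I would apply Lemma~\ref{lem: algorithm nice on stat dib} to $\xi'$, which tells us that after each step $i \leq n-2$, we have $A_i(\xi')(w_j) \geq \tfrac{1}{|E(G)|}$ for $w_j \in S_0$ with $j > i$, and $A_i(\xi')(w_j) \leq -\tfrac{1}{|E(G)|}$ for $w_j \in S_1$ with $j > i$. Scaling these bounds by $\tfrac{1}{2}$ yields exactly $A_i(\xi)(w_j) \geq \tfrac{1}{2|E(G)|}$ on $S_0$ and $A_i(\xi)(w_j) \leq -\tfrac{1}{2|E(G)|}$ on $S_1$, which is the desired conclusion.

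There is essentially no main obstacle here, since the statement is a rescaling of a lemma already in hand; the only point requiring a moment of care is confirming that the chosen spanning tree $T$ and $r$-monotone ordering $\mathcal{O}$ can be taken to be the same for both $\xi$ and $\xi'$ (they can, as the algorithm's structure depends only on $G$, $T$, and $\mathcal{O}$, not on the input distribution), so that the linearity identity $A_i(\tfrac{1}{2}\xi') = \tfrac{1}{2}A_i(\xi')$ is literally applicable with matching indices. One could alternatively prove the lemma from scratch by the same induction on $i$ used in Lemma~\ref{lem: algorithm nice on stat dib}, tracking the subgraph degrees $\deg_{G_i}(w_j)$ scaled by $\tfrac{1}{2|E(G)|}$ instead of $\tfrac{1}{|E(G)|}$, but the reduction via linearity is cleaner and avoids repeating the argument.
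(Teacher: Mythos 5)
Your proposal is correct and is essentially identical to the paper's own proof: the paper likewise applies Lemma~\ref{lem: algorithm nice on stat dib} to $2\xi$, uses the linearity from Lemma~\ref{lem: linear} to write $A_i(2\xi)=2A_i(\xi)$, and divides the resulting bounds by $2$.
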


\begin{proof}
We note that this is nearly the same as Lemma~\ref{lem: algorithm nice on stat dib}, but differs by a constant factor of $\frac{1}{2}$. Given the distribution $\xi$, we know by Lemma~\ref{lem: algorithm nice on stat dib} that after all steps $i$ for $i \leq n-2$, for $w_j\in S_0$ with $j>i$, we have that $A_i(2\xi)(w_j) \geq \frac{1}{|E(G)|}$ and for $w_j\in S_1$ with $j>i$ we have that $A_i(2\xi)(w_j) \leq -\frac{1}{|E(G)|}$. We also know that $A_i(2\xi) = 2A_i(\xi)$ by Lemma \ref{lem: linear} so this means for $w_j\in S_0$ with $j>i$, we have that $2A_i(\xi)(w_j) \geq \frac{1}{|E(G)|}$ and for $w\in S_1$ with $j>i$, we have that $2A_i(\xi)(w_j) \leq -\frac{1}{|E(G)|}$. Dividing both sides by 2, we get that after all steps $i$ for $i \leq n-2$, for $w_j\in S_0$ with $j>i$, we have that $A_i(\xi)(w_j) \geq \frac{1}{2|E(G)|}$ and for $w_j\in S_1$ with $j > i$, we have that $A_i(\xi)(w_j) \leq -\frac{1}{2|E(G)|}$. 
\end{proof}

\begin{corollary} \label{cor: stat dib on interior for w=1/2}
For any Guvab $\mathcal{G}$ where $W = \frac{1}{2}$ and $\beta < 1$, we have that $\xi^0$ and $\xi^1$ lie strictly on the interior of the region $R \subset \mathbb{R}^{|V(G)|}$ of distributions $\xi$ that satisfy the tree-based transport inequalities $\mathcal{I}(G,T,\mathcal{O},\xi)$.
\end{corollary}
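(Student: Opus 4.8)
The plan is to follow the strategy of Corollary~\ref{cor: xi interior} (the $W=1$ analogue), replacing the algorithm estimate used there by its $W=\frac{1}{2}$ counterpart, Lemma~\ref{lem: for 1/2 algorithm nice on stat dib}. The one substantive difference between the two regimes is that here $G$ is bipartite with $0=\alpha<\beta<1$, so $\nu_k$ now converges to $\pi$ (by Lemma~\ref{lem: pippa the pi fairy}) rather than being supported on a single side. This is precisely what introduces the factor of $\frac{1}{2}$ into the entries of $\xi^0$ and $\xi^1$ and aligns them with the hypotheses of Lemma~\ref{lem: for 1/2 algorithm nice on stat dib}.

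First I would compute $\xi^0$ and $\xi^1$ explicitly. Writing $S_1,S_2$ for the two sides with $u\in S_1$, Corollary~\ref{cor: bipartite convergence} gives $\mu_{2k}\to\tau_1$ and $\mu_{2k+1}\to\tau_2$, while $\nu_k\to\pi$ since $\beta>0$; hence $\xi^0=\tau_1-\pi$ and $\xi^1=\tau_2-\pi$. Using $(\tau_1)_w=\frac{\deg(w)}{|E(G)|}$ on $S_1$ (and $0$ on $S_2$) together with $\pi_w=\frac{\deg(w)}{2|E(G)|}$, a one-line subtraction yields $\xi^0(w)=\frac{\deg(w)}{2|E(G)|}$ for $w\in S_1$ and $\xi^0(w)=-\frac{\deg(w)}{2|E(G)|}$ for $w\in S_2$; symmetrically $\xi^1=-\xi^0$. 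This is exactly the balanced distribution assumed in Lemma~\ref{lem: for 1/2 algorithm nice on stat dib}, with the side on which $\xi^0$ is positive (namely $S_1$) playing the role of the lemma's $S_0$. The step requiring care is this identification: one must confirm that subtracting $\pi$ halves the entries of $\tau_1$ on $u$'s side and flips the sign on the other side, so the $\pm\frac{\deg(w)}{2|E(G)|}$ hypothesis of the lemma is met exactly.

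With $\xi^0$ in this form, both families of tree-based transport inequalities from Definition~\ref{def: inequalities} follow at once. For $I_2$, any adjacent $t\sim w$ lie on opposite sides, so $\xi^0(t)$ and $\xi^0(w)$ have opposite signs and $\xi^0(t)\xi^0(w)<0$ strictly. For $I_1$, Lemma~\ref{lem: for 1/2 algorithm nice on stat dib} gives $A_i(\xi^0)(w_j)\geq\frac{1}{2|E(G)|}$ for $w_j\in S_1$ and $A_i(\xi^0)(w_j)\leq-\frac{1}{2|E(G)|}$ for $w_j\in S_2$ at every admissible $i<j$, so $A_i(\xi^0)(w_j)$ shares the sign of $\xi^0(w_j)$ and the product $\xi^0(w_j)A_i(\xi^0)(w_j)$ is bounded below by a strictly positive constant of order $\frac{1}{|E(G)|^2}$.

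Finally, every inequality in $\mathcal{I}(G,T,\mathcal{O},\xi)$ has the form $f(\xi)>0$ for a function $f$ that is continuous in the coordinates of $\xi$ (indeed polynomial, since each $A_i$ is linear by Lemma~\ref{lem: linear}). Because $\xi^0$ satisfies all of them with a strictly positive margin, a sufficiently small neighborhood of $\xi^0$ still satisfies every inequality, so $\xi^0$ lies strictly in the interior of $R$; the relation $\xi^1=-\xi^0$ then gives the same conclusion for $\xi^1$. I do not anticipate a genuine obstacle: the argument is essentially a transcription of Corollary~\ref{cor: xi interior}, and the only content specific to the $W=\frac{1}{2}$ case is the degree bookkeeping turning $\tau_1-\pi$ into the $\pm\frac{\deg(w)}{2|E(G)|}$ distribution that feeds Lemma~\ref{lem: for 1/2 algorithm nice on stat dib}.
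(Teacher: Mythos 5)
Your proposal is correct and follows essentially the same route as the paper: compute $\xi^0=\tau_1-\pi=\pm\frac{\deg(w)}{2|E(G)|}$ on the two sides, verify the $I_2$ inequalities from bipartiteness, and verify the $I_1$ inequalities via Lemma~\ref{lem: for 1/2 algorithm nice on stat dib}, with $\xi^1=-\xi^0$ handled by symmetry. The only difference is that you spell out the continuity argument for interior membership here, which the paper defers to the surrounding results; this is harmless.
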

\begin{proof}
We note that when $W = \frac{1}{2}$ and $\beta < 1$, we have that $\beta > 0$ so for all $w \in G$, we have that $\lim_{k\to\infty} \nu_k(w) = \frac{\deg(w)}{2|E(G)|}$. We also know that if $G$ has sides $S_0$ and $S_1$ with $u\in S_0$, for all $w \in S_0$ we have that $\displaystyle \lim_{k\to\infty} \mu_{2k}(w) = \frac{\deg(w)}{|E(G)|}$ and for all $w \in S_1$ we have that $\displaystyle \lim_{k\to\infty} \mu_{2k}(w) = 0$. Similarly, for all $w \in S_1$ we have that $\displaystyle \lim_{k\to\infty} \mu_{2k+1}(w) = \frac{\deg(w)}{|E(G)|}$ and for all $w \in S_0$ we have that $\displaystyle \lim_{k\to\infty} \mu_{2k+1}(w) = 0$. Thus for all $w \in S_0$ we have that $\xi^0(w) = \frac{\deg(w)}{2|E(G)|}$ and for all $w \in S_1$ we have that $\xi^0(w) = \frac{-\deg(w)}{2|E(G)|}$. Also $\xi^1 = -\xi^0$.

We prove the claim for $\xi^0$ - by symmetry it will hold for $\xi^1$ as well since $\xi^1 = -\xi^0$. If the sides of $G$ are $S_0$ and $S_1$, then for $w\in S_0$ $\xi^0(w) = \frac{\deg(w)}{2|E(G)|}$ and for $w\in S_1$ $\xi^0(w) = -\frac{\deg(w)}{2|E(G)|}$. Then we have that for all $t, w \in G$ such that $t \sim w$, the product $\xi(t)\xi(w) < 0$. We also have that by Lemma~\ref{lem: for 1/2 algorithm nice on stat dib}, $\xi^0(w)A_i(\xi^0)(w) \geq \frac{1}{4|E(G)|^2} > 0$ holds for all $w \in G$ and for all $0 \leq i \leq |V(G)|- 2$.
\end{proof}

We now know that $\xi^0$ and $\xi^1$ are on the interior of the region satisfying the inequalities. We can hence proceed similarly to section 4 to show that $\xi_k$ will eventually satisfy the inequalities and thus $W_k$ will be the sum of positive mass.

\begin{corollary} \label{cor: for 1/2, dist is eventually sum of pos mass}
For any Guvab where $W = \frac{1}{2}$ and $\beta < 1$, there exists $N$ such that for all $k \geq N$, 
$$W_k = \frac{1}{2}\sum_{w \in G} |\xi_k(w)|.$$
\end{corollary}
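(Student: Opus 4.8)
The plan is to mirror the structure of the $W=1$ case (Section 4) almost verbatim, using the continuity-and-limit argument that gave Theorem~\ref{thm:winnie the when it's constant for W = 1 fairy}. The key input is Corollary~\ref{cor: stat dib on interior for w=1/2}, which tells us that both $\xi^0$ and $\xi^1$ lie strictly in the interior of the region $R$ of distributions satisfying the tree-based transport inequalities $\mathcal{I}(G,T,\mathcal{O},\xi)$. First I would fix an arbitrary spanning tree $T$ and $r$-monotone ordering $\mathcal{O}$, so that the region $R$ is pinned down and Corollary~\ref{cor: when ineqs, dist is sum of pos mass} becomes applicable the moment $\xi_k \in R$.

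The main steps are then as follows. I would observe that every inequality defining $\mathcal{I}(G,T,\mathcal{O},\xi)$ can be written as $f(\xi) > 0$ for a continuous function $f : \mathbb{R}^{|V(G)|} \to \mathbb{R}$ (the maps $\xi \mapsto \xi(w)$ and, by Lemma~\ref{lem: linear}, $\xi \mapsto A_i(\xi)(w)$ are all linear, hence continuous, and the inequalities are products of these). Since $\xi^0, \xi^1$ lie in the open region $R$ by Corollary~\ref{cor: stat dib on interior for w=1/2}, continuity yields some $\varepsilon > 0$ such that any $\xi$ within $\varepsilon$ (in the sup norm over vertices) of either $\xi^0$ or $\xi^1$ also lies in $R$. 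Next, by Corollary~\ref{corolawrence: xi0,xi1 are well defined} we have $\lim_{k\to\infty}\xi_{2k} = \xi^0$ and $\lim_{k\to\infty}\xi_{2k+1} = \xi^1$, so there is an $N'$ past which $\xi_{2k}$ is within $\varepsilon$ of $\xi^0$ and $\xi_{2k+1}$ is within $\varepsilon$ of $\xi^1$. Hence for all $k \geq 2N'$ we get $\xi_k \in R$, and Corollary~\ref{cor: when ineqs, dist is sum of pos mass} gives $W_k = W(\xi_k, \Tilde{\textbf{0}}) = \tfrac{1}{2}\sum_{w\in G}|\xi_k(w)|$. Taking $N = 2N'$ completes the argument.

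I do not expect a serious obstacle here, since this corollary is genuinely the analogue of the chain of results culminating in Theorem~\ref{thm:winnie the when it's constant for W = 1 fairy}, with Corollary~\ref{cor: stat dib on interior for w=1/2} substituting for Corollary~\ref{cor: xi interior}. The one point that deserves care is that, unlike the $W=1$ case, the total mass $\sum_w |\xi_k(w)|$ is not constant: here $\nu_k \to \pi$ spreads mass over both sides, so $\xi_k$ does not sit on disjoint supports and the sum does not simplify to a fixed value. That is exactly why the conclusion is stated as an expression $\tfrac{1}{2}\sum_w |\xi_k(w)|$ rather than a constant, and it is consistent with the section's stated goal of later evaluating this expression and exhibiting exponential (rather than eventually-constant) behavior. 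So the only thing to be vigilant about is resisting the temptation to collapse the sum, and instead leaving it in the form the subsequent results will analyze.
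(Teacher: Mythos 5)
Your proposal is correct and matches the paper's proof essentially verbatim: the paper likewise fixes $T$ and $\mathcal{O}$, invokes Corollary~\ref{cor: stat dib on interior for w=1/2} to place $\xi^0,\xi^1$ in the interior of $R$, appeals to the same continuity-and-limit argument from Theorem~\ref{thm:winnie the when it's constant for W = 1 fairy} to get $\xi_k\in R$ for large $k$, and then applies Corollary~\ref{cor: when ineqs, dist is sum of pos mass}. Your closing caution about not collapsing $\frac{1}{2}\sum_{w}|\xi_k(w)|$ to a constant is also exactly the point the paper respects by deferring that computation to Lemma~\ref{lem: sum of pos mass for 1/2}.
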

\begin{proof}
We know by Corollary \ref{cor: stat dib on interior for w=1/2} that $\xi^0$ and $\xi^1$ lie on the interior of $R$. Therefore, as in the proof of Theorem \ref{thm:winnie the when it's constant for W = 1 fairy}, by the formal definition of a limit there exists some $N$ such that for all $k \geq N$, we have that $\xi_k \in R$ and thus $\mathcal{I}(G,T,\mathcal{O},\xi_k)$ are satisfied. We note that Corollary \ref{cor: when ineqs, dist is sum of pos mass} holds for any Guvab, including the ones we are currently inspecting, so if the tree-based transport inequalities $\mathcal{I}(G,T,\mathcal{O},\xi_k)$ are satisfied, $W_k = \frac{1}{2}\sum_{w \in G} |\xi_k(w)|$. Thus for all $k \geq N$, we have that $W_k = \frac{1}{2}\sum_{w \in G} |\xi_k(w)|.$
\end{proof}

We now know that eventually, the Wasserstein distance will be the sum of positive mass, so it remains to calculate the sum of positive mass. To do this, we will first need to define an auxiliary Markov chain and prove some properties of this Markov chain.

\begin{definition}
Let $s(\alpha)$ be a two-state Markov chain with states $s_0$ and $s_1$, where we start at $s_0$, and at all times we have an $\alpha$ chance of staying at our current state and a $1-\alpha$ chance of switching to the other state. Then define $(\sigma_\alpha)_k$ to be the probability distribution after $k$ steps of this Markov chain.
\end{definition}

\begin{lemma} \label{lem: mark the markov chain fairy}
For the Markov chain defined above, $(\sigma_\alpha)_k(s_0) = 0.5 + 0.5(2\alpha-1)^k$ and $(\sigma_\alpha)_k(s_1) = 0.5-0.5(2\alpha-1)^k$.
\end{lemma}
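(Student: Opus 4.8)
The plan is to solve the two-state Markov chain explicitly, which is a standard computation that can be carried out in several equivalent ways. The cleanest approach is to set up the recurrence for $(\sigma_\alpha)_k(s_0)$ directly, then verify the closed form by induction; alternatively one can diagonalize the $2\times 2$ transition matrix. I will sketch the recurrence-plus-induction route since it avoids introducing eigenvector machinery for such a small matrix.

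First I would write down the one-step recurrence. Since from $s_0$ we stay with probability $\alpha$ and from $s_1$ we switch to $s_0$ with probability $1-\alpha$, we have
\begin{align*}
(\sigma_\alpha)_{k+1}(s_0) &= \alpha\,(\sigma_\alpha)_k(s_0) + (1-\alpha)\,(\sigma_\alpha)_k(s_1).
\end{align*}
Using that $(\sigma_\alpha)_k(s_0) + (\sigma_\alpha)_k(s_1) = 1$ for all $k$ (the distribution is a probability distribution at every step), I would substitute $(\sigma_\alpha)_k(s_1) = 1 - (\sigma_\alpha)_k(s_0)$ to get the single-variable recurrence
\begin{align*}
(\sigma_\alpha)_{k+1}(s_0) &= (2\alpha - 1)\,(\sigma_\alpha)_k(s_0) + (1-\alpha).
\end{align*}

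Next I would prove the claimed formula $(\sigma_\alpha)_k(s_0) = 0.5 + 0.5(2\alpha-1)^k$ by induction on $k$. The base case $k=0$ holds because we start at $s_0$, so $(\sigma_\alpha)_0(s_0) = 1 = 0.5 + 0.5(2\alpha-1)^0$. For the inductive step, I would substitute the hypothesis into the recurrence and simplify: $(2\alpha-1)\bigl(0.5 + 0.5(2\alpha-1)^k\bigr) + (1-\alpha)$ should collapse to $0.5 + 0.5(2\alpha-1)^{k+1}$, which is a routine algebraic check (the constant terms $0.5(2\alpha-1) + (1-\alpha)$ combine to $0.5$). Finally, the formula for $(\sigma_\alpha)_k(s_1)$ follows immediately from $(\sigma_\alpha)_k(s_1) = 1 - (\sigma_\alpha)_k(s_0)$.

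There is no real obstacle here; the statement is an elementary closed-form solution of a two-state chain, and the only thing to be careful about is handling the edge cases $\alpha \in \{0, 1\}$ correctly (where $2\alpha - 1 = \mp 1$), though the induction goes through verbatim for all $\alpha \in [0,1]$ since it never divides by anything. The main point worth stating cleanly is the reduction to a one-variable affine recurrence via the probability-conservation identity, after which both induction and the simplification are mechanical.
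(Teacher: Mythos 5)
Your proposal is correct and takes essentially the same approach as the paper: an induction on $k$ using the one-step transition probabilities, with the base case $k=0$ given by the chain starting at $s_0$. The only cosmetic difference is that you reduce to a single affine recurrence for $(\sigma_\alpha)_k(s_0)$ via the normalization $(\sigma_\alpha)_k(s_0)+(\sigma_\alpha)_k(s_1)=1$ and recover $s_1$ by complement, whereas the paper carries both components through the induction explicitly; the algebra is identical either way.
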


\begin{proof}
We will proceed by induction on $k$, using the transition probabilities to go from $(\sigma_\alpha)_k$ to $(\sigma_\alpha)_{k+1}$.

Base case: When $k=0$, we know that, since the Markov chain starts at $s_0$, we have $(\sigma_\alpha)_0(s_0) = 1 = 0.5+0.5(2\alpha-1)^0$ and $(\sigma_\alpha)_0(s_1) = 0 = 0.5-0.5(2\alpha-1)^0$. 

Inductive step: Suppose $(\sigma_\alpha)_k(s_0) = 0.5+0.5(2\alpha-1)^k$ and $(\sigma_\alpha)_k(s_1) = 0.5-0.5(2\alpha-1)^k$. We know that 
\begin{align*}
(\sigma_\alpha)_{k+1}(s_0) &= \alpha(\sigma_\alpha)_k(s_0) + (1-\alpha)(\sigma_\alpha)_k(s_1)\\ &= \alpha(0.5+0.5(2\alpha-1)^k) + (1-\alpha)(0.5-0.5(2\alpha-1)^k)\\ &= 0.5 + (2\alpha-1)\cdot 0.5(2\alpha-1)^k\\ &= 0.5 + 0.5(2\alpha-1)^{k+1}. 
\end{align*}
Similarly,
\begin{align*}
(\sigma_\alpha)_{k+1}(s_1) &= (1-\alpha)(\sigma_\alpha)_k(s_0) + \alpha(\sigma_\alpha)_k(s_1)\\ &= (1-\alpha)(0.5+0.5(2\alpha-1)^k) + \alpha(0.5-0.5(2\alpha-1)^k)\\ &= 0.5+(2\alpha-1)\cdot (-0.5)(2\alpha-1)^k\\ &= 0.5 - 0.5(2\alpha-1)^{k+1}.
\end{align*}
\end{proof}

We now have all the tools we need to explicitly calculate the sum of positive mass. The next lemma tells us what the sum of positive mass will be.

\begin{lemma} \label{lem: sum of pos mass for 1/2}
When $\beta < 1$ and $W = \frac{1}{2}$, there exists some $N$ such that for all $k \geq N$, we either have that $$\frac{1}{2}\sum_{w \in G} |\xi_k(w)| = 0.5 + 0.5(1-2\beta)^k$$ or that $$\frac{1}{2}\sum_{w \in G} |\xi_k(w)| = 0.5 - 0.5(1-2\beta)^k.$$
\end{lemma}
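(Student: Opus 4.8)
The plan is to invoke Corollary~\ref{cor: for 1/2, dist is eventually sum of pos mass}, which already tells us that for large $k$ the Wasserstein distance equals the sum of positive mass $\frac{1}{2}\sum_{w\in G}|\xi_k(w)|$, and then to evaluate this sum explicitly by tracking how much total mass each of $\mu_k$ and $\nu_k$ places on each side of the bipartition. Write the sides of $G$ as $S_0$ and $S_1$ and assume without loss of generality that $u\in S_0$. The key preliminary observation is that, for large $k$, the sign pattern of $\xi_k$ is completely pinned down: by Corollary~\ref{cor: stat dib on interior for w=1/2} we have $\xi_{2k}\to\xi^0$ and $\xi_{2k+1}\to\xi^1=-\xi^0$, where $\xi^0$ is strictly positive on $S_0$ and strictly negative on $S_1$, bounded away from $0$ in each case. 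Hence for all sufficiently large $k$, $\xi_k$ has one sign throughout $S_0$ and the opposite sign throughout $S_1$, so the sum of positive mass is simply $\sum_{w\in S_i}\xi_k(w)=\mu_k(S_i)-\nu_k(S_i)$ for whichever side $S_i$ carries the positive mass (namely $S_0$ when $k$ is even and $S_1$ when $k$ is odd).

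First I would compute the side masses of $\mu_k$. Since $\alpha=0$ the walk $X$ has no laziness, so starting from $u\in S_0$ it alternates sides deterministically: $\mu_k$ is supported entirely on $S_0$ when $k$ is even and entirely on $S_1$ when $k$ is odd, so the relevant $\mu_k(S_i)$ always equals $1$. The interesting computation is $\nu_k(S_i)$. Here I would observe that, because $G$ is bipartite, every neighbor of a vertex lies on the opposite side; therefore a non-lazy step of $Y$ (taken with probability $1-\beta$) switches sides, while a lazy step (probability $\beta$) preserves the side. Consequently the side of $Y_k$ evolves exactly as the two-state Markov chain $s(\beta)$ of Lemma~\ref{lem: mark the markov chain fairy}, started from the side containing $v$. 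That lemma then yields $\nu_k(\text{side of }v)=0.5+0.5(2\beta-1)^k$ and $\nu_k(\text{other side})=0.5-0.5(2\beta-1)^k$.

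It then remains to combine these ingredients, which is routine bookkeeping. Using the identity $(2\beta-1)^k=(-1)^k(1-2\beta)^k$ to rewrite the auxiliary-chain formula in the form appearing in the statement, I would split into the two cases according to whether $v\in S_0$ (same side as $u$) or $v\in S_1$ (opposite side). In each case, checking both parities of $k$, the quantity $\mu_k(S_i)-\nu_k(S_i)=1-\nu_k(S_i)$ collapses to a single expression independent of parity: one obtains $0.5-0.5(1-2\beta)^k$ when $u$ and $v$ lie on the same side, and $0.5+0.5(1-2\beta)^k$ when they lie on opposite sides, which is exactly the claimed dichotomy. The one step requiring genuine care is the identification of the side process of $Y$ with $s(\beta)$, together with the sign bookkeeping that reconciles $(2\beta-1)^k$ with $(1-2\beta)^k$ across the parity cases; every other ingredient follows directly from the reductions already established.
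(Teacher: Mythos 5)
Your proposal is correct and follows essentially the same route as the paper's proof: both reduce the computation to side masses of $\mu_k$ and $\nu_k$ on the bipartition, identify the side process of $Y$ with the two-state chain $s(\beta)$ of Lemma~\ref{lem: mark the markov chain fairy}, use the eventual sign pattern of $\xi_k$ (positive on one side, negative on the other) to equate the positive mass with a side sum, and then do the same parity bookkeeping with $(2\beta-1)^k$ versus $(1-2\beta)^k$. The only cosmetic difference is which vertex you normalize into $S_0$.
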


\begin{proof}
We know $G$ is bipartite; say it has sides $S_0$ and $S_1$. We know $0 = \alpha < \beta < 1$. Assume without loss of generality that $v \in S_0$. If $u$ is on side $S_0$, then eventually for $w\in S_0$, we have that $\xi_{2k}(w)$ gets arbitrarily close to $\displaystyle\frac{\deg(w)}{2|E(G)|}$ and for $w\in S_1$, we have that $\xi_{2k}(w)$ gets arbitrarily close to $\displaystyle-\frac{\deg(w)}{2|E(G)|}$. In particular, for some $N$, for all $k \geq N$ we have $\xi_{2k}(w) > 0$ if and only if $w\in S_0$. Then for some $N$, for all $k \geq N$, when $u\in S_0$, the total positive mass of $\xi_{2k}$ is $\displaystyle\sum_{w \in S_0} \xi_{2k}(w)$. Similarly, for some $N$, for all $k \geq N$ we have $\xi_{2k+1}(w) > 0$ if and only if $w\in S_1$. Thus, for some $N$, for all $k \geq N$, when $u\in S_0$, the total positive mass of $\xi_{2k+1}$ is $\displaystyle\sum_{w \in S_1} \xi_{2k+1}(w)$. 

By an analogous argument, when $u\in S_1$, for some $N$, for all $k \geq N$, the total positive mass of $\xi_{2k}$ is $\displaystyle\sum_{w \in S_0} \xi_{2k}(w)$. Similarly, for some $N$, for all $k \geq N$, the total positive mass of $\xi_{2k+1}$ is $\displaystyle\sum_{w \in S_1} \xi_{2k+1}(w)$.

Thus, to calculate what the sum of positive mass eventually equals, we simply consider how much mass of $\mu$ and $\nu$ is on each side of the bipartite graph so that we know how much mass of $\xi$ is on each side. We note that for any random walk with laziness $\beta$, at all steps the mass on a given side has a probability $\beta$ of staying on that side and a probability $1-\beta$ of moving to the other side, since any mass that moves along an edge moves to the other side. Thus the mass of $\nu_k$ on $S_0$ and $S_1$ behaves identically to the mass of $s(\beta)$ on $s_0$ and $s_1$. In other words, the amount of mass of $\nu_k$ on $S_0$ is $(\sigma_\beta)_k(s_0) = 0.5 + 0.5(2\beta-1)^k$ and the amount of mass of $\nu_k$ on $S_1$ is $(\sigma_\beta)_k(s_1) = 0.5 - 0.5(2\beta-1)^k$ by Lemma \ref{lem: mark the markov chain fairy}. Similarly, if $u\in S_0$, then the amount of mass of $\mu_k$ on $S_0$ is $(\sigma_0)_k(s_0) = 0.5 + 0.5(-1)^k$ and the amount of mass of $\mu_k$ on $S_1$ is $(\sigma_0)_k(s_1) = 0.5 - 0.5(-1)^k$. By symmetry, if $u\in S_1$, then the amount of mass of $\mu_k$ on $S_0$ is $(\sigma_0)_k(s_1) = 0.5 - 0.5(-1)^k$ and the amount of mass of $\mu_k$ on $S_1$ is $(\sigma_0)_k(s_0) = 0.5 + 0.5(-1)^k$.

This means that if $u\in S_0$, 
\begin{align*}
\sum_{w \in S_0} \xi_{k}(w) &= \sum_{w \in S_0} \mu_{k}(w) - \sum_{w \in S_0} \nu_{k}(w)\\ &= (\sigma_0)_k(s_0) - (\sigma_\beta)_k(s_0)\\ &= 0.5 + 0.5(-1)^k - (0.5 + 0.5(2\beta-1)^k)
\end{align*}
and 
\begin{align*}
\sum_{w \in S_1} \xi_{k}(w) &= \sum_{w \in S_1} \mu_{k}(w) - \sum_{w \in S_1} \nu_{k}(w)\\ &= (\sigma_0)_k(s_1) - (\sigma_\beta)_k(s_1)\\ &= 0.5 - 0.5(-1)^k - (0.5 - 0.5(2\beta-1)^k).
\end{align*}
Then the total positive mass of $\xi_{2k}$ is $$\sum_{w \in S_0} \xi_{2k}(w) = 0.5 + 0.5(-1)^{2k} - (0.5 + 0.5(2\beta-1)^{2k}) = 0.5 - 0.5(1-2\beta)^{2k}$$ and the total positive mass of $\xi_{2k+1}$ is $$\sum_{w \in S_1} \xi_{2k+1}(w) = 0.5 - 0.5(-1)^{2k+1} - (0.5 - 0.5(2\beta-1)^{2k+1}) = 0.5 - 0.5(1-2\beta)^{2k+1}.$$ Thus, for some $N$, the sum of the positive mass of $\xi_k$ is $0.5 - 0.5(1-2\beta)^k$ for all $k \geq N$. 

If $u\in S_1$, then we have that $\displaystyle\sum_{w \in S_0} \xi_{k}(w)= (\sigma_0)_k(s_1) - (\sigma_\beta)_k(s_0)$ and we have that $\displaystyle\sum_{w \in S_1} \xi_{k}(w) = (\sigma_0)_k(s_0) - (\sigma_\beta)_k(s_1)$. By calculating this out analogously to above, we see that if $u\in S_1$ there exists some $N$ such that the sum of positive mass of $\xi_k$ is $0.5 + 0.5(1-2\beta)^k$ for all $k \geq N$.
\end{proof}

We now know that the Wasserstein distance will be the sum of positive mass, and we know exactly what the sum of positive mass will eventually be. Thus, we know exactly what the Wasserstein distance will eventually be. The next theorem therefore states explicitly the rate of convergence of the Wasserstein distance when $\beta < 1$ and $W = \frac{1}{2}$.

\begin{theorem} \label{thm: washington the w=1/2 convergence theorem}
For any Guvab where $W = \frac{1}{2}$ and $\beta < 1$, for some $N$ it will be true that for all $k \geq N$, we have that $|W_k - \frac{1}{2}| = 0.5|1-2\beta|^k$.
\end{theorem}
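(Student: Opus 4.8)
The plan is to combine the two results that immediately precede the theorem, since together they pin down $W_k$ exactly for all large $k$. First I would invoke Corollary~\ref{cor: for 1/2, dist is eventually sum of pos mass}, which supplies some $N_1$ such that for all $k \geq N_1$ the Wasserstein distance equals the sum of positive mass, namely $W_k = \frac{1}{2}\sum_{w \in G} |\xi_k(w)|$. This reduces the entire task to evaluating that sum of positive mass for large $k$.

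Next I would apply Lemma~\ref{lem: sum of pos mass for 1/2}, which provides some $N_2$ such that for all $k \geq N_2$ the sum of positive mass is either $0.5 + 0.5(1-2\beta)^k$ or $0.5 - 0.5(1-2\beta)^k$, with the sign determined by which side of the bipartite graph the starting vertex $u$ lies on relative to $v$. Setting $N = \max(N_1, N_2)$ then yields, for all $k \geq N$, that $W_k = 0.5 \pm 0.5(1-2\beta)^k$, and hence $W_k - \frac{1}{2} = \pm\, 0.5(1-2\beta)^k$.

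Finally I would take absolute values. In either sign case we obtain
$$\left|W_k - \tfrac{1}{2}\right| = \left|0.5(1-2\beta)^k\right| = 0.5\,|1-2\beta|^k,$$
using the identity $\left|(1-2\beta)^k\right| = |1-2\beta|^k$. This holds regardless of the parity of $k$ and the sign of $1-2\beta$, which is exactly why passing to the absolute value collapses both cases into the single claimed formula.

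Because this theorem is essentially a corollary of the two lemmas preceding it, I do not anticipate any genuine obstacle. The only point needing mild care is the bookkeeping of the sign ambiguity coming out of Lemma~\ref{lem: sum of pos mass for 1/2}, together with the observation that this ambiguity disappears once the absolute value is taken, so that the stated identity $|W_k - \frac{1}{2}| = 0.5|1-2\beta|^k$ is clean and uniform in $k$.
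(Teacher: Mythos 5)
Your proposal is correct and follows exactly the paper's own proof: invoke Corollary~\ref{cor: for 1/2, dist is eventually sum of pos mass} to get $W_k = \frac{1}{2}\sum_{w\in G}|\xi_k(w)|$ for $k \geq N_1$, invoke Lemma~\ref{lem: sum of pos mass for 1/2} to evaluate that sum as $0.5 \pm 0.5(1-2\beta)^k$ for $k \geq N_2$, and take $N = \max(N_1,N_2)$ and absolute values. No gaps; the sign bookkeeping is handled the same way in the paper.
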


\begin{proof}
Corollary \ref{cor: for 1/2, dist is eventually sum of pos mass} tells us that for some $N_1$, we will have $W_k = \frac{1}{2}\sum_{w\in G} |\xi_k(w)|$ for all $k \geq N_1$. Lemma \ref{lem: sum of pos mass for 1/2} tells us that for some $N_2$, we will have $\frac{1}{2}\sum_{w\in G} |\xi_k(w)| = 0.5 + 0.5(1-2\beta)^k$ for all $k \geq N_2$ or we will have $\frac{1}{2}\sum_{w\in G} |\xi_k(w)| = 0.5 - 0.5(1-2\beta)^k$ for all $k \geq N_2$. This means that for all $k \geq N_2$, we have $|(\frac{1}{2}\sum_{w\in G} |\xi_k(w)|) - \frac{1}{2}| = 0.5|1-2\beta|^k$. Thus, for all $k \geq \max(N_1,N_2)$, we have $$\left|W_k - \frac{1}{2}\right| = \left|\left(\frac{1}{2}\sum_{w\in G} |\xi_k(w)|\right) - \frac{1}{2}\right| = 0.5|1-2\beta|^k.$$
\end{proof}

Finally, we want to characterize when the Wasserstein distance is eventually constant when $W = \frac{1}{2}$ and $\beta < 1$. This will fit into our larger characterization of eventual constancy for all Guvabs with $\beta < 1$.

\begin{corollary}\label{cor: wendy the when it's constant for W = 1/2 fairy}
When $W = \frac{1}{2}$ and $\beta < 1$, we have that $\rho < \infty$ if and only if $\beta = \frac{1}{2}$.
\end{corollary}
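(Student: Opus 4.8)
The plan is to read off both directions directly from the explicit convergence formula of Theorem~\ref{thm: washington the w=1/2 convergence theorem}, which supplies an $N$ with $|W_k - \frac{1}{2}| = 0.5|1-2\beta|^k$ for all $k \geq N$. Here $\rho < \infty$ is understood as ``$\{W_k\}$ is eventually constant'' (the analogue, with limit value $\frac{1}{2}$, of Definition~\ref{rho}). The entire argument then reduces to analyzing the geometric quantity $0.5|1-2\beta|^k$, so I expect no serious obstacle: this corollary is essentially a case split on the previous theorem.

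First I would handle the forward implication $\beta = \frac{1}{2} \Rightarrow \rho < \infty$. Substituting $\beta = \frac{1}{2}$ makes $|1-2\beta| = 0$, so the formula gives $|W_k - \frac{1}{2}| = 0$, that is $W_k = \frac{1}{2}$, for every $k \geq N$. Hence $\{W_k\}_{k \geq N}$ is constant and $\rho \leq N < \infty$.

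For the converse I would argue the contrapositive, showing that $\beta \neq \frac{1}{2}$ forces $\rho = \infty$. Since these Guvabs satisfy $0 = \alpha < \beta < 1$, we have $\beta \in (0,1)$, so $1-2\beta \in (-1,1)$; together with $\beta \neq \frac{1}{2}$ this gives $|1-2\beta| \in (0,1)$. Consequently $0.5|1-2\beta|^k$ is a strictly positive, strictly decreasing sequence for $k \geq N$, so $|W_k - \frac{1}{2}|$ is strictly decreasing and never stabilizes. Therefore $W_k$ takes infinitely many distinct values and is not eventually constant, giving $\rho = \infty$.

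The one point that warrants a moment's care is verifying that $|1-2\beta|$ lies \emph{strictly} between $0$ and $1$, which is precisely where the standing hypotheses $0 = \alpha < \beta < 1$ are used: $\beta > 0$ together with $\beta < 1$ forces $|1-2\beta| < 1$ (ruling out the degenerate boundary values where the distance could fail to decay), while $\beta \neq \frac{1}{2}$ forces $|1-2\beta| > 0$ (ensuring the decay is genuine rather than immediate constancy). Once these two strict inequalities are in hand, both implications follow immediately.
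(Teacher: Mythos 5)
Your proposal is correct and follows the same route as the paper, which simply cites Theorem~\ref{thm: washington the w=1/2 convergence theorem} as giving the result directly; you have just written out the case split on $|1-2\beta|$ explicitly. The observation that $0 < \beta < 1$ guarantees $|1-2\beta| < 1$, so that $\beta \neq \frac{1}{2}$ yields a strictly positive, strictly decreasing deviation from $\frac{1}{2}$ (hence no eventual constancy), is exactly the intended reading.
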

\begin{proof}
This follows directly from Theorem \ref{thm: washington the w=1/2 convergence theorem}.
\end{proof}

\section{Convergence when $W = 0$}
In this section we consider the case of Guvabs where $W = 0$ and $\beta < 1$. Recall that these are exactly the Guvabs enumerated in Theorem \ref{thm: 0-convergence} for which $\beta < 1$. We start by showing that the rate of convergence of $\{W_{2k}\}$ is exponential when it is not eventually constant. By an analogous argument, the rate of convergence of $\{W_{2k+1}\}$ is exponential when it is not eventually constant. We will then investigate exactly when $\{W_k\}$ is eventually constant.

Theorem \ref{thm: Simba the sim-an-exponential when W=0 fairy} states that unless it is eventually constant, the rate of convergence of $\{W_{2k}\}$ is exponential, and in particular $W_{2k} \sim c\cdot \lambda_{\even}^{2k}$. We go about proving this by showing in the next two lemmas that $W_{2k}$ must be one of finitely many expressions, all of which are approximately some exponential.

The next lemma shows that $W_{2k}$ must be one of finitely many expressions.

\begin{lemma}\label{lem: finite function set}
For any Guvab $\mathcal{G}$, there exists a finite set $F = \{f_i,f_2,\ldots, f_m\}$ of 1-Lipschitz functions $f_i: V(G) \to \mathbb{R}$ such that for all $k$ there exists $f \in F$ such that $\displaystyle W_k = \sum_{w\in G} f(w)\xi_k(w).$
\end{lemma}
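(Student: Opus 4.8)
The plan is to exploit Kantorovich duality, which the excerpt has already stated: for each $k$ we have $W_k = \max_{\ell \in L(G)} \sum_{w \in G} \ell(w)\xi_k(w)$, where $L(G)$ is the set of $1$-Lipschitz functions on $G$. The difficulty is that $L(G)$ is an infinite set, so a priori the maximizing $\ell$ could vary continuously with $k$. The key observation is that the set of achievable values $\{(\ell(w))_{w \in V(G)} : \ell \in L(G)\} \subset \mathbb{R}^{|V(G)|}$ is a \emph{polytope}: it is cut out by the finitely many linear constraints $\ell(w_1) - \ell(w_2) \le \dist(w_1,w_2)$ over all ordered pairs of vertices. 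For a fixed $k$, maximizing the linear functional $\ell \mapsto \sum_w \ell(w)\xi_k(w)$ over this polytope is a linear program, and hence (whenever the maximum is attained) it is attained at a vertex of the polytope. Since a polytope has only finitely many vertices, this immediately produces a finite candidate set.

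First I would make precise that the feasible region is unbounded in only a trivial direction (adding a constant to $\ell$), and that this direction does not affect the objective because $\xi_k$ is zero-sum: $\sum_w \xi_k(w) = \sum_w \mu_k(w) - \sum_w \nu_k(w) = 1 - 1 = 0$. Thus I can normalize, say by fixing $\ell(w_0) = 0$ for a distinguished vertex $w_0$, without changing any objective value $\sum_w \ell(w)\xi_k(w)$. After this normalization the feasible region becomes a genuine (bounded) polytope $\mathcal{P} \subset \mathbb{R}^{|V(G)|}$, namely the set of normalized $1$-Lipschitz functions, and it has a finite vertex set $\{f_1, \ldots, f_m\}$ that does \emph{not} depend on $k$. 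Each $f_i$ is $1$-Lipschitz by construction.

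Next I would invoke the fundamental theorem of linear programming: for each fixed $k$, the linear functional $\ell \mapsto \sum_w \ell(w)\xi_k(w)$ attains its maximum over the bounded polytope $\mathcal{P}$, and the maximum is attained at some vertex $f_i$ of $\mathcal{P}$. Combined with Kantorovich duality (and the normalization argument showing the max over $\mathcal{P}$ equals the max over all of $L(G)$), this yields that for every $k$ there is some $f \in F = \{f_1,\ldots,f_m\}$ with $W_k = \sum_{w \in G} f(w)\xi_k(w)$, which is exactly the claim.

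The main obstacle is the careful handling of the normalization and the unboundedness: I must verify that restricting to normalized $1$-Lipschitz functions loses nothing (both in the objective value, using zero-sum-ness of $\xi_k$, and in $1$-Lipschitzness, since shifting by a constant preserves the Lipschitz property), and that the resulting region is genuinely a bounded polytope so that "vertex of the polytope" is well-defined and the vertex set is finite and $k$-independent. A clean way to see boundedness is that connectedness of $G$ forces $|\ell(w) - \ell(w_0)| \le \dist(w,w_0) \le |V(G)|$ for every vertex $w$. Once these points are settled, the finiteness of $F$ and the existence of the required $f$ for each $k$ follow directly from standard linear programming, so the remainder is routine.
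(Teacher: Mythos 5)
Your proposal is correct and follows essentially the same route as the paper: both invoke Kantorovich duality, normalize away the constant-shift direction (the paper requires $\sum_w \ell(w) = 0$ where you fix $\ell(w_0)=0$, an immaterial difference justified in both cases by $\xi_k$ being zero-sum), observe that the normalized $1$-Lipschitz functions form a bounded polytope, and conclude via the fundamental theorem of linear programming that the maximum is always attained at one of the finitely many vertices.
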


\begin{proof}
We consider the set $L$ of possible 1-Lipschitz functions $\ell$ on the graph $G$ such that $\sum_{w \in V(G)} \ell(w) = 0$ (any other 1-Lipschitz function can be transformed into such a 1-Lipschitz function by adding some value to all entries). The criteria for a function $\ell$ to be a 1-Lipschitz function are that for each pair of vertices $w_1$ and $w_2$ we have that $\ell(w_1) - \ell(w_2) \leq d(w_1,w_2)$ and $\ell(w_2) - \ell(w_1) \leq d(w_1,w_2)$. We also have that $\sum_{w\in G} \ell(w) = 0$. These each form hyperplanes in $\mathbb R^{|V(G)|}$. Additionally, from these criteria we know that none of the entries of $\ell$ can be more than $|V(G)|$ because then since the max distance between any two vertices is $|V(G)|$ there would be no negative entries. Thus, the set of 1-Lipschitz functions forms a closed set bounded by a polytope in $\mathbb R^{|V(G)|}$.  
For any cost function $C$ on the graph $G$, we have that $\sum_{w \in V(G)} C(w)\ell(w)$ is a linear function on $L$. Thus $\displaystyle \argmax_{\ell \in L} \sum_{w \in V(G)} C(w)\ell(w)$ is one of the corners of the polytope. There are finitely many of these corners, corresponding to finitely many 1-Lipschitz functions $\{f_1,f_2,\ldots, f_m\}$. We also know that $W_k = \max_{\ell \in L} \sum_{w \in V(G)} \xi_k(w)\ell(w)$, so it is thus maximizing the cost function $\xi$, and thus for all $k$ there exists $f \in F$ such that $\displaystyle W_k = \sum_{w\in G} f(w)\xi_k(w)$.
\end{proof}

We now know that $W_{2k}$ will be one of finitely many expressions. The next lemma shows that each of these expressions is approximately exponential.

\begin{lemma} \label{lem: 1-lip are exponential}
For any Guvab $\mathcal{G}$ for which $W=0$ and any 1-Lipschitz function $f$, there exists some $0 < \lambda_f < 1$ and some constant $c_f$ such that $$\sum_{w\in G} f(w)\xi_{2k}(w) \sim c_f \cdot \lambda_f^{2k}$$ unless there exists some $N$ such that for all $k > N$ we have that $\sum_{w\in G} f(w)\xi_{2k}(w) = 0$.
\end{lemma}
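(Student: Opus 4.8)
The plan is to expand $\sum_{w \in G} f(w)\xi_{2k}(w)$ as a real linear combination of $2k$-th powers of the eigenvalues supplied by Lemma~\ref{lem: eileen the eigval sum fairy}, and then read off the dominant term. First I would apply that lemma to write $\xi_k(w) = \sum_{i=1}^n c_i^w \lambda_i^k$ for each vertex $w$; multiplying by $f(w)$ and summing over $w$ gives
$$\sum_{w \in G} f(w)\xi_k(w) = \sum_{i=1}^n d_i \lambda_i^k, \qquad d_i := \sum_{w \in G} f(w)\, c_i^w.$$
Specializing to even indices yields $\sum_{w \in G} f(w)\xi_{2k}(w) = \sum_{i=1}^n d_i (\lambda_i^2)^k$. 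Since $P_\alpha$ and $P_\beta$ are transition matrices of reversible random walks, their eigenvalues are real and lie in $[-1,1]$, so each $\lambda_i^2 \in [0,1]$ and every term $(\lambda_i^2)^k$ is a nonnegative power of a nonnegative real. In particular there is no oscillation along the even subsequence, even when $\lambda_i = -1$.

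Next I would isolate the eigenvalues of modulus one. Because the $\lambda_i$ are real, $|\lambda_i| = 1$ forces $\lambda_i = \pm 1$, and in either case $(\lambda_i^2)^k = 1$ for all $k$; thus these eigenvalues contribute a constant $D := \sum_{|\lambda_i| = 1} d_i$. The crucial input from the hypothesis $W = 0$ is that $\xi^0 = \lim_{k\to\infty}\xi_{2k} = \Tilde{\textbf{0}}$: indeed $\lim_{k\to\infty} W_{2k} = W(\xi^0,\Tilde{\textbf{0}}) = 0$ and the Wasserstein distance vanishes only for equal distributions, so $\xi^0 = \Tilde{\textbf{0}}$. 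Consequently $\sum_{w \in G} f(w)\xi_{2k}(w) \to 0$, while in the expansion the terms with $|\lambda_i| < 1$ already tend to $0$; comparing forces $D = 0$, leaving
$$\sum_{w \in G} f(w)\xi_{2k}(w) = \sum_{i:\, |\lambda_i| < 1} d_i (\lambda_i^2)^k,$$
a sum over eigenvalues with $0 \le \lambda_i^2 < 1$.

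Finally I would group terms by the distinct values of $\lambda_i^2$, writing the sum as $\sum_j e_j \mu_j^k$ with the $\mu_j \in [0,1)$ distinct, $e_j$ the combined coefficients, and noting that any $\mu_j = 0$ term vanishes for $k \ge 1$. If every $e_j$ with $\mu_j > 0$ is zero, then $\sum_{w \in G} f(w)\xi_{2k}(w) = 0$ for all $k \ge 1$, which is exactly the \textquotedblleft unless\textquotedblright\ alternative. Otherwise let $\mu_{\max}$ be the largest $\mu_j > 0$ with $e_j \neq 0$ and $e_{\max}$ its coefficient; dividing by $e_{\max}\mu_{\max}^k$ and using $(\mu_j/\mu_{\max})^k \to 0$ for every remaining term gives $\sum_{w \in G} f(w)\xi_{2k}(w) \sim e_{\max}\mu_{\max}^k = c_f\, \lambda_f^{2k}$ with $c_f := e_{\max}$ and $\lambda_f := \sqrt{\mu_{\max}} \in (0,1)$.

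The main obstacle I anticipate is the step establishing $D = 0$: ruling out a residual constant contribution from the modulus-one eigenvalues rests entirely on the convergence $\xi^0 = \Tilde{\textbf{0}}$ extracted from $W=0$, combined with reality of the spectrum guaranteeing that the $\lambda = -1$ eigenvalue (which occurs precisely in the bipartite $\alpha = 0$ regime) contributes $+1$ rather than an oscillating $(-1)^k$ on even steps. Once the modulus-one terms are eliminated, the asymptotic extraction is routine.
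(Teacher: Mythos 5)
Your proposal is correct and follows essentially the same route as the paper: expand $\sum_{w\in G} f(w)\xi_{2k}(w)$ via Lemma~\ref{lem: eileen the eigval sum fairy} as a combination of the $(\lambda_i^2)^k$, group by distinct values, and extract the largest surviving base. The one substantive difference is that you explicitly eliminate the contribution of the modulus-one eigenvalues by arguing that $W=0$ forces $\xi^0=\Tilde{\textbf{0}}$ and hence $D=0$; the paper's proof leaves this step implicit even though it is exactly what is needed to guarantee $\lambda_f<1$ rather than $\lambda_f=1$, so your version is, if anything, the more complete of the two.
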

\begin{proof}
Assume that there does not exist any $N$ such that for all $k > N$ we have that $\sum_{w\in G} f(w)\xi_{2k}(w) = 0$.

We know by Lemma \ref{lem: eileen the eigval sum fairy} that for all vertices $w$, there exist some constants $c^w_i$ such that for all $k \geq 1$, $$\xi_{2k}(w) = \sum_{i = 1}^m c^w_i \lambda_i^{2k} = \sum_{i = 1}^m c^w_i (\lambda_i^2)^k = \sum_{i = 1}^n c^w_i (\lambda_i^2)^k$$ where in the last sum the $\lambda_i^2$ are all distinct positive constants (by combining like terms in the sum with $m$ terms to get a sum with $n$ terms). Then $$\sum_{w\in G} f(w)\xi_{2k}(w) = \sum_{w\in G} f(w)\sum_{i = 1}^n c^w_i (\lambda_i^2)^k.$$ Thus, there exist constants $c_f^1, \ldots c_f^n$ such that $\sum_{w\in G} f(w)\xi_{2k}(w) = \sum_{i=1}^n c_f^i (\lambda_i^2)^k.$ Let $\lambda_f^2 = \max_{i, c_f^i \neq 0} \lambda_i^2$ (this is well-defined since if it wasn't well defined we would have $\sum_{w\in G} f(w)\xi_{2k}(w) = 0$ for all $k \geq 1$). Let $c_f$ be the constant corresponding to this $\lambda_f^2$. Then $$\frac{\sum_{w\in G} f(w)\xi_{2k}(w)}{c_f\cdot \lambda_f^{2k}} = \frac{\sum_{i=1}^n c_f^i (\lambda_i^2)^k}{c_f\cdot \lambda_f^{2k}} = 1 + O(c^{2k}),$$ where $0 < c < 1$. Thus we have that $$\sum_{w\in G} f(w)\xi_{2k}(w) \sim c_f \cdot \lambda_f^{2k}.$$ 
\end{proof}

We now have all the pieces we need to show that $W_{2k}$ is approximately some exponential. The following theorem finishes off the proof.

\begin{theorem} \label{thm: Simba the sim-an-exponential when W=0 fairy}
For any Guvab $\mathcal{G}$ for which $W=0$ and $\{W_{2k}\}$ is not eventually constant, we have that there exists some $0 < \lambda_{\emph{\even}} < 1$ and some $c > 0$, such that $W_{2k} \sim c\cdot \lambda_{\emph{\even}}^{2k}$. 
\end{theorem}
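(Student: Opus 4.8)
The plan is to combine the two preceding lemmas: Lemma~\ref{lem: finite function set} reduces $W_{2k}$ to a maximum of finitely many fixed linear functionals of $\xi_{2k}$, and Lemma~\ref{lem: 1-lip are exponential} controls each such functional asymptotically. Write $g_f(k) := \sum_{w\in G} f(w)\xi_{2k}(w)$ for $f \in F$. By Lemma~\ref{lem: finite function set}, for each $k$ there is some $f \in F$ with $W_{2k} = g_f(k)$; since every $f \in F$ is $1$-Lipschitz, Kantorovich duality gives $W_{2k} \geq g_f(k)$ for all $f \in F$, so $W_{2k} = \max_{f\in F} g_f(k)$. Applying Lemma~\ref{lem: 1-lip are exponential} to each $f \in F$, I would split $F$ into three classes: the set $Z$ of functions with $g_f(k)$ eventually $0$, and among the rest $S^+$ (those with $g_f(k) \sim c_f \lambda_f^{2k}$, $c_f>0$) and $S^-$ (those with $c_f<0$), where in both surviving cases $0<\lambda_f<1$ and $c_f\ne 0$.

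Next I would show $S^+ \neq \emptyset$. If it were empty, then for all large $k$ every $g_f(k)$ is either $0$ (for $f \in Z$) or negative (for $f \in S^-$), so $W_{2k} = \max_{f\in F} g_f(k) = 0$ eventually; this contradicts the hypothesis that $\{W_{2k}\}$ is not eventually constant (recall $W=0$ forces the limit to be $0$ and $W_{2k}\ge 0$ always). I would then set $\lambda_{\even} := \max_{f \in S^+} \lambda_f \in (0,1)$ and let $c := \max\{c_f : f \in S^+,\ \lambda_f = \lambda_{\even}\} > 0$. Because the functions in $Z$ contribute $0$ and those in $S^-$ contribute negative values for large $k$, while $W_{2k}\ge 0$, we have $W_{2k} = \max\bigl(\max_{f\in S^+} g_f(k),\, 0\bigr)$ for all sufficiently large $k$.

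The core computation is to divide through by $\lambda_{\even}^{2k}$ and pass to the limit. By Lemma~\ref{lem: eileen the eigval sum fairy} each $g_f(k)$ is a finite sum $\sum_i c_f^i(\lambda_i^2)^k$ of exponentials whose bases $\lambda_i^2$ are distinct and, for $f \in S^+$, all satisfy $\lambda_i^2 \le \lambda_f^2 \le \lambda_{\even}^2$. Thus $g_f(k)/\lambda_{\even}^{2k} = \sum_i c_f^i(\lambda_i^2/\lambda_{\even}^2)^k$ converges as $k\to\infty$: terms with $\lambda_i < \lambda_{\even}$ vanish, and the unique term (if any) with $\lambda_i = \lambda_{\even}$ leaves its coefficient. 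This limit equals $c_f$ when $\lambda_f = \lambda_{\even}$ and $0$ when $\lambda_f < \lambda_{\even}$. Since the maximum of finitely many convergent sequences converges to the maximum of their limits, $W_{2k}/\lambda_{\even}^{2k} \to \max(c, 0) = c > 0$, i.e.\ $W_{2k} \sim c\,\lambda_{\even}^{2k}$, which is exactly the claim with this choice of $\lambda_{\even}$ and $c$.

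The main obstacle, and the step requiring the most care, is the interchange of the maximum with the limit in the last paragraph: one must use the exact finite-exponential-sum representation of Lemma~\ref{lem: eileen the eigval sum fairy} (not merely the $\sim$ estimates) to guarantee that each normalized sequence $g_f(k)/\lambda_{\even}^{2k}$ genuinely converges, and one must handle possible ties among several $f \in S^+$ attaining $\lambda_{\even}$ by taking the largest of their leading coefficients. I would also verify that the harmless inclusion of the constant $0$ in the maximum, justified by $W_{2k} \ge 0$, does not affect the limit, which is immediate since $c > 0$.
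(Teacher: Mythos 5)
Your proposal is correct and takes essentially the same route as the paper's proof: combine Lemma~\ref{lem: finite function set} with Lemma~\ref{lem: 1-lip are exponential}, discard the functionals whose pairing with $\xi_{2k}$ is eventually zero, and show the maximum is eventually governed by the surviving functional(s) with the largest leading exponential. Your explicit handling of the negative-leading-coefficient class $S^-$ and the normalization by $\lambda_{\even}^{2k}$ before interchanging max and limit are marginally more careful than the paper's phrasing (which asserts positivity of the leading coefficients without comment), but the substance of the argument is identical.
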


\begin{proof}
By Lemma \ref{lem: finite function set}, there exists some set $F = \{f_1,\ldots f_n\}$ of 1-Lipschitz functions $f_i: V(G) \to \mathbb{R}$ such that for all $k$ there exists $f \in F$ such that $\displaystyle W_{2k} = \sum_{w\in G} f(w)\xi_{2k}(w)$. Furthermore, by Lemma \ref{lem: 1-lip are exponential}, for each of these $f \in F$ there exists some $\lambda_f$ and some positive constant $c_f$ such that $\sum_{w\in G} f(w)\xi_{2k}(w) \sim c_f \cdot \lambda_f^{2k}$, unless there exists some $N$ such that for all $k > N$ we have that $\sum_{w\in G} f(w)\xi_{2k}(w) = 0$. If for all $f \in F$, there exists some $N$ such that for all $k > N$ we have $\sum_{w\in G} f(w)\xi_{2k}(w) = 0$, then we have that $\{W_{2k}\}$ is eventually constant at 0. Otherwise, let $\Tilde{F}$ be the set of functions $f$ for which $\lambda_f$ is well-defined. Then let $\lambda_{\even}$ be $\max_{f \in \Tilde{F}} \lambda_f$. Let $F' \subset \Tilde{F}$ be the set of $f$ such that $\lambda_f = \lambda_{\even}$, and let $c$ be $\max_{f \in F'} c_f$. Finally, let $\mathcal{F} \subset \Tilde{F}$ be the set of $f \in \Tilde{F}$ such that $\lambda_f = \lambda_{\even}$ and $c_f = c$. Then for all $f \in F$ such that $f \notin \mathcal{F}$, there exists some $N$ such that for all $k \geq N$ we have $$\sum_{w\in G} f(w)\xi_{2k}(w) < \max_{f \in \mathcal{F}} \sum_{w\in G} f(w)\xi_{2k}(w) \leq W_{2k}.$$ Thus, since $W_{2k}$ must be the output of some 1-Lipschitz function, there exists some $N$ such that for all $k \geq N$ we have $W_{2k} = \sum_{w\in G} f(w)\xi_{2k}(w)$ for some $f \in \mathcal{F}$, since it cannot be the output of any 1-Lipschitz function $f \notin \mathcal{F}$. However, for all $f \in \mathcal{F}$, we have that $\sum_{w\in G} f(w)\xi_{2k}(w) \sim c\cdot \lambda_{\even}^{2k}$. Thus for all $k \geq N$, we have that $W_{2k} \sim c \cdot \lambda_{\even}^{2k}$. Hence $W_{2k} \sim c \cdot \lambda_{\even}^{2k}$.
\end{proof}

\begin{remark} \label{rem: odd simba}
Analogously, for any Guvab $\mathcal{G}$ for which $W=0$ and $\{W_{2k+1}\}$ is not eventually constant, we have that there exists some $0 < \lambda_{\emph{\odd}} < 1$ and some $c > 0$, such that $W_{2k+1} \sim c\cdot \lambda_{\emph{\odd}}^{2k+1}$. 
\end{remark}

We now seek to explicitly characterize all the cases where $W = 0$ and $W_k$ is eventually constant. We start by understanding why we only need to consider the first few terms of $\{W_k\}$ to characterize all of these cases.

\begin{lemma} \label{lem: Mustard the Must-be-constant-after-1-step fairy} When $\displaystyle\lim_{k\to\infty}W_k = 0$, if there exists some $N \geq 0$ such that $\{W(\mu_k,\nu_k)\}_{k\geq N}$ is a constant sequence, then $\displaystyle\{W(\mu_k,\nu_k)\}_{k\geq 1}$ is also a constant sequence.\end{lemma}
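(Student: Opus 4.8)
The plan is to reduce eventual constancy at the value $0$ to the pointwise vanishing of $\xi_k$, and then exploit the explicit eigenvalue expansion from Lemma~\ref{lem: eileen the eigval sum fairy}. First I would observe that since $\{W(\mu_k,\nu_k)\}_{k\ge N}$ is a constant sequence and $\lim_{k\to\infty}W_k=0$, the constant value must equal the limit, so $W_k=0$ for every $k\ge N$. Because $W(\mu_k,\nu_k)=0$ exactly when $\mu_k=\nu_k$ (a cost-$0$ transportation plan can only move mass from a vertex to itself), this says $\xi_k=\mu_k-\nu_k=\Tilde{\textbf{0}}$ for all $k\ge N$.

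Next I would bring in the eigenvalue expansion. Fix any vertex $w$. By Lemma~\ref{lem: eileen the eigval sum fairy} there are distinct eigenvalues $\lambda_1,\dots,\lambda_n$ and constants $c^w_1,\dots,c^w_n$ with $\xi_k(w)=\sum_{i=1}^n c^w_i\lambda_i^k$ for all $k\ge 1$. The goal then becomes showing that this expression already vanishes for every $k\ge 1$, not merely for $k\ge N$. Since $w$ is arbitrary, establishing this at each vertex yields $\xi_k=\Tilde{\textbf{0}}$ for all $k\ge 1$, hence $\mu_k=\nu_k$ and $W(\mu_k,\nu_k)=0$ for all $k\ge 1$, so $\{W(\mu_k,\nu_k)\}_{k\ge 1}=(0,0,0,\dots)$ is constant, as desired.

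The key step is a linear-independence (Vandermonde) argument. We have $\xi_k(w)=0$ for all $k\ge N$, i.e.\ for infinitely many consecutive indices. Writing $n'$ for the number of nonzero eigenvalues among $\lambda_1,\dots,\lambda_n$ and setting $d_i=c^w_i\lambda_i^N$ for those nonzero $\lambda_i$, the equations coming from $k=N,N+1,\dots,N+n'-1$ read $\sum_{i:\lambda_i\ne 0} d_i\lambda_i^{\,j}=0$ for $j=0,\dots,n'-1$, whose coefficient matrix is a Vandermonde matrix on the distinct nonzero $\lambda_i$ and is therefore invertible. Hence every $d_i=0$, and since $\lambda_i^N\ne 0$ we obtain $c^w_i=0$ for each nonzero $\lambda_i$. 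The only remaining possibility is a single zero eigenvalue (the $\lambda_i$ are distinct), which contributes $c^w_i\lambda_i^k=0$ for every $k\ge 1$ regardless of $c^w_i$. Thus $\xi_k(w)=0$ for all $k\ge 1$.

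The only point requiring genuine care is this Vandermonde step, specifically accommodating a possible zero eigenvalue: one must not try to divide through by $\lambda_i^N$ when $\lambda_i=0$. Isolating the nonzero eigenvalues before inverting the Vandermonde system, as above, sidesteps this cleanly, since the zero-eigenvalue term drops out of $\xi_k(w)$ for every $k\ge 1$ anyway. Everything else—that a convergent constant sequence equals its limit, and that $W=0$ forces equality of the distributions—is routine bookkeeping.
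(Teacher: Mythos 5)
Your proposal is correct, and it follows the same overall reduction as the paper: both first observe that a constant sequence converging to $0$ must be identically $0$ from index $N$ on, hence $\mu_k=\nu_k$ and $\xi_k(w)=\sum_i c^w_i\lambda_i^k=0$ for all $k\ge N$ and all $w$, and both then argue that this forces $c^w_i\lambda_i^k=0$ termwise, so that $\xi_k=\Tilde{\textbf{0}}$ and $W_k=0$ for every $k\ge 1$. Where you genuinely diverge is in the crucial step showing the coefficients of the nonzero eigenvalues must vanish. The paper uses an asymptotic dominance argument: it takes the largest $|\lambda_i|$ with $c^w_i\ne 0$ and shows the corresponding term (or, in the tie case $\pm\lambda_m$, the pair of terms along one parity) eventually outweighs the rest, contradicting $\xi_k(w)=0$ for large $k$; this implicitly uses that the eigenvalues are real so that only $\lambda_m$ and $-\lambda_m$ can share the maximal absolute value, and it needs the vanishing to hold for arbitrarily large $k$. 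Your Vandermonde argument instead inverts the system coming from just $n'$ consecutive indices $k=N,\dots,N+n'-1$, which is cleaner, needs no case analysis on ties, and in fact proves the formally stronger statement that finitely many consecutive zeros already suffice. Your handling of a possible zero eigenvalue (isolating it before inverting, since $0^k=0$ for $k\ge1$) is exactly the care the step requires. One trivial bookkeeping point: Lemma~\ref{lem: eileen the eigval sum fairy} gives the expansion only for $k\ge 1$, so if $N=0$ you should start the Vandermonde system at $\max(N,1)$; this changes nothing.
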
 

\begin{proof} By Lemma \ref{lem: eileen the eigval sum fairy}, if we let the distinct eigenvalues of the transition matrices be $\lambda_1,\ldots, \lambda_n$, then for any vertex $w$ and for any $k \geq 1$ we can write $(\mu_k - \nu_k)_w = \sum_{i=1}^n c^w_i\lambda_i^k$ for some constants $c^w_1,\ldots, c^w_n$. Note that if $\{W(\mu_k,\nu_k)\}_{k\geq N}$ is a constant sequence, $0 = \lim_{k\to\infty}W_k = W(\mu_k,\nu_k)$ for all $k \geq N$. Thus for any vertex $w$, we will have $\sum_{i=1}^n c^w_i\lambda_i^k = 0$ for all $k \geq N$. 

Suppose that for some $i$, we have that $c^w_i$ and $\lambda_i$ are nonzero. Then let $\Lambda$ be the set of all $\lambda_i$ for which $c^w_i$ and $\lambda_i$ are nonzero. Then let $\lambda_m = \max_{\lambda \in \Lambda} |\lambda|$. If there is only one $\lambda_i \in \Lambda$ such that $|\lambda_i| = \lambda_m$, then for some $N$, for all $k > N$ we will have that $|c^w_i\lambda_i^k| > \sum_{j\neq i} |c^w_j\lambda_j^k|$ so the left-hand-side term will dominate and $\sum_{i=1}^n c^w_i\lambda_i^k$ will be nonzero. Then $0 \neq \lim_{k\to\infty}W_k$. If there is more than one $\lambda \in \Lambda$ such that $|\lambda| = \lambda_m$, then those two $\lambda$s will be $\lambda_m$ and $\lambda_{m'} = -\lambda_m$, since those are the only two numbers with absolute value $\lambda_m$. We know that $c^w_m\lambda_m^k$ will stay the same sign regardless of $k$, while $c^w_{m'}\lambda_{m'}^k$ will switch sign with parity. Thus, for one of the parities, $c^w_m\lambda_m^k$ and $c^w_{m'}\lambda_{m'}^k$ will have the same sign. Thus, for some $N$, either for all even $k > N$ or for all odd $k > N$, we will have that $|c^w_m\lambda_m^k + c^w_{m'}\lambda_{m'}^k| > \sum_{j\neq m,m'} |c^w_j\lambda_j^k|$, so the left-hand-side term will dominate and $\sum_{i=1}^n c^w_i\lambda_i^k$ will be nonzero. Then $0 \neq \lim_{k\to\infty}W_k$. Thus, we must have for all $1 \leq i \leq n$ that either $c^w_i$ or $\lambda_i$ is 0. 

This means that for all $1 \leq i \leq n$, either $c_i^w$ or $\lambda_i$ is 0. Thus, for all $k \geq 1$, we have that $c_i^w\lambda_i^k = 0$. Therefore $(\mu_k - \nu_k)_w = \sum_{i=1}^n c^w_i\lambda_i^k = 0$, so $\mu_k-\nu_k$ will be 0 at all vertices, so $W(\mu_k, \nu_k) = 0$ for all $k \geq 1$.  
\end{proof}

With this lemma established, we proceed to characterize all the cases when the Wasserstein distance is eventually constant in the case where $W = 0$. 

\begin{theorem} \label{thm: Constance the constant distance when W = 0 fairy}
When $\lim_{k\to\infty}W_k = 0$, we have that $W_k$ is eventually constant if and only if one of the following holds:
\begin{itemize}
    \item $\alpha = \beta = 0$ and $N(u)=N(v)$,
    \item $\displaystyle \alpha = \beta = \frac{1}{\deg u + 1}$, the edge $\{u,v\}\in E(G)$, and if the edge $\{u,v\}$ were removed from $E(G)$ then $u,v$ would have $N(u)=N(v)$,
    \item $\alpha = \beta$ and $u = v$.
\end{itemize}
\end{theorem}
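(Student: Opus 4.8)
The plan is to first invoke Lemma~\ref{lem: Mustard the Must-be-constant-after-1-step fairy} to collapse ``eventually constant'' into ``constant from the start.'' Since $\lim_{k\to\infty} W_k = 0$, that lemma says $\{W_k\}$ is eventually constant if and only if $W_k = 0$ for all $k \geq 1$, which, because $W$ is a metric, is equivalent to $\mu_k = \nu_k$ for all $k \geq 1$. Writing $\mu_k = \mathbbm{1}_u P_\alpha^k$ and $\nu_k = \mathbbm{1}_v P_\beta^k$, the problem becomes: characterize exactly when $\mathbbm{1}_u P_\alpha^k = \mathbbm{1}_v P_\beta^k$ for every $k \geq 1$. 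I would organize both directions around the single pivotal claim that this identity forces $\alpha = \beta$, after which everything reduces to the one-step equality $\mu_1 = \nu_1$.

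For the forward direction, the first and hardest step is showing $\alpha = \beta$. Assuming $\mu_k = \nu_k$ for all $k$, I would combine the $k=1$ and $k=2$ identities: from $\mu_1 = \nu_1$ and $\mu_2 = \nu_2$ we get $\mu_1(P_\alpha - P_\beta) = 0$, and since $P_\alpha - P_\beta = (\alpha - \beta)(I - P)$, if $\alpha \neq \beta$ then $\mu_1(I - P) = 0$, i.e.\ $\mu_1$ is a stationary distribution of the zero-laziness walk $P$. As $P$ is irreducible ($G$ connected), its unique stationary distribution is $\pi$, so $\mu_1 = \nu_1 = \pi$. But $\mu_1$ is supported on $\{u\} \cup N(u)$ while $\pi$ has full support, forcing $u$ (and likewise $v$) to be adjacent to every other vertex; two such dominating vertices both have degree $|V(G)| - 1$, so $\alpha = \pi(u) = \pi(v) = \beta$, a contradiction. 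Thus $\alpha = \beta$.

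With $\alpha = \beta$ in hand, I would analyze $\mu_1 = \nu_1$ by comparing values vertex-by-vertex in three mutually exhaustive subcases. If $u = v$ the identity is automatic, yielding the third bullet. If $u \neq v$ and $u \not\sim v$, then $\mu_1(u) = \alpha$ must equal $\nu_1(u) = 0$, forcing $\alpha = \beta = 0$, after which $\mu_1 = \nu_1$ reduces to $N(u) = N(v)$ (with equal degrees following automatically), giving the first bullet. If $u \neq v$ and $u \sim v$, then matching $\mu_1(u) = \alpha$ with $\nu_1(u) = \tfrac{1-\beta}{\deg v}$ and symmetrically at $v$ forces $\deg u = \deg v$ and $\alpha = \beta = \tfrac{1}{\deg u + 1}$, while matching the remaining coordinates forces $N(u)\setminus\{v\} = N(v)\setminus\{u\}$, i.e.\ $N(u) = N(v)$ after deleting the edge $\{u,v\}$; this is the second bullet.

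For the converse, each of the three conditions directly yields $\alpha = \beta$ together with $\mu_1 = \nu_1$ via a short vertex-by-vertex verification identical to the computations above. Since $\alpha = \beta$ gives $P_\alpha = P_\beta$, induction then propagates the identity: $\mu_k = \mu_1 P_\alpha^{k-1} = \nu_1 P_\beta^{k-1} = \nu_k$ for all $k \geq 1$, so $W_k = 0$ throughout and is certainly eventually constant. The main obstacle is the forward $\alpha = \beta$ step, and within it the delicate point of ruling out the degenerate possibility $\mu_1 = \nu_1 = \pi$, which I dispatch using the full-support property of $\pi$ and the resulting degree equality of the two dominating vertices.
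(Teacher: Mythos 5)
Your proposal is correct and follows essentially the same route as the paper: invoke Lemma~\ref{lem: Mustard the Must-be-constant-after-1-step fairy} to reduce the problem to $\mu_k=\nu_k$ for all $k\geq 1$, force $\alpha=\beta$ by showing that otherwise $\mu_1$ would have to equal the stationary distribution $\pi$, and then extract the three bullets from the single equation $\mu_1=\nu_1$ by casework (you organize the cases by whether $u=v$, $u\sim v$, or neither, while the paper organizes them by the value of $\alpha$, but the content is the same). Your refutation of $\mu_1=\pi$ --- via the support of $\mu_1$ forcing $\deg u=\deg v=|V(G)|-1$ and hence $\alpha=\pi(u)=\pi(v)=\beta$ --- is actually a cleaner argument than the paper's, which at that point conflates $\mathbbm{1}_uP_\alpha$ with $\mathbbm{1}_uP$.
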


\begin{proof}
We know by Lemma \ref{lem: Mustard the Must-be-constant-after-1-step fairy} that if $\lim_{k\to\infty}W_k = 0$ and $W_k$ is eventually always 0, then $\mu_1 = \nu_1$ and $\mu_2 = \nu_2$. Let $\mu_1 = \nu_1$ be $\phi$. Recall that $P_\alpha$ is the transition matrix for $X$ and $P_\beta$ is the transition matrix for $Y$. Further recall that $P_\alpha = \alpha I + (1-\alpha) P$ and $P_\beta = \beta I + (1-\beta) P$. Then we have $\phi P_\alpha = \phi P_\beta$, so $\phi(\alpha I + (1-\alpha) P) = \phi(\beta I + (1-\beta) P)$. Then $\phi((\beta - \alpha)I + (\alpha - \beta)P) = 0$. 

If $\alpha \neq \beta$ then dividing out by $(\alpha-\beta)$, we get $\phi P = \phi$. If such a $\phi$ exists, it must be the stationary distribution $\pi$. Then $\phi = \mu_0 P = \mathbbm{1}_u P = \pi$. However, $(\mathbbm{1}_u P)_u = P_{u,u} = 0$ by definition of $P$, and $\pi_u = \frac{\deg(u)}{2|E(G)|} > 0$. Thus, we cannot have $\mathbbm{1}_u P = \pi$, so we cannot have $\alpha \neq \beta$.

This means we have that $\alpha = \beta$. 

We also know that, given that $\lim_{k\to\infty}W_k = 0$, if $\mu_1 = \nu_1$ and $\alpha = \beta$, then $\mu_k = \mu_1(P_\alpha)^{k-1} = \nu_1(P_\alpha)^{k-1} = \nu_k$ for all $k \geq 1$ so $W_k$ is eventually always 0. 

It therefore suffices to characterize the cases where $\lim_{k\to\infty}W_k = 0$ and $\mu_1 = \nu_1$ and $\alpha = \beta$. We first note that if $u = v$ and $\alpha = \beta$, we are done. Otherwise, we assume that $\alpha = \beta$ and casework on the values of $\alpha$ to determine which cases yield $\lim_{k\to\infty}W_k = 0$ and $\mu_1 = \nu_1$. 

If $\alpha = 0$, we need that $u$ and $v$ have the same neighbor set, since if $u$ had some neighbor $n$ that was not adjacent to $v$ then $\mu_1$ would have nonzero mass at $n$ and $\nu_1$ would not. We will also show that this is a sufficient condition. If $u$ and $v$ have the same neighbor set then $\deg(u) = \deg(v)$. For each neighbor $n$ of $u$ and $v$, we have that $\mu_1(n) = \frac{1-\alpha}{\deg(u)} = \frac{1-\beta}{\deg(v)} = \nu_1(n)$ and for all other vertices $w$, we have that $\mu_1(w)= \nu_1(w) = 0$. Thus $\mu_1 = \nu_1$. We also know that $\lim_{k\to\infty}W_k = 0$ by Theorem \ref{thm: 0-convergence} since $\alpha = \beta = 0$ and for any neighbor $n$ of $u$, the path $u \to n \to v$ has an even number of steps. 

If $0 < \alpha < 1$, we first note that we need $u$ and $v$ to be adjacent, since $\mu_1(u) = \alpha > 0$ and $\nu_1(u) = 0$ if $u$ and $v$ are not adjacent. When $u$ and $v$ are adjacent we have that $\nu_1(u) = \frac{1-\alpha}{\deg(u)}$, so since $\nu_1(u) = \mu_1(u)$ we have $\frac{1-\alpha}{\deg(u)} = \alpha$, which yields $\alpha = \frac{1}{\deg u + 1}$. We also note that, similarly to before, aside from the edge $\{u,v\}$ we have that $u$ and $v$ need to have the same set of neighbors because if there was some vertex $n\neq u, v$ such that $n\sim u$ and $n\not \sim v$, then $\mu_1$ would have nonzero mass at $n$ and $\nu_1$ would not. We will finish by showing that if $\alpha,\beta,u,v$ satisfy these conditions, then $\mu_1 = \nu_1$ and $\lim_{k\to\infty}W_k = 0$. 

Suppose that the conditions are satisfied. We know that $\deg(u) = \deg(v)$, so $\mu_1(u) = \alpha = \frac{1-\alpha}{\deg(u)} = \nu_1(u)$ and similarly $\mu_1(v) = \nu_1(v)$. We also know that for all $n\neq u,v$ such that $n\sim u$ and $n\sim v$, we have that $\mu_1(n) = \frac{1-\alpha}{\deg(u)} = \frac{1-\beta}{\deg(v)} = \nu_1(n)$ and for all other vertices $w$, we have that $\mu_1(w)= \nu_1(w) = 0$. Thus $\mu_1 = \nu_1$. Also, $\lim_{k\to\infty}W_k = 0$ by Theorem \ref{thm: 0-convergence} since $0 < \alpha \leq \beta < 1$.
\end{proof}

\section{Convergence when $\beta = 1$}
We next consider the case of Guvabs where $\beta = 1$. Similarly to the $W = 0$ case, we show that the rate of convergence is exponential unless the distance is eventually constant. Furthermore, when the Wasserstein distance is eventually constant, it is constant after exactly 1 step. 

We first show that the rate of convergence is exponential unless the distance is eventually constant.
\begin{lemma} \label{lem: betty the beta=1 theorem}
Consider a Guvab where $\beta = 1$. Either $\{W_{2k}\}$ is eventually constant, or for some $c_e$ and some $\lambda_e$, we have that $|W_{2k} - \lim_{k\to\infty} W_{2k}| \sim c_e\cdot \lambda_e^{2k}$. Also, either $\{W_{2k+1}\}$ is eventually constant, or for some $c_o$ and some $\lambda_o$, we have that $|W_{2k+1} - \lim_{k\to\infty} W_{2k+1}| \sim c_o\cdot \lambda_o^{2k+1}$.
\end{lemma}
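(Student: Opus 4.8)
The plan is to mirror the argument for the $W=0$ case (Theorem~\ref{thm: Simba the sim-an-exponential when W=0 fairy}), with the one new feature that the limit of $W_{2k}$ is now a nonzero constant. First I would fix the finite set $F = \{f_1,\dots,f_m\}$ of $1$-Lipschitz ``corner'' functions produced by Lemma~\ref{lem: finite function set}, so that by Kantorovich duality $W_{2k} = \max_{f\in F}\sum_{w\in G} f(w)\xi_{2k}(w)$ for every $k$. Since $\beta = 1$ forces $\nu_k = \mathbbm{1}_v$ for all $k$, the only moving part is $\mu_{2k}$, and $\xi^0 = \lim_{k\to\infty}\xi_{2k}$ exists by Corollary~\ref{corolawrence: xi0,xi1 are well defined}. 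For each $f\in F$ I would split
\[
\sum_{w\in G} f(w)\xi_{2k}(w) = a_f + e_f(k), \qquad a_f := \sum_{w\in G} f(w)\xi^0(w), \quad e_f(k) := \sum_{w\in G} f(w)\bigl(\xi_{2k}(w)-\xi^0(w)\bigr),
\]
so that $a_f$ is a constant and $e_f(k)\to 0$.

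Next I would reduce the maximum to the corners that attain the limit. Writing $L := \lim_{k\to\infty} W_{2k}$, the fact that the limit of the distance is the distance of the limit (Lemma~\ref{lem: converges to stationary distance}) together with Kantorovich duality for $\xi^0$ gives $L = W(\xi^0,\Tilde{\textbf{0}}) = \max_{f\in F} a_f$. Let $F_L := \{f\in F : a_f = L\}$, which is nonempty. For every $f\notin F_L$ we have $a_f \le L-\delta$ for a fixed $\delta>0$, while $e_f(k)\to 0$, so such corners satisfy $a_f + e_f(k) < L$ for all large $k$ and therefore cannot achieve the maximum once $\max_{f\in F_L} e_f(k)$ (which tends to $0$) exceeds $-\delta/2$. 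Hence there is some $N_0$ with $W_{2k} - L = \max_{f\in F_L} e_f(k)$ for all $k\ge N_0$.

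The heart of the argument is to understand $\max_{f\in F_L} e_f(k)$. By Lemma~\ref{lem: eileen the eigval sum fairy} and the grouping used in Lemma~\ref{lem: 1-lip are exponential} (writing $\lambda_i^{2k} = (\lambda_i^2)^k$ and collecting the terms with $|\lambda_i|=1$ into $\xi^0$), each $e_f(k)$ is a finite sum $\sum_j b_{f,j}\, r_j^{\,k}$ with distinct bases $r_j = \lambda_i^2 \in (0,1)$. Any such sum, if not identically zero for large $k$, is eventually sign-definite and asymptotic to its leading term; applying this to the pairwise differences $e_f - e_{f'}$ (again sums of decaying positive-base exponentials) shows that for all large $k$ the comparisons among $\{e_f : f\in F_L\}$ never change, so a single corner $f^\ast\in F_L$ attains the maximum for all $k\ge N_1$. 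Consequently $W_{2k}-L = e_{f^\ast}(k)$ eventually. If $e_{f^\ast}$ is eventually $0$ then $\{W_{2k}\}$ is eventually constant; otherwise $e_{f^\ast}(k)\sim b\,\lambda_e^{2k}$, where $\lambda_e\in(0,1)$ is the largest base with nonzero coefficient in $e_{f^\ast}$ and $b$ its coefficient, giving $|W_{2k}-L|\sim c_e\lambda_e^{2k}$ with $c_e = |b|$. The odd case is identical after replacing $\lambda_i^{2k}$ by $\lambda_i^{2k+1} = \lambda_i(\lambda_i^2)^k$, which only rescales the leading coefficients and yields the stated $\lambda_o^{2k+1}$ form.

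The main obstacle I expect is the step that the pointwise maximum of several exponential sums is eventually governed by one of them: the coefficients $b_{f,j}$ can be negative, so I cannot simply pick the largest base and largest leading coefficient as in the $W=0$ proof. Instead I must argue via the eventual sign-definiteness of the differences $e_f - e_{f'}$ to obtain a well-defined eventual \emph{argmax} among the finitely many corners of $F_L$. Verifying this, together with confirming that the corners with $a_f<L$ genuinely drop out of the maximum, is where the care lies; the remaining asymptotics are routine.
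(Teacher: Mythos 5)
Your proof is correct, but it takes a genuinely different and much heavier route than the paper. The paper's proof rests on one observation that you did not exploit: when $\beta=1$ the second walk is frozen, so $\nu_k=\mathbbm{1}_v$ for all $k$ and the optimal transport plan is forced --- every unit of mass of $\mu_k$ must travel all the way to $v$, giving the exact closed form $W_k=\sum_{w\in G}\mu_k(w)\,\dist(w,v)$ with no maximization over Lipschitz functions at all. By Lemma~\ref{lem: eileen the eigval sum fairy} this is a single exponential sum $\sum_i c_i\lambda_i^k$, and the asymptotic analysis of Lemma~\ref{lem: 1-lip are exponential}, applied separately to the even and odd subsequences, finishes the proof in two lines. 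Your argument instead runs the full Kantorovich-duality machinery: the finite corner set $F$ from Lemma~\ref{lem: finite function set}, the reduction to the corners attaining $L=\max_f a_f$, and the stabilization of the argmax via eventual sign-definiteness of the differences $e_f-e_{f'}$. All of these steps are sound (the bases $\lambda_i^2$ are nonnegative since the eigenvalues are real, the $|\lambda_i|=1$ terms do collect into $\xi^0$, and finitely many eventually sign-definite pairwise comparisons do yield an eventual argmax), and your careful treatment of negative coefficients is actually more scrupulous than the corresponding step in the paper's proof of Theorem~\ref{thm: Simba the sim-an-exponential when W=0 fairy}; what your approach buys is generality, since it never uses that $\nu_k$ is a point mass. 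What it costs is simplicity: for this particular lemma the special structure of $\beta=1$ makes the entire corner-polytope apparatus unnecessary.
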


\begin{proof}
When $\beta = 1$, we know that $W_k = \sum_{w \in G} \mu_k(w)\dist(w,v) = \sum c_i \cdot \lambda_i^k$ for some constants $c_i$ and $\lambda_i$. Using the same reasoning as in the proof of Lemma \ref{lem: 1-lip are exponential}, we know that (unless $\sum c_i \cdot \lambda_i^{2k}$ is eventually constant) $\sum c_i \cdot \lambda_i^{2k} \sim c_e\cdot \lambda_e^{2k}$ for some $c_e, \lambda_e$. We also know that (unless $\sum c_i \cdot \lambda_i^{2k+1}$ is eventually constant) $\sum c_i \cdot \lambda_i^{2k+1} \sim c_o\cdot \lambda_o^{2k+1}$ for some $c_o, \lambda_o$. Thus, we attain the desired result.
\end{proof}

We now show that if the distance is eventually constant, it is constant after 1 step.
\begin{lemma} \label{lem: when beta is 1 we must be constant after 1 step} 
When $\beta = 1$, if there exists some $N \geq 0$ such that $\{W(\mu_n,\nu_n)\}_{n\geq N}$ is a constant sequence, then $\{W(\mu_n,\nu_n)\}_{n\geq 1}$ is also a constant sequence.\end{lemma}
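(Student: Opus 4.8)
The plan is to exploit the fact that when $\beta = 1$ the walk $Y$ never moves, so $\nu_k = \mathbbm{1}_v$ for every $k$ and the Wasserstein distance collapses to a single linear functional of $\mu_k$. Concretely, transporting $\mu_k$ to the point mass $\mathbbm{1}_v$ costs exactly $\sum_{w\in G}\mu_k(w)\dist(w,v)$ (send every unit of mass to $v$ along a geodesic; optimality follows from Kantorovich duality applied to the $1$-Lipschitz function $w\mapsto\dist(w,v)$). Diagonalizing $P_\alpha$ exactly as in Lemma~\ref{lem: eileen the eigval sum fairy} then lets me write, for all $k\geq 1$,
$$W_k = \sum_{w\in G}\mu_k(w)\dist(w,v) = \sum_{i=1}^n c_i\lambda_i^k,$$
where the $\lambda_i$ are the distinct real eigenvalues of $P_\alpha$ and the $c_i$ are fixed constants. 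This is precisely the representation already invoked in the proof of Lemma~\ref{lem: betty the beta=1 theorem}.

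Next I would pass to first differences in order to kill the constant. Set $D_k := W_k - W_{k+1} = \sum_{i=1}^n c_i(1-\lambda_i)\lambda_i^k = \sum_{i=1}^n d_i\lambda_i^k$, where $d_i := c_i(1-\lambda_i)$; this is again a finite sum of exponentials in the \emph{same} distinct bases $\lambda_i$. The hypothesis that $\{W_n\}_{n\geq N}$ is constant is exactly the statement that $D_k = 0$ for all $k\geq N$, and the desired conclusion that $\{W_n\}_{n\geq 1}$ is constant is equivalent to showing $D_k = 0$ for all $k\geq 1$.

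The core step is to upgrade ``$D_k=0$ for all large $k$'' to ``$D_k=0$ for all $k\geq 1$'', and here I would reuse the dominant-eigenvalue argument from the proof of Lemma~\ref{lem: Mustard the Must-be-constant-after-1-step fairy}. Suppose some $d_i\lambda_i\neq 0$; let $\Lambda$ be the set of bases $\lambda_i$ with $d_i\neq 0$ and $\lambda_i\neq 0$, and let $\lambda_m = \max_{\lambda\in\Lambda}|\lambda|$. If a unique base attains this maximum modulus, its term eventually dominates all others in absolute value, forcing $D_k\neq 0$ for large $k$, a contradiction; if two bases attain it they must be $\lambda_m$ and $-\lambda_m$, and for one choice of parity their contributions share a sign and together dominate, again contradicting $D_k=0$ for all $k\geq N$. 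Hence $d_i\lambda_i = 0$ for every $i$. Since $d_i\lambda_i=0$ makes $d_i\lambda_i^k=0$ for every $k\geq 1$ (trivially when $\lambda_i=0$, and because $d_i=0$ otherwise), I conclude $D_k=0$ for all $k\geq 1$, so $\{W_n\}_{n\geq 1}$ is constant.

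I expect the main obstacle to be purely the bookkeeping in that last step: the only genuinely delicate case is the pair of equal-modulus bases $\pm\lambda_m$ (which arises precisely from a $-1$ eigenvalue when $\alpha=0$ and $G$ is bipartite), handled by the same parity subtlety already treated in Lemma~\ref{lem: Mustard the Must-be-constant-after-1-step fairy}. Everything else is a direct translation of that argument, so no new ideas should be needed; the one thing worth double-checking is that the exponential representation of $W_k$ is valid all the way down to $k=1$, which it is, since the diagonalization giving $\mu_k(w)=\sum_i x_i^w\lambda_i^k$ holds for every $k\geq 1$.
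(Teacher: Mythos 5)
Your proposal is correct and follows essentially the same route as the paper: write $W_k=\sum_{w\in G}\mu_k(w)\dist(w,v)=\sum_i c_i\lambda_i^k$ and transplant the dominant-eigenvalue/parity argument from the proof of Lemma~\ref{lem: Mustard the Must-be-constant-after-1-step fairy}. Your passage to first differences $D_k=W_k-W_{k+1}$ is simply a clean way of making precise the paper's terser claim that all $c_i$ with $\lambda_i\neq 0,1$ must vanish, so that only the constant ($\lambda_i=1$) term survives for $k\geq 1$.
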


\begin{proof}
When $\beta = 1$, we have that $W_k$ is $\sum_{w \in G} \mu_k(w)\dist(w,v) = \sum c_i \cdot \lambda_i^k$ for some constants $c_i$ and $\lambda_i$. Thus, for similar reasons as in the proof of Lemma \ref{lem: Mustard the Must-be-constant-after-1-step fairy}, all the $c_i$ for $\lambda_i \neq 0,1$ are 0 so $\{W(\mu_n,\nu_n)\}_{n\geq 1}$ is constant.
\end{proof}

When $W = 0$, using a lemma similar to Lemma \ref{lem: when beta is 1 we must be constant after 1 step} we were able to explicitly characterize exactly when $W_k$ was eventually constant. Lemma \ref{lem: when beta is 1 we must be constant after 1 step} provides an important step towards making a similar characterization when $\beta=1$. To exemplify how a characterization could be made when $\beta=1$, we provide a family of examples of Guvabs where $W_k$ is eventually constant. 

\begin{definition}
We define a \textbf{Gluvab} $\mathcal{J}$ to be a Guvab that satisfies all of the following conditions:
\begin{itemize}
    \item $\beta = 1$,
    \item $2\dist(u,v) = \max_{w\in G} \dist(w,v)$,
    \item if $\dist(x,v) = \max_{w\in G} \dist(w,v)$, then for all $n \sim x$ we have that $\dist(n,v) < \dist(x,v)$,
    \item if $0 < \dist(x,v) < \max_{w\in G} \dist(w,v)$, then for exactly half of the neighbors $n\sim x$ we have that $\dist(n,v) < \dist(x,v)$, and for exactly the other half we have that $\dist(n,v) > \dist(x,v)$.
\end{itemize}
\end{definition}
\begin{example}
Consider a Guvab with $G=P_3$ (where $P_3$ is the path graph with $3$ vertices), $v$ is the vertex of $P_3$ with degree 2, $u$ is either of the other two vertices, $\alpha=\frac{1}{3}$, and $\beta=1$. One can check that this Guvab is a Gluvab.
\end{example}
\begin{lemma}\label{lem: Garry the Gluvab Fairy}
Any Gluvab $\mathcal{J}$ satisfies $W_0 = W_1 = \cdots$.
\end{lemma}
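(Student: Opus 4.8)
The plan is to reduce the computation of $W_k$ to a one-dimensional lazy random walk on a path and then exploit a reflection symmetry. Since $\beta = 1$, the walk $Y$ never leaves $v$, so $\nu_k = \mathbbm{1}_v$ for every $k$, and (as already used for $\beta = 1$ in the proof of Lemma~\ref{lem: betty the beta=1 theorem}) the Wasserstein distance collapses to an expected distance: $W_k = \sum_{w \in G} \mu_k(w)\dist(w,v) = \mathbb{E}[\dist(X_k, v)]$. It therefore suffices to show that this expected distance does not depend on $k$.

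The central step is to observe that the layer process $Z_k := \dist(X_k, v)$ is itself a Markov chain on the state space $\{0, 1, \ldots, d\}$, where $d := \max_{w \in G}\dist(w,v)$, and this is exactly where the defining conditions of a Gluvab enter. For any vertex $x$ with $0 < \dist(x,v) < d$, the fourth condition says that no neighbor of $x$ lies in the same layer and that exactly half of the $\deg(x)$ neighbors lie one layer closer to $v$ while the other half lie one layer farther; consequently, conditioned on $X_k = x$, the walk moves to layer $\dist(x,v)-1$ with probability $(1-\alpha)/2$, to layer $\dist(x,v)+1$ with probability $(1-\alpha)/2$, and stays with probability $\alpha$, and these numbers are independent of which vertex of the layer we occupy. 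The two boundary layers behave like the endpoints of a path: from $v$ (layer $0$) the walk moves to layer $1$ with probability $1-\alpha$, and from a vertex of maximal distance (layer $d$, governed by the third condition) it moves to layer $d-1$ with probability $1-\alpha$. Hence $Z_k$ is precisely a lazy random walk with laziness $\alpha$ on the path $\{0,1,\ldots,d\}$, and $W_k = \mathbb{E}[Z_k]$.

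I would finish with a symmetry argument. The reflection $\phi(i) = d - i$ is an automorphism of the path $\{0,\ldots,d\}$ that commutes with the transition kernel of $Z$, since it preserves the laziness and interchanges the two endpoints together with their identical boundary behaviour. The second Gluvab condition, $2\dist(u,v) = d$, places the starting state $Z_0 = \dist(u,v)$ at the fixed point $d/2$ of $\phi$; in particular $d$ is even and $d/2$ is a genuine integer state. Because the initial distribution $\delta_{d/2}$ is $\phi$-invariant and the kernel commutes with $\phi$, the law of $Z_k$ is $\phi$-invariant for every $k$, i.e. $\Pr[Z_k = i] = \Pr[Z_k = d-i]$. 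Taking expectations gives $\mathbb{E}[Z_k] = d - \mathbb{E}[Z_k]$, so $\mathbb{E}[Z_k] = d/2 = \dist(u,v)$ for all $k$, whence $W_0 = W_1 = \cdots = \dist(u,v)$.

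The main obstacle is the lumping step: verifying rigorously that projecting $X_k$ onto its distance layer yields a genuine Markov chain. This is where all three nontrivial Gluvab conditions are used, and the crucial point is that the conditional layer-transition probabilities reduce to the vertex-independent value $(1-\alpha)/2$, which is precisely what the ``exactly half closer, exactly half farther'' hypothesis guarantees; once the layer chain is in hand, the remainder is a short symmetry computation.
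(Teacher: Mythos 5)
Your proposal is correct and follows essentially the same route as the paper: both arguments lump the walk by distance layers to obtain a lazy random walk on a path of length $2\dist(u,v)$ started at the midpoint, and then conclude by symmetry that the expected distance to $v$ never changes. Your reflection-automorphism formulation makes the final symmetry step slightly more explicit than the paper's, but the substance is identical.
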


\begin{proof}
We aim to prove this lemma by essentially reducing each Gluvab to a random walk on a path graph. In particular, each vertex $m_i$ in the path corresponds to the set of vertices $\{w\in G : d(w,v)= i\}$ at a given distance $i$ from $v$. After this, the desired result follows without much difficulty.

Construct the Markov chain $M$ that is simply a random walk with laziness $\alpha$ on a path of length $2\dist(u,v)$ with vertices $m_0, m_1, \ldots, m_{\dist(u,v)}, \ldots, m_{2\dist(u,v)}$. We let the starting point of this Markov chain be $m_{\dist(u,v)}$. It suffices to show that for all $i$, we have that $\displaystyle \sum_{w \in G, \, \dist(w,v) = i} \mu_k(w) = M_k(m_i)$, because that would mean that the distribution is always symmetric about $u$ so $\mu_k$ always has the same average distance $\dist(u,v)$.

We will show by induction on $k$ that for all $i$, $$ \sum_{w \in G, \, \dist(w,v) = i} \mu_k(w) = M_k(m_i).$$

Base case: At $k = 0$, we have that $\mu_k$ is only nonzero at $u$ and that $M_k$ is only nonzero at $m_{\dist(u,v)}$, so the claim holds.

Inductive step: We suppose that this claim holds for $k$. We will show that it holds for $k+1$. We know the following facts about $M$:
\begin{itemize}
    \item $M_{k+1}(m_0) = \frac{1-\alpha}{2}M_k(m_1) + \alpha M_k(m_0)$,
    \item $M_{k+1}(m_{2\dist(u,v)}) = \frac{1-\alpha}{2}M_k(m_{2\dist(u,v) - 1}) + \alpha M_k(m_{2\dist(u,v)})$,
    \item $M_{k+1}(m_1) = \frac{1-\alpha}{2}M_k(m_2) + \alpha M_k(m_1) + (1-\alpha) M_k(m_0)$,
    \item $M_{k+1}(m_{2\dist(u,v) - 1}) = \frac{1-\alpha}{2}M_k(m_{2\dist(u,v) - 2}) + \alpha M_k(m_{2\dist(u,v) - 1}) + (1-\alpha) M_k(m_{2\dist(u,v)})$,
    \item for $1 < i < 2\dist(u,v)-1$, we have that $M_{k+1}(m_i) = \alpha M_k(m_i) + \frac{1-\alpha}{2}(M_k(m_{i-1}) + M_k(m_{i+1})).$
\end{itemize}
We now examine $\mu_{k+1}$, and in particular the amount of mass of $\mu_{k+1}$ at each level. We let $S_k(i)$ denote the mass of $\mu_k$ at the $i$th level; in other words, $$S_k(i) = \displaystyle \sum_{w \in G, \, d(w,v)=i} \mu_k(w).$$ For all $i$, we can calculate $S_{k+1}(i)$ by considering the $i$th level and considering how much mass from each level from $S_k$ goes to the $i$th level. This is possible because all vertices at the same level will have indistinguishable behavior with respect to their contribution to the $i$th level. By calculating the contribution of each different level to the $i$th level, we can check that 
\begin{itemize}
    \item $S_{k+1}(0) = \frac{1-\alpha}{2}S_k(1) + \alpha S_k(0)$,
    \item $S_{k+1}(2\dist(u,v)) = \frac{1-\alpha}{2}S_k(2\dist(u,v) - 1) + \alpha S_k(2\dist(u,v))$,
    \item $S_{k+1}(1) = \frac{1-\alpha}{2}S_k(2) + \alpha S_k(1) + (1-\alpha) S_k(0)$,
    \item $S_{k+1}(2\dist(u,v) - 1) = \frac{1-\alpha}{2}S_k(2\dist(u,v) - 2) + \alpha S_k(2\dist(u,v) - 1) + (1-\alpha) S_k(2\dist(u,v))$,
    \item for $1 < i < 2\dist(u,v)-1$, we have that $S_{k+1}(i) = \alpha S_k(i) + \frac{1-\alpha}{2}(S_k(i-1) + S_k(i+1)).$
\end{itemize}
This lines up exactly with our characterization of $M_{k+1}$, so for all $i$ we have $$\displaystyle \sum_{w \in G, \dist(w,v) = i} \mu_{k+1}(w) = M_{k+1}(m_i).$$
\end{proof}

\section{Main Convergence Theorems}
Since we have shown that all Guvabs have $W = 1$ or $W = \frac{1}{2}$ or $W = 0$ or $\beta = 1$, and we have some understanding of the rate of convergence of the Wasserstein distance in each of these cases, we make some general statements about convergence that apply to all Guvabs. The following theorems sum up the general convergence results obtained from considering the each of the cases $W = 1$, $W = \frac{1}{2}$, $W = 0$ and $\beta = 1$ in the previous sections. 

The first theorem states that the rate of convergence of $\{W_{2k}\}$ and $\{W_{2k+1}\}$ is exponential unless it is eventually constant.
\begin{theorem} \label{thm: Guvab Convergence Theorem}
For any Guvab, we have that
\begin{itemize}
    \item either $\{W_{2k}\}$ is eventually constant, or there exists a constant $\lambda_{\emph{\even}} \in (-1,1)$ and a positive constant $c_{\emph{\even}} > 0$ such that $|W_{2k} - \lim_{k\to\infty}W_{2k}| \sim c_{\emph{\even}} \cdot |\lambda_{\emph{\even}}|^{2k}$,
    \item either $\{W_{2k+1}\}$ is eventually constant, or there exists a constant $\lambda_{\emph{\odd}} \in (-1,1)$ and a positive constant $c_{\emph{\odd}} > 0$ such that $|W_{2k+1} - \lim_{k\to\infty}W_{2k+1}| \sim c_{\emph{\odd}} \cdot |\lambda_{\emph{\odd}}|^{2k+1}$
\end{itemize}
\end{theorem}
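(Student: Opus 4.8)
The plan is to prove this as a synthesis theorem rather than from scratch: by Corollary~\ref{cor: four guvab nations}, every Guvab falls into exactly one of the four mutually exclusive categories $W=1$ with $\beta<1$, $W=\tfrac12$ with $\beta<1$, $W=0$ with $\beta<1$, or $\beta=1$, and Sections~4--7 have already pinned down the behavior of $\{W_{2k}\}$ and $\{W_{2k+1}\}$ in each. So the proof is a case analysis that, in each category, invokes the relevant earlier result and reconciles its phrasing with the uniform dichotomy asked for here. Throughout I would use that $\lim_{k\to\infty}W_{2k}$ and $\lim_{k\to\infty}W_{2k+1}$ are known to exist, so ``eventually constant or exponential decay to the limit'' is a meaningful dichotomy in every case.

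First I would dispatch the two categories where a subsequence can be eventually constant outright. When $W=1$ and $\beta<1$, Theorem~\ref{thm:winnie the when it's constant for W = 1 fairy} gives $\rho(\mathcal{G})<\infty$, so $\{W_k\}$ is eventually constant and hence so are both $\{W_{2k}\}$ and $\{W_{2k+1}\}$, landing in the first alternative of each bullet. When $W=\tfrac12$ and $\beta<1$, Theorem~\ref{thm: washington the w=1/2 convergence theorem} gives $|W_k-\tfrac12|=0.5\,|1-2\beta|^k$ for all large $k$; if $\beta=\tfrac12$ this is eventually $0$ so both subsequences are eventually constant, and otherwise $|1-2\beta|\in(0,1)$ yields exactly $|W_{2k}-\tfrac12|=0.5\,|1-2\beta|^{2k}$ and $|W_{2k+1}-\tfrac12|=0.5\,|1-2\beta|^{2k+1}$, from which I read off $\lambda_{\even}=\lambda_{\odd}=1-2\beta\in(-1,1)$ and $c_{\even}=c_{\odd}=0.5>0$, with $\lim_{k\to\infty}W_{2k}=\lim_{k\to\infty}W_{2k+1}=\tfrac12$.

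Next I would handle the two genuinely exponential categories. When $W=0$ and $\beta<1$, Theorem~\ref{thm: Simba the sim-an-exponential when W=0 fairy} states that unless $\{W_{2k}\}$ is eventually constant there exist $0<\lambda_{\even}<1$ and $c>0$ with $W_{2k}\sim c\,\lambda_{\even}^{2k}$; since here $\lim_{k\to\infty}W_{2k}=0$, this is precisely $|W_{2k}-\lim_{k\to\infty}W_{2k}|\sim c\,|\lambda_{\even}|^{2k}$, and Remark~\ref{rem: odd simba} supplies the identical conclusion for the odd subsequence. When $\beta=1$, Lemma~\ref{lem: betty the beta=1 theorem} directly gives the dichotomy, producing $|W_{2k}-\lim_{k\to\infty}W_{2k}|\sim c_e\,\lambda_e^{2k}$ and the odd analogue whenever the respective subsequence is not eventually constant. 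Since these four categories exhaust all Guvabs, assembling them proves the theorem.

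There is no deep obstacle here—the hard analysis was done in the earlier sections—so the only real care is reconciling sign and absolute-value conventions to match the required form $c>0$ and $\lambda\in(-1,1)$. For every even subsequence this is automatic because $\lambda^{2k}=|\lambda|^{2k}$. The one spot needing a remark is the odd subsequence when $\beta=1$, where the leading eigenvalue $\lambda_o$ from Lemma~\ref{lem: betty the beta=1 theorem} may be negative; there I would note that since $|W_{2k+1}-\lim_{k\to\infty}W_{2k+1}|\ge 0$, the product $c_o\lambda_o^{2k+1}$ is eventually positive, so $|c_o\lambda_o^{2k+1}|=|c_o|\,|\lambda_o|^{2k+1}$ and hence $|W_{2k+1}-\lim_{k\to\infty}W_{2k+1}|\sim |c_o|\,|\lambda_o|^{2k+1}$ with $|c_o|>0$ and $\lambda_o\in(-1,1)$, exactly as stated. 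I would also confirm $|\lambda_e|,|\lambda_o|<1$ in the $\beta=1$ case, which follows because the even and odd limits of $W_k$ are finite, forcing the fluctuation term to decay.
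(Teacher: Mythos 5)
Your proposal is correct and follows essentially the same route as the paper's own proof: a case analysis over the four categories from Corollary~\ref{cor: four guvab nations}, citing Theorem~\ref{thm:winnie the when it's constant for W = 1 fairy}, Theorem~\ref{thm: washington the w=1/2 convergence theorem}, Theorem~\ref{thm: Simba the sim-an-exponential when W=0 fairy} with Remark~\ref{rem: odd simba}, and Lemma~\ref{lem: betty the beta=1 theorem} in the respective cases. Your extra remarks reconciling the sign and absolute-value conventions in the $\beta=1$ odd case are a small refinement the paper leaves implicit, but they do not change the argument.
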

\begin{proof}
To begin, note that when $\beta<1$, we have that $W_k$ converges and $W\in\{1,\frac{1}{2},0\}$ by Corollary \ref{cor: four guvab nations}. Further, when $\beta=1$, Lemma \ref{lem: betty the beta=1 theorem} implies exactly that the desired result holds. Thus, it suffices to consider each of these cases $W=1$, $W=\frac{1}{2}$, and $W=0$ separately.

First, when $W=1,$ Theorem \ref{thm:winnie the when it's constant for W = 1 fairy} implies $\{W_k\}$ is eventually constant (and hence, we have the same for $\{W_{2k}\}$ and $\{W_{2k+1}\}$). This gives the desired result in the case $W=1$.

When $W=\frac{1}{2},$ Theorem \ref{thm: washington the w=1/2 convergence theorem} implies that either $\beta=\frac{1}{2}$ and $\{W_k\}$ is eventually constant or else $|W_{k} - \lim_{k\to\infty}W_{k}| \sim 0.5 \cdot |1-2\beta|^{2k}$ (and hence, we have the same for $\{W_{2k}\}$ and $\{W_{2k+1}\}$). This gives the desired result for $W=\frac{1}{2}.$

Finally, we note that when $W=0$, Theorem \ref{thm: Simba the sim-an-exponential when W=0 fairy} (and Remark \ref{rem: odd simba}) gives exactly the desired result. Thus, having checked each case, we conclude the proof.
\end{proof}

The second theorem provides a characterization of when $\{W_k\}$ is eventually constant when $\beta < 1$.
\begin{theorem} \label{thm: Characterization of Constancy}
When $\beta < 1$, we have that $\{W_k\}$ is eventually constant if and only if one of the following holds:
\begin{itemize}
    \item $\alpha = \beta = 0$, the graph $G$ is bipartite, and $\dist(u,v)$ is odd,
    \item $\alpha = 0$ and $\beta = \frac{1}{2}$, and $G$ is bipartite,
    \item $\alpha = \beta = 0$ and $N(u)=N(v)$,
    \item $\displaystyle \alpha = \beta = \frac{1}{\deg u + 1}$, the edge $\{u,v\}\in E(G)$, and if the edge $\{u,v\}$ were removed from $E(G)$ then $u,v$ would have $N(u)=N(v)$,
    \item $\alpha = \beta$ and $u = v$.
\end{itemize}
\end{theorem}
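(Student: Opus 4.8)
The plan is to reduce everything to the case analysis already carried out in Sections 4, 5, and 6 by invoking the four-way classification of Corollary \ref{cor: four guvab nations}. Since we assume $\beta < 1$, that corollary places every Guvab into exactly one of the three cases $W = 1$, $W = \frac{1}{2}$, or $W = 0$, each of which was treated separately earlier. I would handle the three cases in turn, extract the eventual-constancy criterion proved for each, and check that together they are exactly the list of five bullets.

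In the case $W = 1$ and $\beta < 1$, recall from the opening of Section 4 that these are precisely the Guvabs with $\alpha = \beta = 0$, $G$ bipartite, and $u, v$ on opposite sides. Theorem \ref{thm:winnie the when it's constant for W = 1 fairy} shows $\{W_k\}$ is always eventually constant here, so the whole case contributes; the only reformulation needed is the elementary fact that in a connected bipartite graph $u$ and $v$ lie on opposite sides if and only if $\dist(u,v)$ is odd, which rewrites this case as the first bullet. In the case $W = \frac{1}{2}$ and $\beta < 1$ (equivalently, $G$ bipartite with $0 = \alpha < \beta < 1$), Corollary \ref{cor: wendy the when it's constant for W = 1/2 fairy} states that $\{W_k\}$ is eventually constant if and only if $\beta = \frac{1}{2}$; combined with the case hypotheses this is exactly the second bullet. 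Finally, in the case $W = 0$ and $\beta < 1$, Theorem \ref{thm: Constance the constant distance when W = 0 fairy} already records its three necessary-and-sufficient conditions, which are verbatim the third, fourth, and fifth bullets.

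The iff then follows by reading the case split in both directions. For the forward implication, a Guvab with $\beta < 1$ whose $\{W_k\}$ is eventually constant lies in exactly one of the three cases, and within that case eventual constancy forces the associated bullet(s) by the cited results. For the reverse implication, each bullet lies in one case and satisfies that case's constancy condition, so $\{W_k\}$ is eventually constant. Because all the substantive work is inherited from the earlier sections, what remains is essentially bookkeeping, and that is where I expect the only genuine care to be required: I must verify the two nontrivial consistency points, namely that ``opposite sides'' is equivalent to ``$\dist(u,v)$ odd'' and that each of the last three bullets forces $W = 0$ (so that Theorem \ref{thm: Constance the constant distance when W = 0 fairy} is applicable), the latter following from Theorems \ref{thm: 0-convergence} and \ref{thm: convergence values}. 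I would also confirm that the second bullet, having $0 = \alpha < \beta < 1$ with $G$ bipartite, genuinely lands in the $W = \frac{1}{2}$ case rather than the $W = 0$ case, again by Theorem \ref{thm: convergence values}; note that the five bullets need not be mutually exclusive, since the theorem asserts an inclusive disjunction.
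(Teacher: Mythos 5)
Your proposal is correct and follows essentially the same route as the paper: split via Corollary \ref{cor: four guvab nations} into the cases $W=1$, $W=\frac{1}{2}$, $W=0$, and cite Theorem \ref{thm:winnie the when it's constant for W = 1 fairy}, Corollary \ref{cor: wendy the when it's constant for W = 1/2 fairy}, and Theorem \ref{thm: Constance the constant distance when W = 0 fairy} respectively, translating each case's hypotheses into the stated bullets via Theorems \ref{thm: 0-convergence} and \ref{thm: convergence values}. Your extra consistency checks (parity of $\dist(u,v)$ versus sides, and that the last three bullets land in the $W=0$ case) are exactly the bookkeeping the paper also performs.
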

\begin{proof}
To begin, note that when $\beta<1$, we have that $W_k$ converges and $W\in\{1,\frac{1}{2},0\}$ by Corollary \ref{cor: four guvab nations}. Thus, it suffices to consider each of these cases where $W=1$, $W=\frac{1}{2}$ and $W=0$ separately.

First, we look at the case where $W=1$. Note that, in this case, Theorem \ref{thm:winnie the when it's constant for W = 1 fairy} implies that $\{W_k\}$ is always eventually constant. Further, by Theorem \ref{thm: convergence values}, we see this case is equivalent to $\alpha=\beta=0$ and $W \neq 0$. Further, by Theorem \ref{thm: 0-convergence}, this case occurs exactly when $\alpha = \beta = 0$, the graph $G$ is bipartite, and $\dist(u,v)$ is odd (i.e., the first item of the theorem statement).

Next, when $W=\frac{1}{2}$, Corollary \ref{cor: wendy the when it's constant for W = 1/2 fairy} implies $\{W_k\}$ is eventually constant exactly when $\beta=\frac{1}{2}$. By Theorem \ref{thm: convergence values},  this case occurs exactly when $\alpha = 0$ and $\beta = \frac{1}{2}$, and $G$ is bipartite (i.e., the second item of the theorem statement).

Finally, when $W=0$, we see that Theorem \ref{thm: Constance the constant distance when W = 0 fairy} implies $\{W_k\}$ is eventually constant exactly when one of the following holds:
\begin{itemize}
    \item $\alpha = \beta = 0$ and $N(u)=N(v)$,
    \item $\displaystyle \alpha = \beta = \frac{1}{\deg u + 1}$, the edge $\{u,v\}\in E(G)$, and if the edge $\{u,v\}$ were removed from $E(G)$ then $u,v$ would have $N(u)=N(v)$,
    \item $\alpha = \beta$ and $u = v$.
\end{itemize}
Note that, each of these cases is indeed a case where $W=0$ by Theorem \ref{thm: 0-convergence}, so this case is equivalent to the final three items of the theorem statement.

Thus, considering each of these cases together, we obtain the desired result.
\end{proof}


\section{Open Problems}
The theorems presented in this paper open up several new questions and directions for further research, which the reader is invited to consider. Specifically, given Theorem~\ref{thm: Guvab Convergence Theorem}, the remaining questions regarding the behavior of Guvabs can be broken into three main categories: 1) determining when $\{W_{2k}\}$ and $\{W_{2k+1}\}$ are eventually constant, 2) in cases $\{W_{2k}\}$ and $\{W_{2k+1}\}$ are eventually constant, determining how long they take to become constant, and 3) determining $c$ and $\lambda$ when $\{W_{2k}\}$ and $\{W_{2k+1}\}$ are not eventually constant. In this section, we break down what we have shown and what is left to be done regarding each of these questions.

By Theorem \ref{thm: Characterization of Constancy}, we have characterized the cases where $\{W_k\}$ is constant in all cases where $\beta < 1$. Furthermore, in the cases of $W = 1$ and $W = \frac{1}{2}$, we know that $\{W_{2k}\}$ is eventually constant if and only if $\{W_k\}$ is eventually constant, and similarly $\{W_{2k+1}\}$ is eventually constant if and only if $\{W_k\}$ is eventually constant. In the case of $W = 0$, it remains to characterize the cases where either $\{W_{2k}\}$ or $\{W_{2k+1}\}$ individually are eventually constant, but $\{W_k\}$ is not. Further, in the $\beta = 1$ case we lack a complete characterization of when $\{W_k\}$ is eventually constant.

Question $2)$ remains largely unanswered and is a promising direction for future work. The progress so far in this paper is restricted to fairly weak upper and lower bounds when $W = 1$, and characterizations of when $\{W_k\}$ is eventually constant when $W = 0$ and $\beta = 1$. One interesting problem is that of tighter bounds for the case where $W = 1$, and similar bounds for the case when $W = \frac{1}{2}$ and $W$ is eventually constant. Also, depending on the answers to Question 1, there may be Guvabs where only one of $\{W_{2k}\}$ and $\{W_{2k+1}\}$ is eventually constant. If we find a specific Guvab that satisfies these criteria, it will be interesting to determine how long this Guvab takes to have either $\{W_{2k}\}$ or $\{W_{2k+1}\}$ be eventually constant.

Answering question $3)$ will require specific knowledge of eigenvectors and eigenvalues. In full generality, this is difficult, so a potential direction for future work would be addressing it in specific examples.

\section{Acknowledgements}

We would like to thank our mentor, Pakawut Jiradilok, for providing us with important knowledge, guidance, and assistance throughout our project. We would also like to thank Supanat Kamtue for the problem idea and helpful thoughts and guidance. Finally, we would like to thank the PRIMES-USA program for making this project possible. 
\newpage
\bibliographystyle{alpha}
\bibliography{wryan_the_wryteup_fairy}
\end{document}